\numberwithin{equation}{section} \theoremstyle{plain}
\newtheorem{theorem}{Theorem}[section]
\newtheorem{proposition}[theorem]{Proposition}
\newtheorem{lemma}[theorem]{Lemma}
\newtheorem{corollary}[theorem]{Corollary}
\newtheorem{definition}[theorem]{Definition}
\newtheorem*{mainthm3-repeat}{Theorem \ref{mainthm3}}
\newtheorem*{sec3-key-prop-rpt}{Theorem \ref{sec3-key-prop}}
\newsavebox{\proofbox}
\savebox{\proofbox}{\begin{picture}(7,7)  \put(0,0){\framebox(7,7){}}\end{picture}}
\newcommand\QQ{\mathbb{Q}}
\newcommand\FF{\mathbb{F}}
\newcommand\Z{\mathbb{Z}}
\newcommand\R{\mathbb{R}}
\newcommand\C{\mathbb{C}}
\newcommand\N{\mathbb{N}}
\newcommand\A{\mathbb{A}}
\newcommand\G{\mathbf{G}}
\newcommand\m{\mathfrak{m}}
\newcommand\cha{\operatorname{char}\,}
\newcommand\Aut{\operatorname{Aut}}
\newcommand\SL{\operatorname{SL}}
\newcommand\PSL{\operatorname{PSL}}
\newcommand\ASL{\operatorname{ASL}}
\newcommand\SO{\operatorname{SO}}
\newcommand\Br{\operatorname{Br}}
\newcommand\GL{\operatorname{GL}}
\newcommand\PGL{\operatorname{PGL}}
\newcommand\Rad{\operatorname{Rad}}
\renewcommand\ss{\operatorname{cr}}
\newcommand\rk{\operatorname{rk}}
\newcommand\Gal{\operatorname{Gal}}
\newcommand\Spec{\operatorname{Spec}\,}
\newcommand\Hom{\operatorname{Hom}}
\newcommand\Sp{\operatorname{Sp}}
\newcommand\F{\mathbb{F}}
\newcommand\Q{\mathbb{Q}}
\newcommand\cF{\mathcal{F}}
\def\m{\mathfrak{m}}
\begin{document}

\title[Strongly dense free subgroups II]{Strongly dense free subgroups of semisimple algebraic groups  II}
\author[Breuillard]{Emmanuel Breuillard}
\address{Mathematical Institute\\
University of Oxford\\
Woodstock Rd\\
Oxford OX2 6GG\\
United Kingdom}

\email{breuillard@maths.ox.ac.uk}

\author[Guralnick]{Robert Guralnick}
\address{Department of Mathematics \\
University of Southern California\\
3620 Vermont Avenue\\
Los Angeles, California 90089-2532}
\email{guralnic@usc.edu}

\author{Michael Larsen}
\address{Department of Mathematics,
Indiana University,
Bloomington, IN
U.S.A. 47405}
\email{mjlarsen@indiana.edu}

\thanks{%Emmanuel Breuillard  was supported in part by the ERC starting grant 208091-GADA.
Robert Guralnick was partially supported by the
NSF grant DMS-1901595 and Simons Foundation Fellowship 609771.
Michael Larsen was partially supported by the NSF grant DMS-2001349
and Simons Foundation Fellowship 917214
and would also like to acknowledge the hospitality of the IAS while some of the work on this paper was being done.}

\subjclass{20G40, 20N99}

\begin{abstract}
It was shown in \cite{BGGT} 
  that there exist \emph{strongly dense} free subgroups in any semisimple 
  algebraic group over a large enough field. These are nonabelian free subgroups 
  all of whose subgroups are either cyclic or Zariski-dense.  Here we show that the 
  same is true for as long as the transcendence degree of the field is at least 
  $1$ in characteristic $0$ and at least $2$ in positive characteristic. We also consider related questions for surface groups.
\end{abstract}

\maketitle

\setcounter{tocdepth}{1}
\tableofcontents

\section{Introduction}\label{sec1}

There is a long history regarding free subgroups of linear groups, in which J. Tits played a prominent role with his celebrated alternative \cite{tits72}: every finitely generated linear group either contains a non-abelian free subgroup or a solvable subgroup of finite index.     We consider here a somewhat stronger property.   If $G$ is a semisimple
algebraic group over a field $K$, then a \emph{strongly dense free subgroup} of $G$ over $K$ is a nonabelian free subgroup $\Gamma$ of $G(K)$ such
that every nonabelian subgroup $\Delta$ of $\Gamma$ is Zariski-dense over $K$ in $G$ (meaning that there is no proper closed $K$-subgroup $H$ over $G$ such that $\Delta\subset H(K)\subset G(K)$.) 

%See the  introduction of \cite{BGGT} for some remarks,  references and applications.

It was shown in \cite{BGGT, BGGT2}  that if  $G$ is a semisimple algebraic group over an 
algebraically closed field  $K$ of sufficiently large transcendence
degree over the prime field  (the degree depending upon $\dim G$), 
then $G$  always contains a  strongly dense  subgroup.   We gave applications to 
the Banach-Hausdorff-Tarski paradox and to generation properties for finite simple
groups of Lie type of bounded rank and also used this to study Cayley graph expanders
associated to finite simple groups of Lie type.   

In this note, we show the 
existence of such subgroups over much smaller fields.  
In particular, we prove:

\begin{theorem}     \label{char 0}  Let $K$ be a transcendental extension of $\Q$ and $G$ a split semisimple algebraic group over $K$.   
Then $G$ contains a strongly dense free subgroup over $K$. In fact, the set of pairs $(a,b) \in G(K)\times G(K)$ generating a strongly dense free subgroup is Zariski-dense in $G\times G$.
\end{theorem}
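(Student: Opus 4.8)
Since $G$ is split it is the base change to $K$ of the corresponding split semisimple Chevalley group over $\Q$, which we continue to call $G$; it therefore suffices to produce one pair $(a,b)\in G(K)\times G(K)$ generating a strongly dense free subgroup and to check that such pairs are Zariski-dense in $G\times G$. The plan is to combine the structural input of the later sections (Theorem~\ref{sec3-key-prop}), which reduces strong density to a \emph{single} Zariski-open condition once freeness of $\langle a,b\rangle$ is known, with an explicit one-parameter construction that yields a free pair over a field with only one transcendental. Concretely, I would use that there is a proper closed subvariety $Z\subsetneq G\times G$ over $\Q$ such that for every field $F\supseteq\Q$ and every $(a,b)\in(G\times G)(F)$ with $(a,b)\notin Z$ for which $\langle a,b\rangle$ is free of rank $2$, the group $\langle a,b\rangle$ is strongly dense over $F$; after enlarging $Z$ we may also assume $(a,b)\notin Z$ forces $a,b$ to be regular semisimple. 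Granting this, the whole problem is to exhibit, over the given $K$, a pair off $Z$ generating a free group.

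Fix a root $\alpha$ of $G$, let $u_{\pm\alpha}$ be isomorphisms of the additive group onto the root subgroups $U_{\pm\alpha}$, and for $(a_0,b_0)\in G(\Q)\times G(\Q)$ consider
\[
\Phi_{a_0,b_0}\colon\A^1_\Q\longrightarrow G\times G,\qquad t\longmapsto\bigl(a_0\,u_\alpha(t),\ b_0\,u_{-\alpha}(t)\bigr),
\]
with (rational, geometrically irreducible) image curve $C_{a_0,b_0}$. Since $\Phi_{a_0,b_0}(0)=(a_0,b_0)$, for $(a_0,b_0)\notin Z$ we have $C_{a_0,b_0}\not\subseteq Z$, so $\Phi_{a_0,b_0}^{-1}(Z)$ is a finite set of algebraic points of $\A^1$. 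The function field $\Q(C_{a_0,b_0})$ is a subfield of $\Q(t)$ of transcendence degree $1$, hence by L\"uroth a rational function field $\Q(u)$, which embeds into $K$ since $K/\Q$ is transcendental; composing the generic point of $C_{a_0,b_0}$ with such an embedding --- equivalently, specialising $t$ to some $t_\ast\in K$ transcendental over $\Q$ --- gives a pair $(a,b)=(a_0u_\alpha(t_\ast),b_0u_{-\alpha}(t_\ast))\in(G\times G)(K)$, and $(a,b)\notin Z$ because $t_\ast\notin\Phi_{a_0,b_0}^{-1}(Z)$.

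It remains to make $\langle a,b\rangle$ free of rank $2$, and it suffices to prove this for the generic point of $C_{a_0,b_0}$, i.e.\ for $\bigl(a_0u_\alpha(t),b_0u_{-\alpha}(t)\bigr)$ over $\Q(t)$. I would argue over the local field $\Q(\!(t^{-1})\!)$ (valuation at $t=\infty$): as $t\to\infty$ the elements $a_0u_\alpha(t)$ and $b_0u_{-\alpha}(t)$ become highly proximal --- for generic $a_0$, $a_0u_\alpha(t)$ is regular semisimple with a unique eigenvalue of maximal absolute value, whose attracting and repelling flags converge as $t\to\infty$ to flags depending only on $a_0$, and likewise for $b_0u_{-\alpha}(t)$. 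For $(a_0,b_0)$ outside a proper closed subvariety $\Sigma\supseteq Z$ these two pairs of flags are in general position, so a Tits-type ping-pong argument applies with exponent $1$ (equivalently, the two elements act as hyperbolic isometries of the Bruhat--Tits building of $G$ over $\Q(\!(t^{-1})\!)$ with disjoint axes), whence $\langle a_0u_\alpha(t),b_0u_{-\alpha}(t)\rangle$ is free of rank $2$, and so is $\langle a,b\rangle$ for every transcendental $t_\ast$. Thus every $(a_0,b_0)\in(G\times G)(\Q)\setminus\Sigma$ and every transcendental $t_\ast\in K$ yield a strongly dense free subgroup. For Zariski-density, the good pairs contain the image of $\bigl((G\times G)(\Q)\setminus\Sigma\bigr)\times\{t_\ast\in K:\ t_\ast\text{ transcendental over }\Q\}$ under the total morphism $\Phi\colon(G\times G)\times\A^1\to G\times G$, which is dominant (indeed $\Phi(\cdot,0)=\mathrm{id}$); since $G$ is $\Q$-rational the set $(G\times G)(\Q)\setminus\Sigma$ is Zariski-dense in $(G\times G)_K$, the transcendental elements of $K$ are Zariski-dense in $\A^1_K$, a product of Zariski-dense sets is Zariski-dense, and the image of a Zariski-dense set under a dominant morphism is Zariski-dense; hence the good pairs are Zariski-dense in $G\times G$.

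The crux is the freeness step. Over a field of transcendence degree $1$ one cannot obtain a free pair by a bare genericity argument, since killing all nontrivial word relations would require a $\Q$-point avoiding a \emph{countable} union of proper subvarieties, which need not exist; the role of the one-parameter family is that a single explicit $\Q$-rational curve, with prescribed behaviour at $t=\infty$, carries a free pair as its generic point for dynamical (ping-pong) reasons rather than cardinality ones, and the lone transcendental needed is exactly what $\Q(C_{a_0,b_0})\cong\Q(u)$ supplies. The remaining delicacy is keeping this compatible with the subvariety $Z$ governing strong density, which is why one twists $u_\alpha(t)$ by a generic $a_0$ --- untwisted, the pair $\bigl(u_\alpha(t),u_{-\alpha}(t)\bigr)$ generates only an $\SL_2$-type subgroup and is far from generic. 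In positive characteristic $u_\alpha(t)$ has order $p$, so this mechanism collapses and one must instead build the family from regular semisimple elements, at the cost of a second transcendental --- the source of the stronger hypothesis there.
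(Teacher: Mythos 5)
The decisive step in your argument is the very first one: the assertion that there is a \emph{single} proper closed subvariety $Z\subsetneq G\times G$ over $\Q$ such that for any field $F\supseteq\Q$ and any pair $(a,b)\notin Z$ generating a free group, $\langle a,b\rangle$ is automatically strongly dense. Nothing in the paper provides this (the result you cite as ``Theorem sec3-key-prop'' does not exist in the text), and it is not a known statement. Strong density is a countable family of conditions, one for each pair of words $w_1,w_2$ generating a nonabelian subgroup of $F_2$: the pairs $(a,b)$ for which $\langle w_1(a,b),w_2(a,b)\rangle$ fails to be dense form, for each such word pair, a countable union of proper closed subvarieties (this is exactly Lemma~\ref{HH}), and there is no reason --- and no result in this paper or in \cite{BGGT} --- that all of these collapse into one closed condition, even after restricting to free pairs. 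Indeed, whether strongly dense pairs contain a dense \emph{open} set is precisely the kind of statement the paper avoids needing: its whole mechanism (strongly dense free pairs inside almost abelian groups $\SL_1(D_i)$ over a global field, a generating collection of special subgroups from Theorem~\ref{generating sets}, a curve through the corresponding points via Proposition~\ref{crt}, and the degeneration Theorem~\ref{degeneration} applied to the Zariski closure of \emph{each} nonabelian $\Delta\subset F_2$) exists exactly because at transcendence degree $1$ one cannot beat countably many bad subvarieties by a cardinality or single-open-set argument. You correctly diagnose this obstruction for freeness, but then assume it away for strong density, which is the genuinely hard half; with that assumption removed, your construction only produces a dense free pair, not a strongly dense one.

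A secondary, more repairable issue: your freeness step is asserted rather than proved. Over $\Q(\!(t^{-1})\!)$ the absolute value of $t$ is a fixed constant, so the contraction rates of $a_0u_\alpha(t)$ and $b_0u_{-\alpha}(t)$ are fixed, and a ping-pong with exponent $1$ does not follow just from transversality of attracting and repelling flags; one would have to verify proximality of these particular elements and quantify the contraction (or pass to powers, which you are not allowed to do if you want $a,b$ themselves free). By contrast, the paper gets freeness for free: its homomorphism $F_2\to G(A)$ specializes at one maximal ideal to an already injective $f_1$ built from the Tits alternative inside $\SL_1(D)$, so the generic representation is injective. I would encourage you to look at how the degeneration theorem substitutes for your hypothetical $Z$: it converts the finitely many specializations $f_i$ into constraints ($H_i\prec H$) on the closure $H$ of \emph{every} nonabelian $\Delta$ simultaneously, which is how the countable quantification over subgroups is handled with only one transcendental.
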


\begin{theorem} \label{char p}   Let $K$ be an extension of $\FF_p$ of transcendence degree $\ge 2$
and $G$ a split semisimple algebraic group over $K$.   
Then $G$ contains a strongly dense free subgroup over $K$. In fact, the set of pairs $(a,b) \in G(K)\times G(K)$ generating a strongly dense free subgroup is Zariski-dense in $G\times G$.
\end{theorem}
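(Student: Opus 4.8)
The plan is to recast the assertion ``the pair $(a,b)\in G(K)\times G(K)$ generates a strongly dense free subgroup'' as ``$(a,b)$ avoids a certain countable union of proper closed subvarieties of $G\times G$, all defined over the prime field $k_0$'' (where $k_0=\Q$ or $\F_p$), and then to use the transcendence hypothesis to exhibit a Zariski-dense set of $K$-points avoiding that union. Fix a split Chevalley model of $G$ over $k_0$. First I would carry out the group-theoretic reduction: subgroups of free groups are free, a free group is abelian exactly when cyclic, and every non-cyclic subgroup of a free group contains a non-cyclic $2$-generator subgroup; transporting this along an isomorphism $\langle a,b\rangle\cong F_2$ shows that, once $\langle a,b\rangle$ is free, strong density is equivalent to the statement that $\langle w_1(a,b),w_2(a,b)\rangle$ is Zariski dense in $G$ for every $(w_1,w_2)\in F_2\times F_2$ generating a non-cyclic subgroup of $F_2$. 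Since also $\langle a,b\rangle$ is free iff $w(a,b)\neq 1$ for all $1\neq w\in F_2$, the pair $(a,b)$ fails to generate a strongly dense free subgroup precisely when it lies in $\{(a,b):w(a,b)=1\}$ for some $w\neq 1$, or in $\phi_{w_1,w_2}^{-1}(\mathcal B)$ for some non-cyclic $(w_1,w_2)$, where $\phi_{w_1,w_2}\colon(a,b)\mapsto(w_1(a,b),w_2(a,b))$ and $\mathcal B\subseteq G\times G$ denotes the Zariski closure of the locus of pairs generating a non-Zariski-dense subgroup.

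Next I would record the geometry of these ``bad'' loci. Each $\{w(a,b)=1\}$ with $w\neq 1$ is a proper closed $k_0$-subvariety, since word maps on semisimple groups are dominant (Borel). The key structural input is that $\mathcal B$ is itself a proper closed $k_0$-subvariety: by Borel--Tits and the classification of maximal closed subgroups, $G$ has only finitely many $G(\overline{k_0})$-conjugacy classes of maximal proper closed subgroups $M$ (parabolic, reductive of maximal rank, or finitely many others), so $\mathcal B$ is a finite, $\Gal(\overline{k_0}/k_0)$-stable (hence $k_0$-rational) union of closures of images of the maps $G\times M\times M\to G\times G$, $(g,m_1,m_2)\mapsto(gm_1g^{-1},gm_2g^{-1})$, each of dimension at most $\dim G+\dim M<2\dim G$. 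I would also need, for each non-cyclic $(w_1,w_2)$, that some $(a,b)$ makes $\langle w_1(a,b),w_2(a,b)\rangle$ Zariski dense, so that $\phi_{w_1,w_2}^{-1}(\mathcal B)$ is again proper closed; this density property of pairs of word-values is the main structural lemma and the principal place the geometry of $G$ enters. The upshot: the set $\mathcal N\subseteq G\times G$ of non-strongly-dense-free pairs is a countable union of proper closed subvarieties, all defined over $k_0$.

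The heart of the matter is then to produce $K$-points outside $\mathcal N$, Zariski-densely. I would first reduce to $K=\Q(t)$ in characteristic $0$ and $K=\F_p(s,t)$ in characteristic $p$: freeness and Zariski density of $\langle a,b\rangle$ are preserved under extension of the base field, Zariski density of a set of $L$-points persists over any overfield of $L$, and by hypothesis $K$ contains a copy of $\Q(t)$ (resp.\ $\F_p(s,t)$). Over such a field the aim is the generic point of a well-chosen rational curve (resp.\ surface) in $G\times G$ over $k_0$: build an algebraic family $\psi_c\colon\A^1\to G\times G$ (resp.\ $\A^2\to G\times G$), $c$ ranging over an open subset of affine space $\mathcal P$ over $k_0$, from one-parameter unipotent subgroups, torus elements with generic coweights, and conjugations, with the evaluation map dominant onto $G\times G$, and arranged so that avoidance of \emph{all} the bad subvarieties of the previous step is forced by \emph{finitely many} genericity conditions on $c$ --- for instance by making $w_1(\psi_c(t))$ regular semisimple with multiplicatively independent root values for every $w_1$, so that $\overline{\langle w_1(\psi_c(t))\rangle}$ contains a maximal torus and the residual constraint on $w_2(\psi_c(t))$ forces density. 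For $c\in\mathcal P(k_0)$ outside those finitely many proper conditions, the pair $\psi_c(\eta)$ at the generic point $\eta$ lies in $(G\times G)(k_0(t))\setminus\mathcal N$ (resp.\ over $k_0(s,t)$); varying $c$ over $\mathcal P(k_0)$ and using $K\supseteq k_0(t)$ yields a Zariski-dense set of strongly-dense-free pairs in $(G\times G)(K)$.

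The main obstacle is exactly this last step. Over a countable field $k_0$ a countable union of proper subvarieties can cover all the rational points, so the real work is to design the family $\psi_c$, exploiting the structure of $G$, so that the infinitely many avoidance conditions imposed by $\mathcal N$ collapse to finitely many genericity conditions on $c$ that can be met because $\mathcal P$ is $k_0$-rational. This collapse is also where the characteristics diverge: in characteristic $p$ a single transcendental is too rigid to witness the required independences (inseparability obstructs the Bertini/irreducibility steps for the family, and $\F_p$-rational curves are too constrained), so one must pay for a second transcendental, after which the argument parallels characteristic $0$. Verifying the structure theory of maximal closed subgroups uniformly for all $G$, and establishing the ``forced genericity'' property of the explicit family $\psi_c$, are the technical cores of the proof.
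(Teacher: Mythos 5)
Your proposal takes a genuinely different route from the paper, and it has two concrete gaps, the first of which is fatal in exactly the characteristic-$p$ setting of Theorem \ref{char p}. You claim that the Zariski closure $\mathcal B$ of the locus of pairs generating a non-Zariski-dense subgroup is a \emph{proper} closed subvariety, deduced from finitely many conjugacy classes of maximal closed subgroups. In characteristic $p$ this is false: every pair in $G(\overline{\F}_p)\times G(\overline{\F}_p)$ generates a finite, hence non-dense, subgroup, and these pairs are Zariski dense in $G\times G$, so $\mathcal B=G\times G$ and all your preimages $\phi_{w_1,w_2}^{-1}(\mathcal B)$ are everything. (Finite subgroups are not contained in the finitely many classes of positive-dimensional maximal subgroups; this is precisely why the paper's Lemma \ref{HH} must take a \emph{countable} union, including the loci $X_m$ of pairs generating groups of order at most $m$.) Even after repairing this to a countable union of proper $\F_p$-subvarieties, the decisive step of your argument --- producing points of $(G\times G)(\F_p(s,t))$ outside that countable union by a family $\psi_c$ for which ``infinitely many avoidance conditions collapse to finitely many genericity conditions on $c$'' --- is only asserted, not carried out; over the countable field $\F_p(s,t)$ a countable union of proper subvarieties can perfectly well contain all rational points, so this collapse is the entire content of the theorem, and no mechanism for it is given (requiring $w_1(\psi_c(t))$ to be regular semisimple with multiplicatively independent eigenvalues ``for every $w_1$'' is again infinitely many conditions, and a maximal torus together with one further element need not generate a dense subgroup).

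The paper supplies that missing mechanism by an entirely different strategy, which deliberately avoids Borel's dominance theorem for word maps (on which your properness claims rely). It works over the global field $K=\F_p(s)$ with almost abelian forms $\SL_1(D_i)$, whose proper positive-dimensional subgroups lie in torus normalizers (Lemma \ref{normalizer}), so that products of pairwise non-isomorphic such groups contain strongly dense free subgroups over $K$ (Lemma \ref{A1}, Corollary \ref{special}); it then interpolates these homomorphisms along a curve $\Spec A$ with fraction field $K(t)=\F_p(s,t)$ (Proposition \ref{crt}), and uses the degeneration theorem proved via Bruhat--Tits theory (Theorem \ref{degeneration}) together with generating sets of special subgroups (Theorem \ref{generating sets}) to force the generic Zariski closure of every non-abelian subgroup to be all of $G$; Goursat's lemma and Theorem \ref{generating sets}(4), with division algebras ramified at distinct places, handle the semisimple non-simple case. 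Finally, the Zariski-density of strongly-dense-free pairs is deduced in the paper essentially for free from existence (for a strongly dense free $F$, the non-commuting pairs in $F\times F$ are already dense in $G\times G$), so no parametrized family is needed for that part either.
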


Letting $L = K(t)$ with $K$ a global field, we produce $d$-generated free subgroups $\Gamma$ of $G(A)$, where $A$ is a finitely generated $K$-subalgebra of $L$, so
that if we reduce modulo certain maximal ideals $\m_i$ of $A$, 
we get strongly dense free subgroups of 
the $K_i$-points of predetermined special subgroups $H_i$ of $G$, where $K_i = A/\m_i$.  These $H_i$  are 
images in $G$ of products of groups of the form $\SL_1(D_{i,j})$, where each $D_{i,j}$ is a central division algebra of prime degree over $K_i$.  These $\SL_1(D_{i,j})$ are special 
because all of their proper closed subgroups over $K_i$ are close to being abelian, and it is this property that enables us to prove strong density.
Then using some algebraic geometry and algebraic
group theory, we see that $\Gamma$, regarded as a subgroup of $G(L)$, must  be free and strongly dense in
$G$.    The strategy is not so dissimilar from that of  \cite{BGGT} where  degenerations of certain maximal rank subgroups of $G$ were studied.  However, a key
feature of the proof in \cite{BGGT} was the use of the  result of Borel \cite{borel-dom}  that word maps on algebraic groups are dominant
(see also \cite{larsen}).   This is not used  in our proof of the existence of free strongly dense subgroups, which uses the properties of special subgroups instead.
A major advantage of the new strategy is that it lets us work over fields of lower transcendence degree.

A key technical ingredient, which may be of independent interest, is Theorem~\ref{degeneration}, relating the Zariski closure of the generic image of a family of representations of a group $\Gamma$
to the Zariski closure of a member of the family.

We can also prove strong density results for certain non-free groups.

\begin{definition}
\label{strongly dense}
A nonabelian subgroup $\Gamma$ of a semisimple algebraic
group $G$ over $K$ is \emph{strongly dense} (over $K$) if every nonabelian subgroup of $\Gamma$ is Zariski-dense in $G$ over $K$.
\end{definition}

As an illustration of the definition, we prove that in a number of situations, $G(K)$ contains strongly dense surface groups.

\begin{theorem} \label{thm:surface}   Let $\Sigma_g, g \ge 2$ be the fundamental group of a compact orientable surface
of genus $g$ and $G$ a semisimple algebraic group over an algebraically closed field $K$.  Then there are strongly dense embeddings of $\Sigma_g$ in $G$ over $K$ if any of the following conditions holds:
\begin{enumerate}
\item[1.]$K$ has infinite transcendence degree over its prime field.
\item[2.]$K$ is transcendental over $\Q$ and $G$ is a classical group.
\item[3.]$K$ is of characteristic $p\ge 3$ and is of transcendence degree $\ge 10$ over $\F_p$, and $G$ is a special linear group.
\end{enumerate}
\end{theorem}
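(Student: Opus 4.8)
The plan is to reduce to genus $2$ and then to realize $\Sigma_2$ as an amalgam of two free groups inside $G$, obtaining the second free factor by solving the surface relation. For every $g\ge 2$, $\Sigma_g$ is isomorphic to a subgroup of index $g-1$ in $\Sigma_2$: it is the fundamental group of the connected $(g-1)$-fold covering of the closed orientable surface of genus $2$ determined by a surjection $\Sigma_2\to\Z/(g-1)\Z$, and that covering has genus $g$ by multiplicativity of the Euler characteristic. Since a nonabelian subgroup of a strongly dense subgroup of $G$ is again strongly dense, the restriction to $\Sigma_g$ of a strongly dense embedding of $\Sigma_2$ is a strongly dense embedding of $\Sigma_g$; so from now on $g=2$, and one uses the decomposition $\Sigma_2=\langle a,b\rangle *_{\langle [a,b]\rangle}\langle c,d\rangle$ as an amalgam of two rank-$2$ free groups over the infinite cyclic subgroup generated by $[a,b]$, identified with the inverse of $[c,d]$.

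The representations $\rho\colon\Sigma_2\to G(K)$ are built in two stages. First one chooses $\rho(a),\rho(b)\in G(K)$ freely generating a strongly dense subgroup of $G$ — such pairs exist by Theorems~\ref{char 0} and~\ref{char p} — and, using that the set of such pairs is Zariski-dense in $G\times G$ while the commutator map $G\times G\to G$ is dominant (so that the image of that set is dense in $G$), one arranges in addition that $z:=[\rho(a),\rho(b)]$ is regular semisimple; it then automatically has infinite order. It remains to solve $[\rho(c),\rho(d)]=z^{-1}$, i.e.\ to conjugate $\rho(c)$ to $\rho(c)z^{-1}$. The solution variety $\mathcal S:=\{(x,y)\in G\times G:[x,y]=z^{-1}\}$ is nonempty — it contains $(\rho(b),\rho(a))$ — and has dimension $\dim G$, and one takes $(\rho(c),\rho(d))$ to be a sufficiently generic point of an irreducible component of $\mathcal S$.

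Two properties of such a $\rho$ must then be verified. Injectivity should follow from a ping-pong argument for the amalgam $\langle a,b\rangle *_{\langle [a,b]\rangle}\langle c,d\rangle$, carried out in the usual way after embedding the finitely generated group $\langle\rho(a),\rho(b),\rho(c),\rho(d)\rangle$ into a group over a local field in which $z$ acts with attracting and repelling behaviour, the necessary general-position hypotheses holding on a nonempty open subset of $\mathcal S$. For strong density one uses that a nonabelian group contains a nonabelian $2$-generated subgroup, so that it suffices to have $\overline{\langle\rho(u),\rho(v)\rangle}=G$ for every non-commuting pair $u,v\in\Sigma_2$; this is automatic when $u,v\in\langle a,b\rangle$ since $\langle\rho(a),\rho(b)\rangle$ is strongly dense, and for the remaining (countably many) pairs one must show that the corresponding ``bad'' locus in $\mathcal S$ is a proper closed subvariety, after which a generic point of $\mathcal S$ avoids all of them.

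In case~1 this last step costs nothing: since $K$ has infinite transcendence degree, it suffices to exhibit, for each individual constraint, one representation lying on a fixed irreducible component of $\Hom(\Sigma_2,G)$ and satisfying it — the density constraints being handled by the free-group theorems and injectivity by the ping-pong construction — and then to pass to the generic point of that component, whose finitely generated field of definition embeds into $K$. Cases~2 and~3 are genuinely harder because the transcendence degree is small, and there Theorem~\ref{degeneration} plays the role it plays in the proof of the free-group theorems: imitating that proof, one works over $L=K_0(t)$ with $K_0$ a global field, realizes $\rho$ — now subject to $[\rho(a),\rho(b)][\rho(c),\rho(d)]=1$ — inside $G(A)$ for a finitely generated $K_0$-subalgebra $A$ of $L$, and arranges that the reductions of $\rho$ modulo suitably chosen maximal ideals $\m_i$ are strongly dense in special subgroups of the form $\SL_1(D_{i,j})$; Theorem~\ref{degeneration} then relates the Zariski closure over $L$ of each $\langle\rho(u),\rho(v)\rangle$ to those of its reductions and forces it to be all of $G$ once enough reductions into enough different special subgroups are strongly dense, injectivity again coming from ping-pong. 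I expect the main obstacle to be precisely this: the surface relation sharply reduces the freedom available in choosing $\rho(a),\dots,\rho(d)$ while one must still control every reduction simultaneously, and it is presumably only for $G$ classical in characteristic $0$ and $G$ special linear in characteristic $p\ge 3$ that this can be made to work within transcendence degree $1$, respectively $10$.
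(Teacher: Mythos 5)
Your reduction to genus $2$ and the general strategy of taking a representation that avoids countably many proper closed loci are sound and close in spirit to the paper's Theorem~\ref{frf}, but as written the proposal does not prove any of the three cases, and cases 2 and 3 are not addressed at all: the final paragraph explicitly stops at the observation that the surface relation is the obstacle and offers only the guess that the free-group degeneration argument over a global field can be imitated. That guess does not match what is actually needed. The paper's mechanism for the small-transcendence-degree cases (Theorem~\ref{trans.deg.one} and the last theorem of Section~8) is different: one first produces a strongly dense \emph{injective} representation of $\Sigma_2$ into $\SL_2$ over a small field (over $\bar\Q$ via Fuchsian uniformization, lifting to $\SL_2(\R)$ and Weil's openness, as in Corollary~\ref{surface char 0}; in characteristic $p$ via the generic point of the irreducible $9$-dimensional variety $\Hom(\Sigma_2,\SL_2)$, with injectivity from residual freeness and density from the structure of proper subgroups of $\SL_2$), then transports it into the members of a \emph{very special} generating set of $G$ (Theorem~\ref{generating sets}(2),(3) -- this is exactly where the hypotheses ``classical'' and ``special linear, $p\ge 3$'' enter), then runs an irreducible (Bertini) curve through these finitely many points of the irreducible variety $\Hom(\Sigma_2,G)$ and applies Theorem~\ref{degeneration} to the generic representation on that curve; this is where the bounds $1$ and $10=9+1$ come from. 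Your sketched imitation would instead require an analogue of Proposition~\ref{crt} for $\Hom(\Sigma_2,G)$, i.e.\ interpolating surface-group representations with prescribed reductions into groups $\SL_1(D_{i,j})$, which is precisely what is unavailable ($\Hom(\Sigma_2,G)$ is not known to be rational, as the paper itself remarks), and if it did work it would predict transcendence degree $1$ or $2$ in characteristic $p$, not $10$, so it cannot be the route to the stated theorem.

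Case 1 also has genuine gaps in your construction. Injectivity is delegated to a ping-pong argument for the amalgam $\langle a,b\rangle *_{\langle[a,b]\rangle}\langle c,d\rangle$ over a local field that is never carried out; the claim that ``the necessary general-position hypotheses hold on a nonempty open subset of $\mathcal{S}$'' is an assertion, not a proof, and in the generic-point formulation it is also unnecessary: residual freeness of $\Sigma_2$ already supplies, for each $\gamma\neq 1$, a representation with $\rho(\gamma)\neq 1$, which is all that argument needs (this is how the paper handles it). For strong density you must show, for every non-commuting pair $(u,v)$ and for each of countably many closed conditions (note that ``not generating a Zariski-dense subgroup'' is \emph{not} a single closed condition in positive characteristic; one needs the countable family of Lemma~\ref{HH}), that the bad locus is proper in the irreducible component of $\mathcal{S}$ (or of $\Hom(\Sigma_2,G)$) on which you work. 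But because you have frozen $\rho(a),\rho(b)$ with $[\rho(a),\rho(b)]=z\neq 1$, the standard witnesses obtained by retracting $\Sigma_2$ onto a free group are not available (they satisfy $[\rho(a),\rho(b)]=1$ and so do not lie in $\mathcal{S}$), and the one explicit point of $\mathcal{S}$ you exhibit, $(\rho(c),\rho(d))=(\rho(b),\rho(a))$, fails for mixed pairs such as $(a,d)$, whose images commute there; no witnesses are produced for such pairs, nor is it shown that $\mathcal{S}$ is irreducible or that all witnesses lie on a common component (the paper instead invokes the known irreducibility of $\Hom(\Sigma_g,G)$, due to Simpson and Liebeck--Shalev, and works with the whole representation variety). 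Until these witnesses are supplied and the ping-pong (or a substitute such as residual freeness) is made precise, even case 1 is incomplete.
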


We also prove the existence of dense (or strongly dense) embeddings of other finitely generated groups $\Gamma$ in $G(K)$ under certain assumptions on $\Gamma$, $G$ and $K$. See Theorems  \ref{frf} and \ref{trans.deg.one}.

\section{Representation Varieties}

Throughout this section, we fix a linear algebraic group  $G = \Spec B$ 
over a field $K_0$.  
If $\Gamma$ is a $d$-generated group, we define
$\cF_\Gamma$ to be the functor from $K_0$-algebras $A$ to sets given by
$$A\mapsto \Hom(\Gamma,G(A)).$$

When $\Gamma$ is the free group $F_d$, the functor $\cF_\Gamma$ is represented by the affine $K_0$-scheme $G^d = \Spec B^{\otimes d}$, and evaluation at any element $\gamma$ in $\Gamma$ defines a morphism $e_\gamma$ of $K_0$-schemes $G^d\to G$. The fiber of $e_\gamma$  over the identity of $G$ defines a closed subscheme of $G^d$ associated to an ideal $I_\gamma$ in the coordinate ring $B^{\otimes d}$.  If $\Gamma = F_d/N_\Gamma$, we define
$$I_\Gamma = \sum_{\gamma\in N_\Gamma} I_\gamma.$$

We define $C_\Gamma = B^{\otimes d}/I_\Gamma$, so $\Spec C_\Gamma$ is a closed subscheme of $G^d$.
For every $K_0$-algebra $A$, an element of $\cF_\Gamma(A)$ corresponds to a homomorphism 
$\Hom(\Gamma,G(A))$, i.e., a homomorphism $F_d\to G(A)$ which restricts to the identity at each $\gamma\in N_\Gamma$.  Equivalently, it is an element of $G^d(A)$ which maps by $e_\gamma$ to the identity, i.e., an element of $\Hom_{K_0}(\Spec A, G)$ whose composition with $e_\gamma$ is the morphism $\Spec A\to G$ which factors through the identity map $\Spec K\to G$.  This means that $\Spec A\to G^d$ factors through $\Spec B^{\otimes d}/I_\gamma$ for all $\gamma\in I_\Gamma$, or, equivalently, a morphism $\Spec A\to \Spec C_\Gamma$.  Thus $\cF_\Gamma$ is representable by the affine scheme $\Spec C_\Gamma$, which we call the \emph{representation scheme}.

Let $A_\Gamma$ denote the quotient of $C_\Gamma$ by its nilradical.  If we restrict to $K_0$-algebras $A$ with radical $(0)$, we have
$$\Hom_{K_0}(C_\Gamma,A) = \Hom_{K_0}(A_\Gamma,A).$$
We define $\Spec A_\Gamma$ to be the \emph{representation variety} of $\Gamma$ (where $G$ and $K_0$ are understood).  In particular, the identity map on $A_\Gamma$ defines an element of $\cF_\Gamma(A_\Gamma)$, which we call the \emph{universal representation}.

This representation variety is irreducible if and only if $A_\Gamma$ is an integral domain.
If $K$ is any field extension of $A_\Gamma$, the universal representation defines a homomorphism $\Gamma\to G(K)$ which corresponds to a map from $\Spec K$ to the representation variety whose image is the generic point.  Various conditions on $\Gamma$-representations in $G$ determine closed subsets of the representation variety defined over $K_0$; unless such a condition holds for all representations, it does not hold for $\Gamma\to G(K)$ of the kind we are discussing.  Therefore, if $K$ is any algebraically closed field of transcendence degree $\ge \dim \Spec A_\Gamma$ over $K_0$, there exists a homomorphism $\Gamma\to G(K)$ satisfying no proper closed condition.  This can be useful, for example, in proving that there exists an injective homomorphism from $\Gamma$ to $G(K)$.

\section{Some Groups over Global Fields} \label{sec2} 
\label{sec:reduce} 

We first point out the following easy result (see also \cite[Prop. 4.1]{GG}).  

\begin{lemma}
\label{normalizer}
Let $K$ be any field, $p$ a prime,  $D$ a central division algebra of degree $p$ over $K$, and 
$G = \SL_1(D)$ the inner form of $\SL_n$ over $K$ associated to $D$.  Then every proper $K$-subgroup of positive 
dimension of $G$ is contained in the normalizer of a maximal torus of $G$.
\end{lemma}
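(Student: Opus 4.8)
The plan is to exploit the fact that $G = \SL_1(D)$ with $D$ of prime degree $p$ becomes, after base change to an algebraic closure $\bar K$, isomorphic to $\SL_p$, so that any $K$-subgroup $H \le G$ of positive dimension gives a positive-dimensional $\bar K$-subgroup $\bar H \le \SL_p$. The key arithmetic input is that $D$ being a division algebra means $G(K)$ contains no nontrivial unipotent elements and no proper parabolic $K$-subgroups: indeed a proper parabolic would give a proper nonzero $D$-submodule of $D$ (viewed via its action), contradicting that $D$ is a division ring, or equivalently, the Tits index of $G$ over $K$ is anisotropic. So $G$ is $K$-anisotropic, hence every $K$-subgroup $H$ is reductive (a nontrivial unipotent radical would be normalized by a torus and would force nontrivial unipotents in $G(K)$, or one argues that $K$-anisotropy passes to the quotient and that $R_u(H)$ would be a $K$-split unipotent subgroup).

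Next I would bound the possibilities for the reductive group $H$. Its connected component $H^\circ$ has a central torus $Z$ and a semisimple part $[H^\circ,H^\circ]$; over $\bar K$ this sits inside $\SL_p$. The crucial observation is a dimension/rank count: a proper connected reductive subgroup of $\SL_p$ with $p$ prime that is not contained in a maximal torus must act on the natural $p$-dimensional module either irreducibly or with a nontrivial decomposition into isotypic pieces. Because $p$ is prime, a nontrivial decomposition $\bar K^p = \bigoplus V_i^{\oplus m_i}$ with $\sum m_i \dim V_i = p$ forces, unless all $\dim V_i = 1$, that there is a single irreducible constituent of dimension $p$ — i.e., $H^\circ$ acts irreducibly on the natural module. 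In the reducible case all constituents are one-dimensional, so $H^\circ$ is contained in a maximal torus (a Levi of a Borel), but then $H$ normalizes the torus $T := H^\circ$ itself; if $H^\circ$ is contained in but not equal to a maximal torus, $H^\circ$ still lies in the centralizer of that torus and one checks $H$ normalizes a maximal torus. The remaining case is $H^\circ$ acting irreducibly on $\bar K^p$.

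In the irreducible case I would argue that $H^\circ$ must be semisimple (the central torus acts by scalars, hence trivially in $\PGL$, and is forced to be at most $\mu_p \cap \SL_p$, which is finite — so $Z$ is trivial-dimensional and $H^\circ$ is semisimple) and then invoke that an irreducible semisimple subgroup of $\SL_p$ whose representation has prime dimension $p$ is very restricted. But the decisive point is descent: such an $H$, if it exists as a $K$-subgroup, would be an inner $K$-form sitting inside $G = \SL_1(D)$; its centralizer structure in $D$ would produce a proper subalgebra of $D$ containing $K$ of dimension $> 1$, and since $D$ has prime degree $p$ over its center $K$, the only subfields are $K$ and (possibly) maximal subfields of degree $p$, and the only subalgebra properly between is a maximal subfield $L$ with $[L:K]=p$. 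An irreducible $H^\circ$ of positive dimension would have commutant a proper $K$-subalgebra strictly larger than such an $L$ forced to be all of $D$ or to collapse — contradiction unless $H^\circ$ is itself a torus, namely $L^\times \cap G = $ a maximal torus. I expect the main obstacle to be making this last descent step clean: ruling out a positive-dimensional irreducible $K$-subgroup of $\SL_1(D)$ by showing its $\bar K$-points would generate a proper $\bar K$-subalgebra of $D\otimes \bar K = M_p(\bar K)$ that is $\Gal$-stable and hence descends to a proper $K$-subalgebra of $D$ of $K$-dimension strictly between $p$ and $p^2$, which is impossible since $\dim_K D = p^2$ and any subalgebra containing a maximal subfield and a non-scalar element not in it generates everything. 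Once $H^\circ$ is shown to be a (maximal) torus, $H \le N_G(H^\circ)$ and we are done; citing \cite[Prop.~4.1]{GG} for the routine verifications would streamline this.
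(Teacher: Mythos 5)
Your reduction to two cases is where the proof first breaks. For a proper positive-dimensional reductive subgroup of $\SL_p$ over $\bar K$, the primality of $p$ puts no constraint at all on the dimensions of the irreducible constituents of the natural module: any partition of $p$ can occur. For instance, inside $\SL_5$ an $\SL_2$ acting as the sum of its $2$- and $3$-dimensional irreducible representations is proper, positive-dimensional, neither contained in a torus nor irreducible on $\bar K^5$; an anisotropic $K$-form of such a subgroup inside $\SL_1(D)$ is exactly the kind of object the lemma must exclude, and your case analysis never sees it. (Ironically, the subalgebra idea has a chance precisely in this omitted case, since there the group generates a proper Galois-stable subalgebra of $M_p(\bar K)$ whose $K$-descent would be a subalgebra of $D$ with non-isomorphic simple factors over $\bar K$ --- but you do not make that argument.)

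The irreducible case, which you keep, is the one where your mechanism cannot work. If $H^{\circ}$ acts irreducibly on $\bar K^p$, then by Burnside its points generate all of $M_p(\bar K)$ and by Schur its commutant is the scalars; so no ``proper $K$-subalgebra of $D$ of dimension strictly between $p$ and $p^2$'' ever arises, nothing descends to an impossible subalgebra, and there is no contradiction to extract --- the step you yourself flag as the obstacle is not a gap to be smoothed but the entire content of the lemma (one must rule out, e.g., an anisotropic form of the principal $\SL_2$, or of $\SO_p$, acting irreducibly inside $\SL_1(D)$). The paper handles all non-toral cases at once by a different mechanism: if the derived group $S$ of a proper connected subgroup is nontrivial, a maximal $K$-torus of $S$ has rank $r<p-1$, so the image of Galois in $\GL_r(\Z)$ has no elements of order $p$ (the $p$-th cyclotomic polynomial has degree $p-1>r$) and the torus splits over an extension $L/K$ of degree prime to $p$; since $D$ has prime index $p$, $D_L$ remains a division algebra, yet the split torus would make $\SL_1(D_L)$ isotropic and hence contain nontrivial unipotents, which is impossible in a division algebra. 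Without an argument of this kind (or, alternatively, the observation that the character lattice of a norm-one maximal torus of $\SL_1(D)$ is an irreducible Galois module, so every nontrivial $K$-subtorus is already maximal), your proposal does not establish the lemma; the remaining issues (rationality of the maximal torus you normalize, the reduction of non-connected $H$ to $N_G(C_G(H^{\circ}))$) are minor by comparison but also left unfinished.
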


\begin{proof}
Let $H$ denote a proper connected subgroup of $G$.
As $G(K) \subset D^\times$, it contains no non-trivial unipotent elements, so the same must be true for $H$. 
 In particular, $H$ is reductive. 
If $S$ denotes the derived group of $H$, it is semisimple of dimension less than $p^2-1$ and therefore of rank $r<p-1$.
Let $T$ denote a maximal torus of $S$ defined over $K$.  Let $K^s$ denote a separable closure of $K$, and $X\cong \Z^r$ the character group of $T$.  Then $T$ determines a continuous homomorphism $\Gal(K^s/K)\to \Aut(X)\cong \GL_r(\Z)$ whose image $I$  is finite.  There are no elements of order $p$ in $I$ since the minimal polynomial of a primitive $p$th root of unity has degree $p-1>r$.  Thus $T$ splits over a Galois extension $L/K$ such that $\Gal(L/K)\cong I$ is of prime to $p$ order.   Therefore the class of $D$ does not lie in the kernel of 
$\Br(K)\to \Br(L)$, and $D_L := D\otimes_K L$ is a central division algebra over $K$.

Thus $\SL_1(D_L)$ is an algebraic group over $L$ containing a non-trivial split torus $T_L$, i.e., an isotropic semisimple group.  It follows that $\SL_1(D_L)$ contains non-trivial unipotents \cite[3.4 (iii)]{Springer}, which is absurd since $D_L$ has no non-trivial nilpotents.

\end{proof}

In fact over a global field, groups of type $A_m$ with $m+1$ prime are the only simple algebraic groups 
that have a form with the above property  (called almost abelian).  See \cite{GG} for more about this.

\begin{lemma}  
\label{A1}   
Let $K$ be a global field and $D_1,\ldots, D_m$ $K$-central division algebras of prime degrees $p_1,\ldots,p_m$ respectively such that $\SL_1(D_1),\ldots,\SL_1(D_m)$ are pairwise non-isomorphic.
Then
$$G(K)=G_1(K)\times \cdots \times G_m(K) = \SL_1(D_1)\times \cdots \times \SL_1(D_m)$$ 
contains a strongly dense free subgroup.
\end{lemma}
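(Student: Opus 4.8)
The plan is to build the free subgroup by a ping-pong argument adapted to the structure of each factor $G_i = \SL_1(D_i)$, exploiting the "almost abelian" property from Lemma~\ref{normalizer}: every proper positive-dimensional $K$-subgroup of $G_i$ lies in the normalizer of a maximal torus. First I would choose, for each $i$, a pair of semisimple elements $a_i, b_i \in G_i(K)$ in "general position": each lies in a distinct maximal $K$-torus $T_i, T_i'$, and these tori are chosen so that no proper subgroup of $G$ projecting onto a positive-dimensional subgroup of $G_i$ can contain too much of the data. Since $G_i(K)\subset D_i^\times$, the torus elements have infinite order, and by the density of $G_i(K)$ in $G_i$ (a global field has enough points, and $\SL_1(D_i)$ is $K$-unirational) such elements exist in abundance. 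To get freeness of the diagonally embedded $\langle a,b\rangle$ with $a = (a_1,\dots,a_m)$, $b=(b_1,\dots,b_m)$, I would either invoke a Tits-type ping-pong in one suitably chosen factor (after base change to a local completion $K_v$ where that $D_i$ splits or remains a division algebra, giving a proximal action on a Bruhat–Tits building or projective space), or appeal directly to the representation-variety machinery of Section~2: freeness is an open condition, so it suffices to exhibit one member of the family that is free, and then perturb.

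The heart of the argument is strong density: given a nonabelian subgroup $\Delta \le \Gamma = \langle a, b\rangle$, I must show $\overline{\Delta} = G$. Let $H = \overline{\Delta}^\circ$, a connected $K$-subgroup of $G = G_1\times\cdots\times G_m$. For each $i$, the projection $H_i$ of $H$ to $G_i$ is either all of $G_i$ or, by Lemma~\ref{normalizer}, trivial or contained in $N_{G_i}(T_i)$ for some maximal torus — in particular, in the latter case $H_i^\circ$ is a torus, so $H_i$ is "almost abelian." The key point is to rule out the second alternative. If $H_i$ is contained in a torus-normalizer, then $\Delta$ maps into $N_{G_i}(T_i)(K)$, an almost abelian group, and since $\Delta$ is nonabelian this forces the image in $G_i$ to be finite (an infinite subgroup of $N(T)(K)$ contained in no torus still has an abelian subgroup of index $2$, which would be nonabelian's obstruction only if the image were small) — more carefully, I would use that a nonabelian subgroup of $N_{G_i}(T_i)$ has Zariski closure of dimension $\ge 1$ only if it meets $T_i$ in an infinite group, and then the nonabelian hypothesis together with the structure of $\Delta$ inside the free group $\Gamma$ is what I exploit. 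The cleanest route: by Goursat's lemma applied to $\overline{\Delta}\le G_1\times\cdots\times G_m$, the closure is a fiber product of the $\overline{\Delta_i}$ over common quotients; if some $\overline{\Delta_i}\ne G_i$ then $\overline{\Delta_i}$ is almost abelian, and since the $\SL_1(D_i)$ are pairwise non-isomorphic (hence share no common simple quotients), Goursat forces $\overline{\Delta}$ to decompose, and $\Delta$ would have a nonabelian subgroup landing in an almost abelian group — contradiction once we know nonabelian subgroups of $\Gamma$ remain nonabelian under all projections, which follows because $\Gamma$ is free of rank $\ge 2$ and the projections are chosen injective.

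The main obstacle I anticipate is the base case — producing a single strongly dense free pair in $G(K)$ for the global field $K$, where the transcendence degree is $0$ and the reservoir of elements is thin. Over a global field one cannot simply take a "generic" element as in the representation-variety discussion, so I expect the real work to be: (i) choosing the semisimple generators in each factor so that the relevant tori are mutually non-conjugate and no two factors' data can be linked by a subgroup of a proper $H\le G$; and (ii) verifying freeness, for which I would pass to a place $v$ of $K$ where at least one $D_i$ stays a division algebra over $K_v$ (equivalently $G_i$ is $K_v$-anisotropic), so that $G_i(K_v)$ is compact and the generators can be taken to generate a free discrete subgroup by a Tits/ping-pong argument on the corresponding space, or alternatively a place where it splits and one gets proximal dynamics on $\P^{p_i-1}(K_v)$. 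Granting freeness and the Goursat-plus-almost-abelian dichotomy above, strong density then follows formally from Lemma~\ref{normalizer}.
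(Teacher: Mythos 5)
Your proposal is essentially the paper's argument: take a free pair in each factor, combine them diagonally so that every projection of a nonabelian subgroup $\Delta$ is injective and hence nonabelian free, reduce via Goursat's lemma and the pairwise non-isomorphism of the simple factors to density of each projection, and then use Lemma~\ref{normalizer} to conclude, since a nonabelian free group cannot lie in the virtually abelian group $N_{G_i}(T_i)(K)$. The only real difference is that the paper disposes of your anticipated ``base case'' in one line by invoking the Tits alternative to get $x_i,y_i\in\SL_1(D_i)$ generating $F_2$, so the elaborate choice of semisimple generators, places, and ping-pong dynamics is unnecessary (and your aside that ``freeness is an open condition, so perturb'' would not by itself work over a countable global field, though you do not rely on it).
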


\begin{proof}
By the Tits alternative, there exist elements $x_i, y_i\in \SL_1(D_i)$ which generate a free subgroup $F_2$.  Let $\Gamma = \langle (x_1,\ldots,x_m),(y_1,\ldots,y_m)\rangle$.  Since $x_1$ and $y_1$ satisfy no word relation, the same is true for
$(x_1,\ldots,x_m)$ and $(y_1,\ldots,y_m)$, so $\Gamma$ is isomorphic to $F_2$.  Let $\Delta\subset \Gamma$ be a non-abelian subgroup.  To show that $\Delta$ is Zariski-dense in $G$, by Goursat's lemma and the fact that the $G_i$ are simple as algebraic groups and pairwise non-isomorphic, it suffices to prove that the projection of $\Delta$ on each factor $G_i$ is Zariski-dense.  

Let $H_i$ denote the Zariski closure of the image of $\Delta$ in $G_i$.  Every proper subgroup of $G_i$ is contained in the normalizer of a maximal torus, so we may assume $H_i$ normalizes $T_i$, but as $\Delta$ is non-abelian, it is a free group on $\ge 2$ generators and therefore not contained in the normalizer of a torus.

\end{proof}

\begin{corollary}
\label{special}
With $K$ and $p_i$ as above, there exists a finite separable extension $L/K$ such that $\SL_{p_1}\times\cdots\times  \SL_{p_m}$ contains a strongly dense free subgroup over $L$.
\end{corollary}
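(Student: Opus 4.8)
The plan is to deduce the corollary from Lemma~\ref{A1} by a base-change argument: over a finite separable extension $L/K$, each $\SL_1(D_i)$ becomes an \emph{inner} form that is isomorphic over $L$ to the split group $\SL_{p_i}$, and strong density is preserved under extension of the base field. First I would choose $L/K$ to be a finite separable splitting field for all the division algebras $D_1,\ldots,D_m$ simultaneously; such an $L$ exists because each $D_i$ is split by a separable field extension of $K$ of degree $p_i$ (in fact by any maximal subfield), and one may take a separable closure of the compositum of these, or more economically a single separable extension containing isomorphic copies of all of them. Over such an $L$ we have $(\SL_1(D_i))_L \cong \SL_{p_i}$ as algebraic groups over $L$, hence $G_L \cong \SL_{p_1}\times\cdots\times\SL_{p_m}$ over $L$.

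Next I would invoke Lemma~\ref{A1} to obtain a strongly dense free subgroup $\Gamma \le G(K) \subset G(L)$, generated by two elements $a,b$, provided the hypotheses of the lemma are met — namely that the $D_i$ have prime degrees and that the groups $\SL_1(D_i)$ are pairwise non-isomorphic over $K$. These are exactly the hypotheses we are carrying over from "$K$ and $p_i$ as above", so this is immediate. The subgroup $\Gamma$, viewed inside $G(L) = (\SL_{p_1}\times\cdots\times\SL_{p_m})(L)$ via the isomorphism above, is still free of rank $2$, since freeness is a property of the abstract group.

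Finally I would check that $\Gamma$ remains strongly dense when we pass from $K$ to $L$. The point is that Zariski density can only improve under base field extension for an inner form of a fixed group: if $\Delta \le \Gamma$ is nonabelian, then $\Delta$ is Zariski-dense in $G$ over $K$ by Lemma~\ref{A1}, so its Zariski closure $\overline{\Delta}$ over $K$ is all of $G$; extending scalars to $L$, the Zariski closure of $\Delta$ over $L$ is $(\overline{\Delta})_L = G_L$, which under the isomorphism is $\SL_{p_1}\times\cdots\times\SL_{p_m}$. Hence every nonabelian subgroup of $\Gamma$ is Zariski-dense in $\SL_{p_1}\times\cdots\times\SL_{p_m}$ over $L$, which is precisely the assertion. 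The only point requiring a little care — and the place I expect the mild subtlety to lie — is the existence of a \emph{single separable} extension $L/K$ simultaneously splitting all the $D_i$, and the verification that the base-change isomorphism $(\SL_1(D_i))_L \cong \SL_{p_i}$ does not disturb the abstract group structure of $\Gamma$; both are standard, the former because $K$ is a global field (so separable splitting fields of the $D_i$ exist and may be amalgamated) and the latter because an algebraic-group isomorphism over $L$ restricts to a group isomorphism on $L$-points.
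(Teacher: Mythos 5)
Your proposal is correct and follows essentially the same route as the paper, whose entire proof is the one-line choice of a finite separable extension $L/K$ splitting all the $\SL_1(D_i)$, with the base-change of the strongly dense free subgroup from Lemma~\ref{A1} left implicit. Your added verifications (a separable maximal subfield splits each $D_i$, and the Zariski closure of a set of $K$-points commutes with extension of scalars, so density over $K$ gives density over $L$) are exactly the details the paper omits.
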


\begin{proof}
Let $L$ be a finite separable extension of $K'$ which splits all the $\SL_1(D_i)$.
\end{proof}

 \section{Degenerations}

In this section, we introduce a variant of the notion of \emph{degeneration} 
introduced in \cite{BGGT}.
Let $K_0$ be a field, $A$ a $K_0$ algebra, and $n$ a positive integer.  Let $P_1$ and $P_2$ denote prime ideals of $A$ such that $P_1\subset P_2$, and let $K_i$ denote the field of fractions of $A/P_i$.
Let $\Gamma$ be a finitely generated subgroup of $\GL_n(A)$ and let $G_1$ and $G_2$ denote the Zariski closures over $K_1$ and $K_2$ respectively, of the images of $\Gamma$ in $\GL_n(K_1)$ and $\GL_n(K_2)$
respectively.

\begin{definition}
In this situation, we say \emph{$G_2$ is a degeneration of $G_1$} or \emph{$G_1$ can degenerate to $G_2$}.
\end{definition}

Note that if $G_1$ can degenerate to $G_2$, then $G_1^\circ$ can degenerate to $G_2^\circ$.
Indeed $\Gamma$ maps to $G_1(K_1)\times G_2(K_2)$, and the inverse image of $G_1^\circ(K_1)\times G_2^\circ(K_2)$ is of finite index in $\Gamma$.  For $i=1,2$, replacing $\Gamma$ by this finitely generated subgroup replaces $G_i$ by a finite index subgroup of $G_i^\circ$ and therefore with $G_i^\circ$ itself.

\begin{proposition}
Let $P_1\subset P_2\subset A$, $K_i$, $\Gamma\subset \GL_n(A)$ a finitely generated group, and $G_i$ be as above.  Let $X$ be a projective scheme over $A$ endowed with an action of $(\GL_n)_A$ defined over $A$.  
If $(G_1)_{\overline{K_1}}$ has a fixed point on $X_{\overline{K_1}}$, then $(G_2)_{\overline{K_2}}$ has a fixed point on $X_{\overline{K_2}}$.
\end{proposition}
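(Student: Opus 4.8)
The plan is to sidestep the groups $G_1,G_2$ — which are attached to the two primes and do not assemble into a family over $\Spec A$ — and instead transfer everything to the fixed locus of the single finitely generated group $\Gamma$, which is defined over all of $\Spec A$; there the assertion becomes a statement about specialization that properness takes care of.

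First I would pass from $G_i$-fixed points to $\Gamma$-fixed points. Picking generators $\gamma_1,\dots,\gamma_k\in\GL_n(A)$ of $\Gamma$, each $\gamma_j$ acts on $X$ as an automorphism over $A$; since $X$ is projective, hence separated, over $A$, the equalizer $X^{\gamma_j}$ of this automorphism with $\id_X$ is a closed subscheme of $X$ defined over $A$. Set $X^\Gamma=\bigcap_{j=1}^k X^{\gamma_j}$, a closed subscheme of $X$, hence projective and in particular proper over $A$; as $\gamma_1,\dots,\gamma_k$ generate $\Gamma$, its points are exactly the $\Gamma$-fixed points of $X$. Because equalizers and fibre products commute with base change, $(X^\Gamma)_F=(X_F)^\Gamma$ for every field $F$ over $A$. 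On the group side, for $x\in X(\overline{K_i})$ the stabilizer $\Stab(x)$ is a closed subgroup of $(\GL_n)_{\overline{K_i}}$, so $x$ is $\Gamma$-fixed if and only if it is $G_i$-fixed, $G_i$ being by definition the Zariski closure of the image of $\Gamma$ in $\GL_n(K_i)$. Putting these together, and using that $K_i$ is the residue field $\kappa(P_i)$, I would conclude that $(G_i)_{\overline{K_i}}$ has a fixed point on $X_{\overline{K_i}}$ if and only if $(X^\Gamma)_{\overline{K_i}}\neq\emptyset$, equivalently if and only if the fibre of $X^\Gamma\to\Spec A$ over the point $P_i\in\Spec A$ is nonempty.

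The rest is a specialization argument: $X^\Gamma\to\Spec A$ is proper, so its image $Z$ is closed in $\Spec A$; the hypothesis gives $P_1\in Z$, and $P_1\subset P_2$ means precisely that $P_2$ lies in $\overline{\{P_1\}}$, whence $P_2\in Z$, so the fibre over $P_2$ is nonempty — which, by the previous paragraph, is exactly the desired fixed point for $(G_2)_{\overline{K_2}}$ on $X_{\overline{K_2}}$. The one place that genuinely needs attention is the reduction in the second paragraph: ``having a $G_i$-fixed point'' must be recognized as a closed condition on $\Spec A$, which is not visible from the definition of the $G_i$ but becomes transparent once one replaces them by $\Gamma$ and the honest $A$-scheme $X^\Gamma$; for this the relevant facts are that the formation of $X^\Gamma$ commutes with passing to fibres (only separatedness of $X$ over $A$ is used, which projectivity supplies) and that $X^\Gamma$ is proper over $A$. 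Beyond that I do not anticipate any real difficulty.
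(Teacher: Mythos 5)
Your proposal is correct and is essentially the paper's own argument: both reduce $G_i$-fixed points to common fixed points of the finitely many generators of $\Gamma$ (using that stabilizers are closed and $G_i$ is the Zariski closure of the image of $\Gamma$), exhibit the locus of such fixed points as a closed subscheme proper over $\Spec A$ via completeness of $X$, and conclude by closedness of its image under the specialization $P_1 \subset P_2$. The only difference is packaging: the paper forms an incidence variety inside $(\GL_n)_A^d\times X$ and restricts along the section defined by $(\gamma_1,\ldots,\gamma_d)$, whereas you form the fixed-point subscheme $X^\Gamma$ directly as an intersection of equalizers, which is a slightly cleaner presentation of the same argument.
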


\begin{proof}
Let $\Gamma = \langle \gamma_1,\ldots,\gamma_d\rangle$.  The embedding of $\Gamma$ in 
$\GL_n(A)$ defines a section of $(\GL_n)_A^d\to \Spec A$.  The image is therefore closed in
$(\GL_n)_A^d$.  Let $Y$ denote the inverse image of $X$, diagonally embedded in $X^{d+1}$,
under the morphism $(\GL_n)_A^d\times X\to X^{d+1}$ given by 
$$(g_1,\ldots,g_d,x)\mapsto (g_1.x,\ldots,g_d.x),$$
and let $Z$ be the image of $Y$ under the projection map from $(\GL_n)_A^d\times X$
to $(\GL_n)_A^d$.  As $X$ is complete, the projection map is proper and therefore closed,
so $Z$ is a closed subset of $\Spec A$.  As $(G_1)_{\overline{K_1}}$ has a fixed point on $X_{\overline{K_1}}$, $Z$ contains $P_1$.  As $P_2$ lies in the closure of $P_1$, it contains $P_2$ as well, so choosing a $\overline{K_2}$-point of $Y$ which lies over $P_2$, $(G_2)_{\overline{K_2}}$ has a fixed point on $X_{\overline{K_2}}$.\end{proof}

This has the following consequence:

\begin{corollary}  \label{cor:pv}
With notations as above, if $G_1$ fixes a $k$-dimensional subspace of $\overline{K_1}^n$, then $G_2$ fixes a $k$-dimensional subspace of $\overline{K_2}^n$.
\end{corollary}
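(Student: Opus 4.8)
The plan is to deduce this immediately from the Proposition by taking $X$ to be a suitable Grassmannian. Concretely, I would let $X = \Grass(k,n)_A$ denote the Grassmannian of rank-$k$ subbundles of $A^n$, obtained by base change from the Grassmannian $\Grass(k,n)_\Z$ over $\Z$. Via the Pl\"ucker embedding, $\Grass(k,n)_\Z$ is projective over $\Z$, so $X$ is a projective scheme over $A$; and the standard linear action of $\GL_{n,\Z}$ on $\Grass(k,n)_\Z$ (induced by the action on $\Z^n$, hence on $A^n$ after base change) gives an action of $(\GL_n)_A$ on $X$ defined over $A$. For any field $\Omega$, the $\Omega$-points of $X$ are precisely the $k$-dimensional $\Omega$-subspaces $W\subseteq\Omega^n$, and for a subgroup $H\leq\GL_n(\Omega)$ the point $W$ is $H$-fixed exactly when $W$ is $H$-stable; the same description holds for the action of a closed subgroup such as $G_i$.

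With this choice of $X$, the hypothesis is exactly that $(G_1)_{\overline{K_1}}$ has a fixed point on $X_{\overline{K_1}}$, namely the $k$-dimensional subspace of $\overline{K_1}^n$ that $G_1$ fixes. Applying the Proposition, $(G_2)_{\overline{K_2}}$ has a fixed point on $X_{\overline{K_2}}$, which is to say that $G_2$ fixes a $k$-dimensional subspace of $\overline{K_2}^n$. Here one may harmlessly assume $0\leq k\leq n$, the statement being vacuous or trivial otherwise.

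There is no real obstacle beyond routine bookkeeping: the only points requiring care are the verification that $\Grass(k,n)$ and its $\GL_n$-action are already defined over $\Z$, hence over $A$, so that the Proposition applies, and the identification of the (geometric) fixed-point locus on the Grassmannian with the locus of $G_i$-invariant $k$-dimensional subspaces. Both are standard. All the substance of the corollary lies in the Proposition; this is simply the instance of it in which the projective $(\GL_n)_A$-scheme is taken to be a Grassmannian.
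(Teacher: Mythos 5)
Your proposal is correct and is exactly the paper's argument: the paper deduces the corollary from the preceding proposition by taking $X$ to be the Grassmannian of $k$-dimensional subspaces, which is complete (projective) over $A$ and carries the natural $\GL_n$-action, with fixed points corresponding to invariant subspaces. Your write-up just spells out the routine verifications the paper leaves implicit.
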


\begin{proof}  This follows from the previous result using the fact that the Grassmannian variety of $d$-dimensional
subspaces is a complete variety.
\end{proof} 

As $K_1$ and $K_2$ are both extensions of $K_0$, we have $\cha K_1 = \cha K_2$, so we can embed both fields as subfields of a common algebraically closed field, and it then makes sense to ask whether $G_2$ is conjugate (in $\GL_n$) to a subgroup of $G_1$.  In general, this is not the case.  For instance, if $A = \C[t]$, $P_1 = (0)$, $P_2 = (t)$, and $\Gamma$ is the (infinite cyclic) subgroup of $\GL_2(A)$ generated by
$\begin{pmatrix}1&1\\ 0&1+t\end{pmatrix}$, then
$G_1\cong  \G_m$ and $G_2\cong \G_a$.  

For any linear algebraic group $G$, we define $G^{\ss}$ to be the (connected and reductive) quotient of the identity component $G^\circ$
by its unipotent radical $\Rad_u(G^\circ)$.
Every homomorphism of algebraic groups $\phi\colon G\to H$ over an algebraically closed field $K$ determines a homomorphism $\psi\colon G^\circ\to H^{\ss}$.  As $U:=\psi(\Rad_u G^\circ)\subset H^{\ss}$
is connected and unipotent, it is contained in the unipotent radical of a canonically defined parabolic subgroup $P\subset H^{\ss}$ \cite[\S30]{humphreys}, so $\psi(G^\circ)\subset P$.
Letting $P=MN$ denote a Levi decomposition, $U\subset N$, so $\psi$ induces a well-defined homomorphism $G^{\ss}\to M$.  As Levi decomposition is unique up to conjugation \cite[Theorem 30.2]{humphreys},
composition of $\psi$ with $M\hookrightarrow H^{\ss}$ gives a morphism $\phi^{\ss}\colon G^{\ss}\to H^{\ss}$, which is well-defined up to conjugation.  If $G^\circ \cap \ker \phi$ is connected and unipotent then the same is true of $\ker\psi$, so $\phi^{\ss}$ is injective.

\begin{lemma}
\label{compatible}
Let $G$ and $H$ be linear algebraic groups over an algebraically closed field $K$ and $\phi\colon G\to H$ and $\rho\colon H\to \GL_n$ homomorphisms defined over $K$.  Then the representations of $G^{\ss}$ defined by $\rho^{\ss}\circ \phi^{\ss}$ and $(\rho\circ\phi)^{\ss}$ have isomorphic semisimplifications.
\end{lemma}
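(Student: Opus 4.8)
The plan is to reduce the claim to a comparison of composition factors. I use the following elementary facts about finite dimensional representations of a linear algebraic group $J$: (i) a closed connected normal unipotent subgroup acts trivially on every irreducible subquotient, so the $J^\circ$-composition factors of a representation are in a natural way representations of $J^{\ss}$; (ii) for a $J$-stable filtration, the multiset of $J$-composition factors of the whole is the union of those of the successive quotients; (iii) inflation of an $M$-module along a surjection $P\twoheadrightarrow M$ carries composition factors to their inflations; (iv) if two $J$-modules have the same composition factors, then for any closed subgroup $Q\subseteq J$ their restrictions to $Q$ again have the same composition factors (restrict a composition series term by term). One also sees directly from the construction above that, for any homomorphism $\sigma\colon J\to\GL(V)$, the representation $\sigma^{\ss}$ of $J^{\ss}$, pulled back to $J^\circ$, is the associated graded of $(V,\sigma)|_{J^\circ}$ with respect to a $\sigma(J^\circ)$-stable flag $\mathcal F$ on whose associated graded $\Rad_u J^\circ$ acts trivially; hence by (ii), (i) and (iii) the $J^{\ss}$-composition factors of $\sigma^{\ss}$ are precisely the $J^\circ$-composition factors of $\sigma|_{J^\circ}$. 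Applying this with $\sigma=\rho\circ\phi$, it suffices to show that $(\rho\circ\phi)|_{G^\circ}$ and $\rho^{\ss}\circ\phi^{\ss}$ (pulled back to $G^\circ$) have the same $G^\circ$-composition factors.

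Let $\psi\colon G^\circ\to H^{\ss}$ be the homomorphism attached to $\phi$ in the text, let $P=MN\subseteq H^{\ss}$ be the canonical parabolic with its Levi decomposition used to build $\phi^{\ss}$, and let $\pi\colon P\to M=P/N$ be the projection; then $\psi(G^\circ)\subseteq P$, and the composite $G^\circ\to G^{\ss}\xrightarrow{\phi^{\ss}}H^{\ss}$ equals $G^\circ\xrightarrow{\psi}P\xrightarrow{\pi}M\hookrightarrow H^{\ss}$. Now $(\rho\circ\phi)|_{G^\circ}$ is the pullback along $G^\circ\xrightarrow{\phi}H^\circ$ of $\rho|_{H^\circ}$, while by the previous paragraph (applied with $\sigma=\rho$) the $H^\circ$-modules $\rho|_{H^\circ}$ and $\rho^{\ss}$ (pulled back to $H^\circ$) have the same composition factors; so by (iv) and (iii) $(\rho\circ\phi)|_{G^\circ}$ has the same $G^\circ$-composition factors as $\rho^{\ss}\circ\psi$, i.e.\ as the pullback along $\psi\colon G^\circ\to P$ of the $P$-module $\alpha:=\rho^{\ss}|_P$. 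On the other side, by the factorization recorded above, the pullback of $\rho^{\ss}\circ\phi^{\ss}$ to $G^\circ$ is the pullback along the same map $\psi\colon G^\circ\to P$ of the $P$-module $\beta:=\pi^{*}(\rho^{\ss}|_M)$. Hence, again by (iii) and (iv), it is enough to show that $\alpha$ and $\beta$ have the same $P$-composition factors.

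This is the crux, and it is a special case of the following general statement: \emph{if $P=M\ltimes N$ with $N$ closed connected normal unipotent, then for every $P$-module $W$ the $P$-composition factors of $W$ --- which by (i) are inflations of irreducible $M$-modules $S_1,\dots,S_t$ --- coincide, regarded as $M$-modules, with the $M$-composition factors of $W|_M$.} Indeed, restricting a $P$-composition series of $W$ to $M$ gives a filtration of $W|_M$ whose graded pieces are the $S_i$, and these are already irreducible over $M$, so it is an $M$-composition series. Applied with $W=K^n$ and the $P$-action $\alpha=\rho^{\ss}|_P$ (whose restriction to $M$ is $\rho^{\ss}|_M$), this says that the $P$-composition factors of $\alpha$ are the inflations of the $M$-composition factors of $\rho^{\ss}|_M$; and by (iii) the $P$-composition factors of $\beta=\pi^{*}(\rho^{\ss}|_M)$ are likewise the inflations of the $M$-composition factors of $\rho^{\ss}|_M$. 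So $\alpha$ and $\beta$ have the same $P$-composition factors, which completes the chain of equalities. Therefore $(\rho\circ\phi)^{\ss}$ and $\rho^{\ss}\circ\phi^{\ss}$ are $G^{\ss}$-representations with the same composition factors, so their semisimplifications are isomorphic --- and these are well defined up to isomorphism because $\rho^{\ss}$ and $\phi^{\ss}$ are well defined up to conjugation.

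I expect the only genuine obstacle to be the bookkeeping of the second paragraph together with the general statement of the third: the whole argument turns on the fact that replacing the parabolic $P$ by its Levi $M$ inside the definition of $\phi^{\ss}$ does not change composition factors, and the rest is formal manipulation with Jordan--H\"older multiplicities.
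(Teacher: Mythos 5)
Your proof is correct, and it reaches the conclusion by a genuinely different route than the paper. The paper argues pointwise: it reduces the lemma to showing that $\rho^{\ss}\circ\phi^{\ss}(g^{\ss})$ and $(\rho\circ\phi)^{\ss}(g^{\ss})$ have equal characteristic polynomials for every $g^{\ss}\in G^{\ss}(K)$ (implicitly invoking the Brauer--Nesbitt principle that equal characteristic polynomials at all elements force isomorphic semisimplifications), lifts $g^{\ss}$ to $g\in G^\circ(K)$, writes $\phi(g)=hu$ with $u$ in the preimage $\tilde N$ of $N$, and then observes that $\rho(\tilde N)$ is connected unipotent, hence lies in the unipotent radical of a parabolic $Q$ of $\GL_n$ containing $\rho(\tilde P)$, so multiplying $\rho(h)$ by $\rho(u)$ does not change the characteristic polynomial. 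You instead work entirely with Jordan--H\"older data: you identify $\sigma^{\ss}$, pulled back to the identity component, with the associated graded of $\sigma$ along the flag defining the canonical parabolic of $\GL_n$, and your crux is the semidirect-product lemma that for $P=M\ltimes N$ with $N$ connected normal unipotent, a $P$-module and the inflation along $P\to M$ of its restriction to $M$ have the same $P$-composition factors; the rest is careful bookkeeping with inflation and restriction. The two arguments rest on the same structural mechanism --- the discrepancy between a representation and its Levi projection lives in the unipotent radical of a parabolic of $\GL_n$ --- but the paper's version is shorter, while yours is purely module-theoretic, avoids the reduction to characteristic polynomials and the appeal to Brauer--Nesbitt, and delivers the stronger-sounding (though equivalent) statement of equality of composition-factor multisets directly.
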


Note that the condition on semisimplifications just means that every irreducible representation of $G^{\ss}$
has the same multiplicity as a constituent of the two given representations.
\\

\begin{proof}
It suffices to prove that the characteristic polynomials of $\rho^{\ss}\circ \phi^{\ss}(g^{\ss})$
and $(\rho\circ\phi)^{\ss}(g^{\ss})$ are the same for all $g^{\ss}\in G^{\ss}(K)$.  
Let $g\in G^\circ(K)$ map to $g^{\ss}$.
Writing $\psi(g) = mn\in M(K)N(K)=P(K)\subset H^{\ss}(K)$, we have $\phi^{\ss}(g^{\ss}) = m$.

We denote by $\tilde N$ and $\tilde P$ the inverse images of $N$ and $P$ respectively in $H^\circ$.
Let $h\in H^\circ(K)$ map to $m$, so $\phi(g) = hu$ for some $u\in \tilde N(K)$.  We claim that $\rho(h)$ and $\rho(hu)$ have the same characteristic polynomial.

Indeed, $\rho(\tilde N)$ is a connected unipotent subgroup of $\GL_n$, so there exists a parabolic subgroup $Q$ of $\GL_n$ such that $\rho(\tilde N)\subset \Rad_u(Q)$ and $\rho(\tilde P)\subset  Q$.
Thus, $\rho(u)\in\Rad_u(Q)(K)$ and $\rho(h)\in Q(K)$.  So, indeed $\rho(h)$ and $\rho(h)\rho(u)$
have the same characteristic polynomial.
\end{proof}

If $\rho_1\colon G_1\to \GL_n$ and $\rho_2\colon G_2\to \GL_n$ are defined over $K$,
for a morphism $\phi\colon G_1^{\ss}\to G_2^{\ss}$ to be \emph{compatible with $\rho_i$} means
$\rho_2^{\ss}\circ \phi$ and $\rho_1^{\ss}$ define representations of $\G_1^{\ss}$ which have isomorphic semisimplifications.

\begin{lemma}
\label{semisimplify}
Let $\Gamma$ be a group and $f_1,f_2\colon \Gamma\to \GL_n(K)$ homomorphisms defining representations whose semisimplifications are isomorphic.
Let $G_1$ and $G_2$ denote the
Zariski closure of $f_1(\Gamma)$ and $f_2(\Gamma)$ respectively and $\rho_1$, $\rho_2$ the inclusion morphisms from $G_1$ and $G_2$ to $\GL_n$.  Then there exists an isomorphism between $G^{\ss}_1$ and $G^{\ss}_2$ compatible with $\rho_i^{\ss}$.

\end{lemma}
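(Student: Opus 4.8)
The strategy is to route both $f_1$ and $f_2$ through their common semisimplification. Fix a semisimple representation $f_0\colon\Gamma\to\GL_n(K)$ isomorphic to the semisimplification of $f_1$ (equivalently of $f_2$), let $G_0$ be the Zariski closure of $f_0(\Gamma)$ and $\rho_0\colon G_0\hookrightarrow\GL_n$ the inclusion. Since $K^n$ is a semisimple $\Gamma$-module it is a semisimple $G_0$-module, so $\Rad_u(G_0^\circ)$ --- being normal in $G_0$, connected and unipotent --- fixes a nonzero $G_0$-submodule of every irreducible constituent, hence acts trivially on $K^n$; by faithfulness $\Rad_u(G_0^\circ)=1$, i.e.\ $G_0^{\ss}=G_0^\circ$ is connected reductive and $\rho_0^{\ss}=\rho_0$. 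I will construct, for $i=1,2$, an isomorphism $\phi_i\colon G_i^{\ss}\xrightarrow{\sim}G_0^{\ss}$ compatible with $\rho_i^{\ss}$ and $\rho_0^{\ss}$, and then take $\phi:=\phi_2^{-1}\circ\phi_1$. That $\phi$ is compatible with $\rho_1^{\ss}$, $\rho_2^{\ss}$ follows formally: ``having isomorphic semisimplifications'' is transitive and is preserved under precomposition with any homomorphism.

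To build $\phi_i$: the constituents $S_1,\dots,S_r$ of a $G_i$-composition series of $K^n$ are exactly the $\Gamma$-constituents of $f_i$, so $\bigoplus_jS_j\cong f_0$ as $\Gamma$-modules and the $G_i$-action on $\bigoplus_jS_j$ has image (a closed subgroup containing the dense subgroup $f_0(\Gamma)$) equal to $G_0$, giving a surjection $q_i\colon G_i\twoheadrightarrow G_0$ with $\rho_0\circ q_i$ isomorphic to the semisimplification of $\rho_i\colon G_i\hookrightarrow\GL_n$. The kernel of $q_i$ acts trivially on each $S_j$, so --- stabilizing the composition series and acting trivially on its graded pieces inside the faithful $G_i\hookrightarrow\GL_n$ --- $\ker q_i$ consists of unipotent elements; in particular $\Rad_u(G_i^\circ)\subset\ker q_i$. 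Passing to $(-)^{\ss}$, the induced $q_i^{\ss}\colon G_i^{\ss}\to G_0^{\ss}$ is surjective and its kernel is a normal unipotent subgroup of the connected reductive group $G_i^{\ss}$; such a subgroup has trivial identity component (it lies in $\Rad_u(G_i^{\ss})=1$), hence is finite, hence central (a connected group acts trivially on a finite group by conjugation), hence trivial since it lies in the diagonalizable group $Z(G_i^{\ss})$, all of whose elements are semisimple. Thus $\phi_i:=q_i^{\ss}$ is an isomorphism.

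It remains to check that $\phi_i$ is compatible, i.e.\ $\rho_0^{\ss}\circ\phi_i$ and $\rho_i^{\ss}$ have isomorphic semisimplifications as $G_i^{\ss}$-modules. For this I will use the general fact that forming $\rho^{\ss}$ does not change the semisimplified representation: for any $\rho\colon G\to\GL_n$, the $G^{\ss}$-modules $\rho^{\ss}$ and $\operatorname{ss}(\rho|_{G^\circ})$ (the latter makes sense on $G^{\ss}$ because every constituent of $\rho|_{G^\circ}$ is trivial on $\Rad_u(G^\circ)$) have isomorphic semisimplifications. Indeed, in the construction of $\rho^{\ss}$ one has $\rho(G^\circ)\subset P=MN$ with $N=\Rad_u(P)$, and $\rho^{\ss}|_{G^\circ}$ is $\rho|_{G^\circ}$ followed by the Levi projection $P\to M$; the socle filtration of $K^n$ for the $N$-action is $P$-stable, hence stable both under $\rho(G^\circ)$ and under its image in $M$, and since $N$ acts trivially on each graded piece the two actions agree on the associated graded, so $\rho^{\ss}|_{G^\circ}$ and $\rho|_{G^\circ}$ have the same constituents. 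Applying this to $\rho=\rho_i$, and combining with $\operatorname{ss}(\rho_i|_{G_i^\circ})\cong\bigoplus_jS_j|_{G_i^\circ}$ (Clifford's theorem, so each $S_j|_{G_i^\circ}$ is already semisimple) and with $\rho_0^{\ss}\circ\phi_i\cong(\bigoplus_jS_j)|_{G_i^\circ}$ pushed down to $G_i^{\ss}$ (which comes from $\rho_0\circ q_i\cong\operatorname{ss}(\rho_i)$ and the definition of $\phi_i$), one gets $\operatorname{ss}(\rho_i^{\ss})\cong\bigoplus_jS_j\cong\operatorname{ss}(\rho_0^{\ss}\circ\phi_i)$ as $G_i^{\ss}$-modules, as required.

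The main obstacle is this compatibility step: $\rho^{\ss}$ is built from an essentially arbitrary choice of Levi subgroup in a parabolic, and the image of $G^\circ$ need not be dense in that parabolic, so one must argue carefully --- via the socle filtration above --- that the semisimplification is unaffected, and one must consistently distinguish $G^\circ$-modules from $G^{\ss}$-modules. A secondary point needing mild care in positive characteristic is the vanishing of the normal unipotent subgroup of $G_i^{\ss}$ and the separability ensuring $q_i^{\ss}$ is an isomorphism and not merely bijective; both are handled by standard structure theory.
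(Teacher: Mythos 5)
Your route is genuinely different from the paper's. The paper never introduces the closure $G_0$ of the common semisimplification: it forms the Zariski closure $G_{12}$ of the image of $\Gamma$ under the diagonal map into $G_1^{\ss}\times G_2^{\ss}$, notes that the two characteristic-polynomial morphisms agree on $G_{12}$ (they agree on the dense image of $\Gamma$ by Brauer--Nesbitt together with the argument of Lemma~\ref{compatible}), and then argues Goursat-style: a nontrivial Goursat kernel $H\times\{1\}\subset G_{12}$ would be a normal subgroup of the reductive group $G_1^{\ss}$ mapping into the variety of unipotent elements, which is impossible. You instead produce surjections $q_i\colon G_i\twoheadrightarrow G_0$ onto the closure of the semisimplified representation and invert them. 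Up to one local slip --- ``$\ker q_i$ consists of unipotent elements; in particular $\Rad_u(G_i^\circ)\subset\ker q_i$'' is a non sequitur; the correct reason is that $\Rad_u(G_i^\circ)$, being normal in $G_i$, has nonzero fixed vectors in each irreducible constituent $S_j$ and hence acts trivially on every $S_j$ --- your argument is complete in characteristic $0$, and there it is arguably more transparent than the paper's.

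The genuine gap is precisely the point you defer to ``standard structure theory.'' In characteristic $p$, surjectivity of $q_i^{\ss}$ together with triviality of its kernel on $K$-points does not make it an isomorphism: it can be a purely inseparable isogeny (Frobenius is the basic example), and no general separability statement closes this. Concretely, in characteristic $2$ let $\Gamma$ be Zariski-dense in $\SL_2(K)$, let $f_1$ be the restriction to $\Gamma$ of $\Sym^2$ of the standard representation (a nonsplit extension of the trivial module by the Frobenius twist of the standard module, whose scheme-theoretic kernel is exactly the center $\mu_2$), and let $f_2=f_0$ be its semisimplification. Then $G_1^{\ss}=G_1\cong\PGL_2$, $G_2^{\ss}=G_2=G_0\cong\SL_2$, and your $q_1^{\ss}$ is a purely inseparable isogeny $\PGL_2\to\SL_2$; since $\PGL_2\not\cong\SL_2$ in characteristic $2$, no argument can upgrade $\phi_1$ to an isomorphism, and indeed no compatible isomorphism $G_1^{\ss}\to G_2^{\ss}$ exists at all in this example. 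So your final step fails as written. In fairness, the paper's Goursat argument has the same blind spot --- it only excludes Goursat kernels with nontrivial $K$-points, and in the example above $G_{12}$ is the graph of an inseparable isogeny rather than of an isomorphism --- so in positive characteristic what either argument actually yields is an isogeny (equivalently, an injection compatible up to Frobenius twist), which is what the later rank and subrepresentation arguments use; in characteristic $0$, or whenever separability is available, both proofs are complete.
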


\begin{proof}
Passing to a finite index normal subgroup of $\Gamma$, we may assume that $G_1$ and $G_2$ are connected.  As the maps $\rho_i$ are injective, the same is true for $\rho_i^{\ss}$.

For all $g\in \Gamma$ let $g_i$ denote the image of $f_i(g)$ in $G_i^{\ss}(K)$.  Then
$\rho_i^{\ss}(g_i)$ has the same characteristic polynomial as $f_i(g_i)$, which is therefore the same for $i=1$ and $i=2$.  Let $G_{12}$ denote the Zariski closure of $\Gamma$ under the diagonal map to $G_1^{\ss}(K)\times G_2^{\ss}(K)$.  There are two morphisms from $G_{12}$ to characteristic polynomials, one via $\rho_1^{\ss}$ and one via $\rho_2^{\ss}$, and they must coincide.

We claim that $G_{12}$ is the diagonal of an isomorphism between $G_1^{\ss}$ and $G_2^{\ss}$.
Otherwise, there exists a non-trivial normal subgroup $H$ of, say, $G_1^{\ss}$ such that $H\times \{1\}\subset G_{12}$, and $H$ maps under $\rho_1^{\ss}$ to the closed subvariety of unipotent elements.  This is impossible, so $H$ gives an isomorphism between $G_1^{\ss}$ and $G_2^{\ss}$ compatible with the $\rho_i^{\ss}$.

\end{proof}

\begin{theorem}
\label{quasi-inj}
Let $K$ be a field, 
$$f\colon \Gamma\to \GL_n(K[[t]])\subset \GL_n(K((t)))$$
a representation of a group $\Gamma$, and $\bar f\colon \Gamma\to \GL_n(K)$  the  reduction of $f$ (mod $t$).
Let $G$ (resp. $\bar G$) denote the Zariski closure in $\GL_n$ over $K((t))$ (resp. $K$) of $f(\Gamma)$ (resp. $\bar f(\Gamma)$), and let $\rho$ (resp. $\bar\rho$) denote the inclusion $G\to\GL_n$ as a morphism of algebraic groups over $K((t))$ (resp. $K$).
Then there exists a finite extension $L$ of $K((t))$ and
an injective homomorphism $\bar G^{\ss}\times_K L\to G^{\ss}\times_{K((t))}L$ compatible with $\rho^{\ss}\times_{K((t))}L$ and $\bar\rho^{\ss}\times_K L$.

\end{theorem}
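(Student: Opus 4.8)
## Proof proposal for Theorem~\ref{quasi-inj}

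The plan is to realize $\bar f$ as a genuine degeneration of $f$ in the sense of Section~4 and then feed the resulting data into Lemmas~\ref{compatible} and~\ref{semisimplify}. Concretely, I would take $A = K[[t]]$, which is a local ring with maximal ideal $P_2 = (t)$ and minimal prime $P_1 = (0)$; then $K_1 = K((t))$ and $K_2 = K$, and $\Gamma \subset \GL_n(A)$ via $f$. With this setup $G$ is the Zariski closure over $K_1$ and $\bar G$ the Zariski closure over $K_2$, so by the Section~4 formalism $\bar G$ is a degeneration of $G$. As noted just after the definition of degeneration, we may pass to a finite-index (finitely generated) subgroup of $\Gamma$ to arrange that $G$ and $\bar G$ are connected; this does not change either Zariski closure up to replacing it by its identity component, and on the level of $(\cdot)^{\ss}$ nothing is lost since $G^{\ss}$ depends only on $G^\circ$.

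The main step is to compare characteristic polynomials of $\rho(f(\gamma))$ and $\bar\rho(\bar f(\gamma))$ for $\gamma \in \Gamma$. For each $\gamma$, the characteristic polynomial of $f(\gamma) \in \GL_n(K[[t]])$ is a polynomial with coefficients in $K[[t]]$; reducing mod $t$ gives the characteristic polynomial of $\bar f(\gamma)$. So the two characteristic polynomials agree "mod $t$" but not on the nose. To repair this I would pass to a suitable finite extension $L$ of $K((t))$: more precisely, I want $L$ to be large enough that the semisimplifications of $\rho \times_{K_1} L$ and the "constant" representation $\bar\rho$ become visibly related. The cleanest route is to use a limit/valuation argument. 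Choose a generator $u$ of the maximal ideal of the valuation ring of $L$; the key point is that the Zariski closure $G_L$ of $f(\Gamma)$ over $L$ and its special fiber should be related via Corollary~\ref{cor:pv} applied to all subquotients: if $G_L^{\ss}$ preserves no extra structure (fixed subspaces, decompositions) on $\overline{L}^n$ beyond what is forced, those constraints degenerate to corresponding constraints on $\bar G^{\ss}$, yielding an injection on semisimplified quotients.

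More carefully, here is how I would organize the heart of the argument. Consider the family over $\Spec K[[t]]$ of representations; the generic fiber gives $\rho^{\ss}$ on $G^{\ss}$ and the special fiber gives $\bar\rho^{\ss}$ on $\bar G^{\ss}$. I would show that the multiplicity of each irreducible constituent can only jump upward under specialization — i.e., semisimplifying the generic representation and reducing mod $t$, every generic constituent survives (with at least its multiplicity) in the special fiber. This is where completeness of the Grassmannian (Corollary~\ref{cor:pv}) and, more generally, Proposition~\ref{cor:pv}'s parent statement about fixed points on projective schemes enter: invariant flags and invariant subspaces for $\bar G$ are inherited from $G$. Semisimplicity of $\rho^{\ss}$ as a representation of $G^{\ss}$ then lets me conclude that the semisimplification of $\bar\rho^{\ss}\big|_{\bar G^{\ss}}$ contains, as a subrepresentation with matching multiplicities, the pullback of the semisimplification of $\rho^{\ss}$ under a suitable homomorphism $\bar G^{\ss} \to G^{\ss}$ — and the needed homomorphism itself is produced, after base change to $L$, exactly by Lemma~\ref{semisimplify} applied to $f_1 = \bar f$, $f_2 = f$ (viewed over a common algebraically closed overfield of both $K$ and $K((t))$), once we know their semisimplifications agree. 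That comparison of semisimplifications over a common algebraically closed field is the content of the characteristic-polynomial computation above.

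The step I expect to be the main obstacle is the passage from "characteristic polynomials of $f(\gamma)$ and $\bar f(\gamma)$ agree mod $t$" to "the semisimplifications agree over some finite extension $L$", since a priori the generic representation could have strictly larger image and a coarser constituent structure than the special one. The resolution is that the inclusion goes only one way — the generic structure specializes, giving an injection $\bar G^{\ss} \hookrightarrow G^{\ss}$ rather than an isomorphism — and the finite extension $L$ is needed precisely to split whatever field-of-definition obstructions arise when matching up the constituents and the homomorphism between the (possibly non-split) reductive groups $\bar G^{\ss}$ and $G^{\ss}$. Injectivity of the constructed map then follows from the last sentence of the discussion preceding Lemma~\ref{compatible}: since $\rho$ and $\bar\rho$ are injective (they are inclusions of Zariski closures), the kernels of the relevant $\psi$-maps are trivial, hence connected and unipotent, so $\phi^{\ss}$ is injective.
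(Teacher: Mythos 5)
Your reduction setup is fine, but the heart of your argument has a genuine gap: you never actually construct the homomorphism $\bar G^{\ss}\to G^{\ss}$. The route you propose---show that the semisimplifications of $f$ and $\bar f$ ``agree over some finite extension $L$'' and then invoke Lemma~\ref{semisimplify} with $f_1=\bar f$, $f_2=f$ over a common algebraically closed overfield---fails at the first step: the hypothesis of Lemma~\ref{semisimplify} is that the two $\Gamma$-representations have isomorphic semisimplifications over the \emph{same} field, and that is simply false here. Take $\Gamma=\Z$ and $f(1)=\mathrm{diag}(1+t,(1+t)^{-1})$: then $f$ is already semisimple over $K((t))$ with eigenvalues $1+t,(1+t)^{-1}$, while $\bar f(1)$ is the identity, and no field extension makes these isomorphic; the characteristic polynomials agree only modulo $t$. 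This is precisely why the theorem asserts an injection rather than an isomorphism, but saying ``the inclusion goes only one way'' is a restatement of the conclusion, not an argument. Likewise Corollary~\ref{cor:pv} only transports $G$-invariant subspaces (or flags, or other projective fixed points) into $\bar G$-invariant ones; knowing that $\bar G$ preserves at least as much structure as $G$ does not by itself yield an embedding of the reductive group $\bar G^{\ss}$ into $G^{\ss}$ compatible with the $n$-dimensional representations, and your ``multiplicities can only jump upward under specialization'' step is not even meaningful before such a homomorphism exists, since the generic and special constituents are representations of two different groups.

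What the paper's proof supplies, and what your outline is missing, is the integral/Bruhat--Tits mechanism that manufactures the homomorphism. One first semisimplifies $f$ \emph{integrally} (intersect an invariant flag with $K[[t]]^n$, choose free complements), so the new $f$ still lands in $\GL_n(K[[t]])$ and its reduction has unchanged characteristic polynomials; after a finite extension one may assume $G$ is split. Then $f(\Gamma)$ is a bounded subgroup of $G(K((t)))$, hence fixes a point of the Bruhat--Tits building \cite[3.3.1]{BT1}, and---this is where the finite extension $L$ is genuinely needed, via \cite[2.4]{Larsen95}, not for splitting constituents---after a further finite extension it fixes a \emph{hyperspecial} vertex. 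By \cite[4.6.22]{BT2} this gives a split group $G_0$ over $K$ and an isomorphism $\iota\colon G\to G_0\times_K K((t))$ with $\iota(f(\Gamma))\subset G_0(K[[t]])$, and by \cite{Tits71} the representation $\rho$ descends to $\rho_0\colon G_0\to \GL_n$ over $K$ up to conjugation by some $g\in\GL_n(K((t)))$; Brauer--Nesbitt together with Lemma~\ref{semisimplify} (applied to the two mod-$t$ reductions, both over $K$---the legitimate use of that lemma) allows one to take $g=1$. Only after all this does reduction mod $t$ commute with the group embedding, giving $\bar G\subset\rho_0(G_0)$ and hence the injective homomorphism $\bar G^{\ss}\times_K L\to G^{\ss}\times_{K((t))}L$, with compatibility supplied by Lemma~\ref{compatible}. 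Without this (or an equivalent integral-model argument) your proposal does not close.
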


\begin{proof}
Replacing $\Gamma$ with a finite index subgroup, we may assume without loss of generality that $G$ and $\bar G$ are connected.

Let $V_1\subset V_2\subset\cdots\subset V_k = K((t))^n$ denote a flag of $K((t))$-spaces preserved by $f(\Gamma)$ and such that $f(\Gamma)$ acts irreducibly on each $V_{j+1}/V_j$.  Let $\Lambda_j = V_j\cap K[[t]]^n$, so that $f(\Gamma)$ preserves each $\Lambda_j$ and $\Lambda_{j+1}/\Lambda_j$ is a free $K[[t]]$-module of rank $\dim V_{j+1}/V_j$.  We may therefore fix a $K[[t]]$-free complement $M_{j+1}$
to $\Lambda_j$ in $\Lambda_{j+1}$.  We have a surjective homomorphism from the stabilizer of the flag $V$
in $\GL_n(K[[t]])$ to $\prod_{j=1}^k \Aut_{K[[t]]} M_j$, and a section of this latter group whose image consists of $K[[t]]$-linear maps preserving the direct sum decomposition given by the $M_j$.  Replacing $f$ by the composition with these two homomorphisms, we obtain a semisimplification of $f$ which still lands in $\GL_n(K[[t]])$ and such that $\bar f(g)$ has the same characteristic polynomial as $\bar f(g)$ for all $g\in \Gamma$.  Therefore, without loss of generality, we may assume $f$ is semisimple.

By \cite[Chap.~2, \S2]{SerreLF}, every finite extension of $K((t))$ is complete with respect to the unique extension of the $t$-adic valuation on $K((t))$.  Each such extension is therefore of the form $K'((t'))$, where
$K\subseteq K'$, $K[[t]]\subseteq K'[[t']]$, and $t$ is a non-unit in $K'[[t']]$.
As every semisimple group splits over a finite extension, we may assume that $G$ is split over $K((t))$.

\def\B{\mathcal{B}}
\def\Int{\mathrm{Int}}

Next, we claim that $f(\Gamma)$ is a bounded subgroup of $G(K((t)))$ in the sense of Bruhat-Tits \cite[4.2.19]{BT2}.  Indeed, we can fix a finite set of generators of the coordinate ring of $G$.
As $\rho$ is a closed immersion, each generator lifts to an element of the coordinate ring of $\GL_n$.
As $f(\Gamma)$ is bounded in the $\GL_n$ sense, it is bounded in the $G$ sense as well.
By  \cite[3.3.1]{BT1}, it stabilizes the centroid of some facet of the Bruhat-Tits building $\B(G/K((t)))$.

By \cite[2.4]{Larsen95}, replacing $K((t))$ by a finite extension, we may assume that $f(\Gamma)$ stabilizes a hyperspecial vertex of $\B(G/K((t)))$.
By \cite[4.6.22]{BT2}, there exists a split semisimple group $G_0$ over $K$ and an isomorphism $\iota$ from $G$ to $G_0\times_K K((t))$ such that $\iota(f(\Gamma))\subset G_0(K[[t]])$.  By \cite{Tits71}
there exists a homomorphism  $\rho_0\colon G_0\to \GL_n$  of algebraic groups over $K$ 
whose extension of scalars to $K((t))$,  via $\iota$, defines the same representation as $\rho$. 
Explicitly, this means there exists $g\in \GL_n(K((t)))$ such that 
$$(\rho_0\times_K {K((t))})\circ \iota = \Int(g)\circ \rho$$
as $K((t))$-morphisms $G \to \GL_n$.  Thus,
$$(\rho_0\times_K {K((t))})\circ\iota(f(\Gamma)) = g^{-1} f(\Gamma) g.$$

By the Brauer-Nesbitt theorem, the (mod $t$) reductions of $f(\Gamma)$ and $g^{-1}f(\Gamma) g$ have the same semisimplification.
By Lemma~\ref{semisimplify}, without loss of generality we may assume $g=1$.

Replacing $f$ by $\iota\circ f$, we have therefore reduced to the situation that $G$ is a  semisimple group over $K$ embedded in $\GL_n$ by the $K$-homomorphism $\rho_0$, and $f(\Gamma)\subset G_0(K((t)))$ is a Zariski dense subgroup which lies in $G_0(K[[t]])$.  Reduction (mod $t$) commutes with $\rho_0$.  Thus, the Zariski-closure $\bar G$ of $\bar f(\Gamma)$ in $\GL_n$ over $K$ is contained in $\rho_0(G)$.  The Zariski closure $G$ of $f(\Gamma)$ in $\GL_n$ over $K((t))$ is $\rho_0(G)\times_K K((t))$.  

We conclude that there is an injective homomorphism of algebraic groups over $K((t))$, $\bar G\times_K K((t))\to G$.  By Proposition~\ref{compatible}, the injective homomorphism of algebraic groups
$\bar G^{\ss}\times_K K((t)) \to G^{\ss} = G$ is compatible with $\bar\rho^{\ss}$ and $\rho$ up to semisimplification.
The theorem follows.

\end{proof}

\begin{theorem}
\label{degeneration}
If $G_2$ is a degeneration of $G_1$ and $\rho_i$ denotes the inclusion of $G_i$ in $\GL_n$,
then there exists a field $L$ such that after extending scalars to $L$,
there exists an injective homomorphism $G_2^{\ss}\to G_1^{\ss}$ compatible with $\rho_i^{\ss}$.
\end{theorem}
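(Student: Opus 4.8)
The plan is to deduce the statement from Theorem~\ref{quasi-inj} by realizing the specialization $P_1\rightsquigarrow P_2$ by a complete discrete valuation ring of the form $k[[t]]$, and then reading off the conclusion. As a first step I would cut down the coefficient ring: since $\Gamma$ is finitely generated, replace $A$ by the $K_0$-subalgebra generated by the matrix entries of a finite generating set of $\Gamma$ together with their inverses, and $P_1,P_2$ by their contractions to this subalgebra. This only shrinks $K_1$ and $K_2$ to subfields over which the closures $G_1$ and $G_2$ are still defined --- the Zariski closure of a subset of $\GL_n(F)$ commutes with extension of the base field $F$, so the original $G_i$ is the base change of the new one --- which is harmless for a statement concerning the $G_i$ after a further extension. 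Now $R:=A/P_1$ is an excellent Noetherian domain with fraction field $K_1$, and $\q:=P_2/P_1$ is a prime with $\operatorname{Frac}(R/\q)=K_2$; if $P_1=P_2$ there is nothing to prove, so $\q\neq(0)$ and $R_\q$ is a Noetherian local domain of dimension $\ge 1$ with fraction field $K_1$ and residue field $K_2$.

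Next I would produce a discrete valuation ring $\mathcal O$ with $R_\q\subseteq\mathcal O\subseteq K_1$ and $\m_{\mathcal O}\cap R_\q=\q R_\q$: blow up $\Spec R_\q$ at its closed point, take the local ring at the generic point of a component of the (nonempty, since $\dim R_\q\ge 1$) exceptional divisor --- a one-dimensional Noetherian local domain with fraction field $K_1$ dominating $R_\q$ --- pass to its integral closure (module-finite, by excellence), and localize at a maximal ideal lying over $\q R_\q$. Write $k$ for the residue field of $\mathcal O$. Since $R_\q\to\mathcal O$ is a local homomorphism we have $\m_{\mathcal O}\cap R_\q=\q R_\q$, so $k$ contains $K_2=R_\q/\q R_\q$; completing, $\widehat{\mathcal O}\cong k[[t]]$ is an equicharacteristic complete DVR, the composite $R\hookrightarrow R_\q\hookrightarrow\mathcal O\hookrightarrow k[[t]]\subset k((t))$ is injective, and reduction modulo $t$ carries $R$ to $R/\q\hookrightarrow k$.

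Apply this to $\Gamma\subset\GL_n(R)$, obtaining $f\colon\Gamma\to\GL_n(k[[t]])\subset\GL_n(k((t)))$. Its reduction mod $t$ is the composite $\Gamma\to\GL_n(K_2)\hookrightarrow\GL_n(k)$, whose Zariski closure over $k$ is $G_2\times_{K_2}k$, while the Zariski closure of $f(\Gamma)$ over $k((t))$ is $G_1\times_{K_1}k((t))$ --- again because Zariski closure of a subset of $\GL_n(F)$ commutes with extension of $F$. Theorem~\ref{quasi-inj} then supplies a finite extension $L$ of $k((t))$ and an injective homomorphism $G_2^{\ss}\times_{K_2}L\to G_1^{\ss}\times_{K_1}L$ compatible with $\rho_1^{\ss}$ and $\rho_2^{\ss}$; enlarging $L$ if necessary (for instance to a perfect closure, so that forming $(-)^{\ss}$ is unaffected by the base changes in play), this is exactly the assertion.

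The real content here is already packaged inside Theorem~\ref{quasi-inj} --- the Bruhat--Tits boundedness argument together with the Brauer--Nesbitt and semisimplification bookkeeping. The step I expect to require the most care in the present deduction is the construction of the trait $\Spec k[[t]]\to\Spec R$ meeting both the generic point and $\q$: this is standard, but it genuinely uses that $R$ is Noetherian and excellent, which is precisely why one first passes to a finitely generated model; and one must keep track, throughout the reduction to the $k[[t]]$-setting, that the Zariski closures that arise --- and their semisimplifications --- really are the expected base changes of $G_1^{\ss}$ and $G_2^{\ss}$.
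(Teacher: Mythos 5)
Your proposal is correct and follows essentially the same route as the paper: both reduce the specialization $P_1\subseteq P_2$ to a homomorphism $\Gamma\to\GL_n(k[[t]])$ over an equicharacteristic complete DVR (using, implicitly or explicitly, that Zariski closures of groups of rational points commute with extension of the ground field) and then invoke Theorem~\ref{quasi-inj}. The only divergence is in the commutative-algebra reduction --- the paper passes to a saturated chain of primes, localizes to a one-dimensional local domain, normalizes and completes, whereas you pass to a finitely generated model and construct a divisorial valuation by blowing up, landing in a possibly larger residue field $k\supseteq K_2$ --- which is a minor variation and, as you note, harmless since the statement permits an extension of scalars.
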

 
\begin{proof}
As there exists a finite chain of prime ideals maximal among all chains connecting $P_1$ and $P_2$,
without loss of generality, we may and do assume that there is no prime ideal intermediate between $P_1$ and $P_2$.  Replacing $A$ by the localization of $A/P_1$ at $P_2/P_1$
and $\Gamma$ by its image in $\GL_n((A/P_1)_{P_2/P_1})$,
we may assume without loss of generality that $A$ is a $1$-dimensional local domain, $P_1=0$, and $P_2$ is a maximal ideal.

Let $\tilde A$ denote the normalization of $A$.  The morphism $\Spec \tilde A\to \Spec A$ is birational and finite \cite[Tag 0BXR]{Stacks}, so its image is closed and contains the generic point of $\Spec A$.
Therefore, there exists a prime ideal $\tilde P_2$ of $\tilde A$ lying over $P_2$.  Replacing $A$ by $\tilde A$ and $P_2$ by $\tilde P_2$, we may assume that $A$ is a DVR.
Replacing $A$ by its completion at $P_2$, we may assume that it is a complete
DVR, $P_1$ is the zero-ideal, and $P_2$ is the maximal ideal.  The fields $K_1$ and $K_2$ are respectively the fraction field and the residue field of $A$.  By Cohen's classification of complete equicharacteristic regular local rings \cite[Theorem 15]{Cohen},
we have isomorphisms $A\cong K_2[[t]]$, and $K_1 \cong K_2((t))$.  

By the previous theorem, there exists $L$ such that $G_2^{\ss}\times_{K_2} L$ is isomorphic to a
closed subgroup of $G_1^{\ss}\times_{K_1} L$, compatibly with $G_i^{\ss}\to \GL_n$, so the theorem follows. 
\end{proof}

If $H_1$ and $H_2$ are closed subgroups of $\GL_n$ over $K$ we write $H_1\prec_{\GL_n} H_2$ if and only if there exists an extension $L/K$ and an injective homomorphism $H_1^{\ss}\times_K L\hookrightarrow H_2^{\ss}\times_K L$
compatible with the given $n$-dimensional representations of $H_1\times_K L$ and $H_2\times_K L$.
If $H_2$ can degenerate to $H_1$ as subgroups of $\GL_n$, then $H_1\prec H_2$.  If $H_2$ and $H_1$ are subgroups of a semisimple group $G$, we write $H_1\prec H_2$ if and only if $H_1\prec_{\GL_n}H_2$ for all faithful representations $G\hookrightarrow \GL_n$.  If $K$ is algebraically closed, this implies that $H_1^{\ss}$ is isomorphic to a closed subgroup of $H_2^{\ss}$ over $K$ itself.

Our strategy for constructing strongly dense free subgroups of $G$ over a transcendental extension $K$ of a global field $K_0$ is to find \emph{special} semisimple subgroups
$H_1,\ldots,H_r$ of $G$ and homomorphisms $\rho_i\colon \Gamma\to H_i(K_0)$ which are strongly dense thanks 
to Corollary~\ref{special}.  We then construct a curve of homomorphisms $\rho\colon \Gamma\to G(A)$ which specializes
to $\rho_1,\ldots,\rho_r$ at different points of the curve $\Spec A$.  The Zariski closure $H$ of $\rho(\Delta)$ for any non-abelian free subgroup of $\Gamma$
then satisfies $H_i\prec H$ for all $i$, thanks to Theorem~\ref{degeneration}.  The goal of the next section is to find choices of $H_i$ for which these conditions imply $H=G$.

\section{Special Subgroups}

In this section, we gather some results about subgroups of simple algebraic groups.  Throughout this section, we always assume $K$ is algebraically closed and let $p$ be the characteristic of $K$ when it is non-zero.

\begin{definition}
We say a closed subgroup $H$ of a linear algebraic group $G$ is \emph{special} if it is the image of a homomorphism $\SL_{p_1}\times \cdots \times \SL_{p_m}\to G$ for some sequence of primes $p_1,\ldots,p_m$.  We say it is \emph{very special} if we can take $m=1$ and $p_1=2$.
\end{definition}

Note that if $K$ contains a global field $K_0$, every $K$-group of the form $\SL_{p_1}\times \cdots \times \SL_{p_m}$ over $K$ is obtained from a product of almost abelian simple algebraic groups of the form $\SL_1(D_1)\times \cdots\times \SL_1(D_m)$ for some $K_0$-central division algebras $D_1,\ldots,D_m$.  Moreover we may assume that each $D_i$ ramifies over some place that no other $D_j$ ramifies over.

\begin{definition}
A collection $\{H_1,\ldots,H_k\}$ of special subgroups of $G$ is \emph{generating} if the only subgroup $H$ of $G$ satisfying $H_i\prec H$ for all $i$ is $G$ itself.
\end{definition}

Note that if $G$ has a generating collection of special (resp. very special) subgroups, then the same is true for all groups isogenous to $G$.  Indeed, it is true for the universal covering group $\tilde G$ of $G$ since groups of the form
$\SL_{p_1}\times \cdots \times\SL_{p_m}$ are simply connected, so every homomorphism $\SL_{p_1}\times \cdots \times\SL_{p_m}\to G$ lifts to $\SL_{p_1}\times \cdots \times \SL_{p_m}\to \tilde G$.  It is clearly true for any quotient of a group for which it is true.

\begin{lemma}
\label{partition}
Let $k$ and $n$ be positive integers with $k<n$.  Then there exists a partition $\pi$ of $n$ such that no part has a prime factor greater than $3$, and $k$ is not a sum of any subset of the parts of $\pi$.
\end{lemma}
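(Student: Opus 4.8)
The plan is to prove this by strong induction on $n$, building $\pi$ by peeling off one large $3$-smooth part (a part with no prime factor exceeding $3$) at a time. For the set-up, note that a partition $\pi$ of $n$ has a sub-collection of its parts summing to $k$ if and only if it has one summing to $n-k$ (take the complementary sub-collection); so, replacing $k$ by $n-k$ if necessary, we may assume $k\le n/2$, and in particular $n\ge 2$. Let $M_1>M_2>\cdots$ be the $3$-smooth numbers that are $\le n$, in decreasing order; there are at least two of them since $1,2\le n$. Since $2M_1$ is again $3$-smooth, maximality of $M_1$ forces $2M_1>n$, so $M_1>n/2\ge k$.

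The crux of the argument is to find a $3$-smooth number $c$ with $k<c\le n$ and $c\ne n-k$. If $M_1\ne n-k$ we simply take $c=M_1$. The genuinely delicate case is $M_1=n-k$, i.e.\ $n-k$ is itself the largest $3$-smooth number $\le n$, so that the naive big part $M_1$ would leave a remainder of exactly $k$; there I take $c=M_2$ and must show $M_2>k$. For this I use the elementary fact that two consecutive $3$-smooth numbers differ by a factor at most $3/2$ once they are $\ge 2$: if $M=2^a3^b\ge 2$, then $\tfrac43 M$ is $3$-smooth when $b\ge 1$, and $\tfrac98 M$ is $3$-smooth when $b=0$ and $a\ge 3$, while for $M\in\{2,4\}$ one checks directly that $3$ and $6$ respectively lie in $(M,\tfrac32 M]$ and are $3$-smooth. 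Granting this, and noting $M_1\ge 2$ (the case $M_1=1$ would force $n=1$), the $3$-smooth successor of $M_1$ is at most $\tfrac32 M_1=\tfrac32(n-k)$ but exceeds $n$ by maximality, which gives $3k<n$; and since $M_2$ is the $3$-smooth number immediately below $M_1$ we have $M_1\le 2M_2$, so $M_2\ge (n-k)/2>k$, the last inequality precisely because $3k<n$. This proves the claim.

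With such a $c$ in hand the remaining steps are routine. If $c=n$, take $\pi=\{n\}$, whose only sub-sums are $0$ and $n$. If $0<n-c<k$, take $\pi=\{c\}$ together with $n-c$ parts equal to $1$: a sub-collection using $c$ has sum $\ge c>k$, and one not using $c$ has sum $\le n-c<k$. If $n-c>k$, the inductive hypothesis applied to $(n-c,k)$ — legitimate since $k<n-c<n$ — yields a partition $P$ of $n-c$ with no sub-collection summing to $k$, and $\pi=\{c\}\cup P$ works by the same dichotomy. Since $c\ne n-k$ rules out $n-c=k$, these cases are exhaustive, and the recursion terminates because $n-c<n$ (the smallest instances, such as $n=2$, fall under $c=n$).

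I expect the main obstacle to be exactly the case $M_1=n-k$, and it is there that the hypothesis that the prime factors are $\le 3$ (rather than merely $\le 2$) is essential: consecutive powers of $2$ can be a factor $2$ apart, which is too weak to force $3k<n$, whereas admitting the prime $3$ brings the gap between consecutive smooth numbers down to the needed $3/2$. Everything else — the reduction to $k\le n/2$, the three finishing cases, and termination — is straightforward once the right part $c$ has been identified.
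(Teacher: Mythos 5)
Your proof is correct and takes essentially the same route as the paper: induction on $n$, peeling off a single large $3$-smooth part chosen to dodge the bad values $k$ and $n-k$, with the multiplicative density of the numbers $2^a3^b$ supplying such a part. The only differences are bookkeeping — you reduce to $k\le n/2$ and use the factor-$\tfrac32$ gap between consecutive $3$-smooth numbers, where the paper instead exhibits two explicit $3$-smooth numbers in $(n/2,n)$ (a power of $2$ and $3$ times a power of $2$) and picks whichever avoids $\{k,n-k\}$.
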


\begin{proof}
Let $n$ be the smallest integer for which the statement fails.
As $n$ cannot be of the form $2^a 3^b$, we may assume $n\ge 5$.
Let $m$ denote any integer of the form $2^a 3^b$ which lies in $(n/2,n)$.  
Thus, $m\le n$, and $n-m<m$.
If $k<n-m$ or $k>n-m$ but $k\neq m$, then by assumption, there exists a partition $\pi'$ of $n-m$ such that
no part of $\pi'$ has a prime factor greater than $3$, and $k$ is not a sum of parts of $\pi'$.
If $\pi$ is the partition of $n$ obtained by adding the part $m$ to $\pi'$, then no part of $\pi$
has a prime factor $>3$, and $k$ is not a sum of parts of $\pi$, contrary to assumption.

Therefore, it suffices to prove that there are at least two different values of $m\in (n/2,n)$, $m_1$ and $m_2$,
neither of which has a prime factor greater than $3$.  We can take $m_1$ to be the smallest power of $2$ greater than $n/2$ and $m_2$ to be $3$ times the smallest power of $2$ greater than $n/6$.
\end{proof}
 
 \begin{proposition}
 \label{A char not 2}
 Let $n$ be a positive integer and $K$ a field not of characteristic $2$.
 For any positive integer $k<n$, there exists a very special subgroup $H$ of $\SL_n$ defined over $K$
 such that the restriction of the natural representation of $\SL_n$ to $H$ has no $k$-dimensional subrepresentation.
 \end{proposition}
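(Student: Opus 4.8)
The plan is to realise a suitable completely reducible representation of $\SL_2$ inside $\SL_n$, with the dimensions of the irreducible constituents prescribed by Lemma~\ref{partition}. First I would apply Lemma~\ref{partition} to $k<n$ to obtain a partition $\pi=(d_1,\dots,d_r)$ of $n$ in which no part has a prime factor exceeding $3$ — so each $d_i=2^{a_i}3^{b_i}$ — and in which $k$ is not the sum of any sub-multiset of $\{d_1,\dots,d_r\}$. It then suffices to build a homomorphism $\rho\colon\SL_2\to\SL_n$ over $K$ for which the natural $n$-dimensional $\SL_n$-module, pulled back along $\rho$, is a direct sum of absolutely irreducible $\SL_2$-modules of dimensions $d_1,\dots,d_r$; the image $H:=\rho(\SL_2)$ is by definition a very special $K$-subgroup of $\SL_n$, and I will show it has no $k$-dimensional subrepresentation.

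To construct $\rho$, I would produce for each $i$ an absolutely irreducible rational representation $\rho_i\colon\SL_2\to\GL(V_i)$, defined over the prime field, with $\dim V_i=d_i$. If $\ch K=0$, take $V_i=\Sym^{d_i-1}$ of the natural module. If $\ch K=p$, then the hypothesis forces $p\geq 3$, hence $2,3\leq p$, so that $L(1)$ (the natural module) and $L(2)$, the second symmetric power of the natural module, are absolutely irreducible of dimensions $2$ and $3$; Steinberg's tensor product theorem then yields an absolutely irreducible module of dimension $2^{a_i}3^{b_i}=d_i$, namely a tensor product of $a_i$ Frobenius twists of $L(1)$ and $b_i$ Frobenius twists of $L(2)$ taken at distinct powers of $p$. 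Put $V=V_1\oplus\cdots\oplus V_r$, of dimension $n$, and $\rho=\rho_1\oplus\cdots\oplus\rho_r\colon\SL_2\to\GL(V)\cong\GL_n$. Since $\SL_2$ is perfect, $\det\circ\rho$ is the trivial character, so $\rho$ factors through $\SL_n$; thus $H:=\rho(\SL_2)$ is a very special subgroup of $\SL_n$ defined over $K$. This construction of constituents of the prescribed dimensions is the one place where $\ch K\neq 2$ is genuinely used: in characteristic $2$ the simple $\SL_2$-modules all have $2$-power dimension, so there would be no way to match an odd $n$.

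It remains to verify that the restriction to $H$ of the natural representation of $\SL_n$ has no $k$-dimensional subrepresentation. A $k$-dimensional subrepresentation would be a $k$-dimensional $K$-subspace of $K^n$ invariant under $H$; since invariance under $H$ is equivalent, after extension of scalars, to invariance under $H_{\overline K}$, it is enough to rule out $k$-dimensional $\overline K$-subrepresentations. After base change, $V\otimes_K\overline K=\bigoplus_i(V_i\otimes_K\overline K)$ is a semisimple $\SL_{2,\overline K}$-module whose simple constituents are the absolutely irreducible modules $V_i\otimes_K\overline K$ of dimension $d_i$. Any submodule $W$ is therefore the direct sum, over the isomorphism classes of simple constituents, of a submodule of the corresponding isotypic component, and the dimension of a submodule of an isotypic component is an integer multiple $c\,d_i$ of $d_i$ with $c$ at most the multiplicity of that constituent. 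Hence $\dim W$ is the sum of a sub-multiset of $\{d_1,\dots,d_r\}$, which by the choice of $\pi$ is never $k$. The main obstacle is thus essentially the characteristic-$p$ representation theory of $\SL_2$ used in the second step; the rest — the reduction to $\overline K$, the dimension bookkeeping for submodules of a semisimple module, and the observation that $\SL_2$ being perfect forces $\rho$ into $\SL_n$ — is routine.
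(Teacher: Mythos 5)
Your proof is correct and follows essentially the same route as the paper: Lemma~\ref{partition} supplies the partition into parts of the form $2^{a}3^{b}$, Steinberg's tensor product theorem (twists of the $2$- and $3$-dimensional irreducibles) realizes irreducible $\SL_2$-constituents of those dimensions in characteristic $p\geq 3$, and the dimension bookkeeping for submodules of a semisimple module rules out a $k$-dimensional invariant subspace. The only (immaterial) difference is that in characteristic $0$ the paper simply takes the irreducible symmetric $(n-1)$st power of the natural module rather than the partition-based direct sum.
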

 
 \begin{proof}
 If $K$ is of characteristic $0$, we may take $H$ to be the image of $\SL_2$ under the symmetric $(n-1)$st power map.  We therefore assume that $p>2$.
 
By Lemma~\ref{partition}, there exists a partition $n = \pi_1+\pi_2+\cdots+\pi_r$, where each $\pi_i$ is of the form $2^{a_i} 3^{b_i}$ for non-negative integers $a_i$ and $b_i$ and $k$ is not a sum of any subsequence of terms in the sequence $\pi_1,\ldots,\pi_r$.
Let $V_2$ and $V_3$ denote, respectively, the representation space of the natural representation of $\SL_2$ and its symmetric square, which is irreducible since $p\ge 3$.  Let
$$W_i = V_2 \otimes V_2^{(p)} \otimes \cdots \otimes V_2^{(p^{a_i-1})} 
\otimes V_3 \otimes V_3^{(p)} \otimes \cdots \otimes V_3^{(p^{b_i-1})},$$
where $\SL_2(K)\to \GL(V^{(q)})$ denotes the composition of the $q$-Frobenius on $\SL_2$
with the representation $\SL_2\to \GL(V)$.  Thus $W_i$ is an irreducible representation of $\SL_2$ of degree $\pi_i = 2^{a_i}3^{b_i}$.  The direct sum of the $W_i$ is therefore a semisimple determinant $1$ representation of $\SL_2$ with no $k$-dimensional subrepresentation.
 \end{proof}

 \begin{lemma}
 \label{A irred}
 Let $n$ be a positive integer and $K$ a field.
There exists a special subgroup $H$ of $\SL_n$ 
such that the restriction of the natural representation of $\SL_n$ to $H$ is irreducible.
 \end{lemma}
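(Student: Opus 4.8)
The plan is to realize $H$ as the image of an external tensor product of natural representations, with the tensor factors dictated by a prime factorization of $n$. Concretely, first I would write $n = p_1 p_2\cdots p_m$ as a product of (not necessarily distinct) primes, which is possible for every $n\ge 1$ (with $m=0$ exactly when $n=1$). Let $V_i = K^{p_i}$ carry the natural representation of $\SL_{p_i}$, and form
$$\tau\colon \SL_{p_1}\times\cdots\times\SL_{p_m}\longrightarrow \GL\bigl(V_1\otimes_K\cdots\otimes_K V_m\bigr)=\GL_n,\qquad (g_1,\ldots,g_m)\mapsto g_1\otimes\cdots\otimes g_m.$$
I would take $H=\im\tau$. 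Since the image of a morphism of algebraic groups is closed, $H$ is a closed subgroup of $\GL_n$, and the two points to check are that $H\subseteq\SL_n$ and that the natural $n$-dimensional representation of $\SL_n$ restricts irreducibly to $H$.

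For the first point I would use the identity $\det(A\otimes B)=(\det A)^{\dim B}(\det B)^{\dim A}$ inductively: the determinant of $\tau(g_1,\ldots,g_m)$ equals $\prod_{i}(\det g_i)^{n/p_i}$, which is $1$ because each $g_i\in\SL_{p_i}$. Hence $\tau$ factors through $\SL_n$ and $H$ is a special subgroup of $\SL_n$ in the sense of the definition. For the second point, the restriction to $H$ of the natural representation of $\SL_n$ is by construction the external tensor product $V_1\boxtimes\cdots\boxtimes V_m$ of the natural representations of the $\SL_{p_i}$, viewed as a representation of the product group. Each $V_i$ is an absolutely irreducible representation of $\SL_{p_i}$, and the external tensor product of absolutely irreducible representations of the distinct direct factors of a product of algebraic groups is again absolutely irreducible; so $V_1\boxtimes\cdots\boxtimes V_m$ is irreducible over $K$, as required. (For $n=1$ the statement is essentially vacuous: $\SL_1$ is trivial, so, for instance, the image of the trivial homomorphism $\SL_2\to\SL_1$ is a special — indeed very special — subgroup whose natural representation is one-dimensional, hence irreducible.)

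The argument is elementary, and the only step deserving attention is the irreducibility of $V_1\boxtimes\cdots\boxtimes V_m$ over a field $K$ that may be neither algebraically closed nor of characteristic zero. I would settle this by extending scalars to $\overline K$, where it follows from the standard description of the irreducible rational representations of a direct product of algebraic groups as external tensor products of irreducibles of the factors, together with the fact that the natural module of each $\SL_{p_i}$ remains irreducible over $\overline K$; absolute irreducibility then descends to $K$. Note that, in contrast with Lemma~\ref{partition} and Proposition~\ref{A char not 2}, no combinatorial control over the dimensions is needed here: any prime factorization of $n$ works.
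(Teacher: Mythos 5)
Your proposal is correct and is essentially the paper's own argument: write $n$ as a product of primes and embed the corresponding product of groups $\SL_{p_i}$ into $\SL_n$ via the external tensor product of the natural representations, whose image is the desired special subgroup. The extra details you supply (the determinant identity, absolute irreducibility of the external tensor product in arbitrary characteristic, and the trivial case $n=1$) are all sound and merely flesh out what the paper leaves implicit.
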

 
\begin{proof}
Let $n=p_1^{e_1}\cdots p_l^{e_l}$, let 
$$H=\prod_{i=1}^l \SL_{p_i}^{e_i},$$
and embed $H$ in $\SL_n$ via the external tensor product of the natural representation of the $\SL_{p_i}$ factors.
\end{proof}

We shall also need a special case of a result of McLaughlin \cite{McL}. 

\begin{proposition}
\label{5.6}
Suppose that  $n\ge 2$, and $H$ is a semisimple subgroup of $G=\GL_n$ acting irreducibly on the natural representation $V$ of $G$.  
If $H$ contains a subgroup $H_1\cong \SL_2$ such that
the restriction of $V$ to $H_1$ is $V_0^{n-2}\oplus V_1$,
then $H=\Sp_n$ or $H=\SL_n$. 
\end{proposition}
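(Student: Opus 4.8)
The plan is to distill from the hypothesis the single fact that $H$ contains a \emph{transvection} of $\GL_n$ (an element $t\in\GL_n$ with $(t-1)^2=0$ and $\rk(t-1)=1$), and then to apply McLaughlin's classification \cite{McL} of irreducible linear groups generated by transvections, discarding those entries of his list that cannot occur because $H$ is a connected semisimple algebraic group over the algebraically closed field $K$. First I would fix a nontrivial unipotent element $u$ in the unipotent radical $U\cong\G_a$ of a Borel subgroup of $H_1\cong\SL_2$. Since $V|_{H_1}\cong V_0^{n-2}\oplus V_1$, the element $u$ acts on $V$ with Jordan type $(2,1^{n-2})$; thus every nontrivial element of $U$ is a transvection of $\GL_n$, and in particular $H$ contains transvections.

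Next I would reduce to the case that $H$ is generated by transvections. Let $N$ be the normal closure of $U$ in $H$. As $U$ is a connected subvariety of the connected group $H$ through the identity, $N$ is a closed connected normal subgroup of $H$; it is nontrivial and generated by transvections. After replacing $H$ by its image in $\GL(V)$ we may assume that every simple factor of $H$ acts nontrivially on $V$, and since $N$ is a connected normal subgroup of the semisimple group $H$ it is the almost-direct product of some of those factors. Pulling back along the isogeny from the product of the simple factors, $V$ becomes an external tensor product of nontrivial irreducible representations of those factors. If $N$ omitted one factor, then $V|_N$ would be a direct sum of $d\geq 2$ copies of a single irreducible $N$-module $W$, so $V\cong W\otimes K^d$ with $N$ acting through $\GL(W)\otimes 1$; but then a transvection $t\in N$ would have $\rk(t-1)=d\cdot\rk(t|_W-1)\geq 2$, which is impossible. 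Hence $N=H$, so $H$ is an irreducible subgroup of $\GL_n$ generated by transvections.

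Now I would invoke McLaughlin's theorem. A connected group cannot act imprimitively, since permuting a nontrivial system of blocks would give a nontrivial homomorphism to a finite symmetric group; so $H$ is a primitive irreducible subgroup of $\GL_n$ generated by transvections. By \cite{McL}, $H$ is then one of $\SL_n$, $\Sp_n$, $\SU_n$, an orthogonal group in characteristic $2$, or one of finitely many finite exceptional groups. The finite groups are excluded because $H$ is a positive-dimensional connected algebraic group. Over the algebraically closed field $K$ there is no nontrivial involution, so $\SU_n$ coincides with $\SL_n$. Finally, a connected group of orthogonal type lies in $\SO_n$, which contains no transvections: in characteristic $\neq 2$ the orthogonal group has no nontrivial transvections at all, and in characteristic $2$ a transvection has nonzero Dickson invariant and hence lies outside $\SO_n$. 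Therefore $H=\SL_n$ or $H=\Sp_n$, as claimed.

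I expect the main obstacle to be deploying McLaughlin's theorem in precisely the form required: pinning down the exact statement (the primitive versus imprimitive dichotomy, the list of finite exceptional groups, and the behaviour of the unitary and orthogonal families over an algebraically closed base) and checking carefully that every degenerate entry on his list fails to be a connected semisimple algebraic group over $K$. By contrast, the reductions above — producing the transvection, passing to the normal closure, and the tensor-factor rank computation — are routine.
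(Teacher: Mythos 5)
Your argument is correct and matches the paper's approach: the paper gives no proof at all, simply stating the proposition as a special case of McLaughlin's theorem, and your reduction (extracting a full transvection subgroup from $H_1$, passing to its normal closure via the tensor-factor rank count, and discarding the finite, unitary and orthogonal possibilities) supplies exactly the details left implicit. One minor imprecision: the list including $\SU_n$ and finite exceptional groups belongs to the later classifications of groups generated by individual transvections (Zalesskii--Serezhkin, Kantor, Wagner), whereas McLaughlin's cited result on irreducible groups generated by \emph{full} transvection subgroups over a field with more than three elements yields only $\SL_n$ and $\Sp_n$, so over the algebraically closed field $K$ your case eliminations, though valid, are not even needed.
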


\begin{proposition}
\label{C irred}
Let $K$ be a field and $G = \Sp_{2r}$ a symplectic group over $K$.  If $\cha K=3$, we further assume $r\ge 3$.  Then there exist special subgroups $H_1$ and $H_2$ of $G$ such that for all $k$ in $[1,2r-1]$, there exists $i$ such that
the restriction of the natural representation of $G$ to $H_i$ has no $k$-dimensional $H_i$-invariant subspace.   If $\cha K = 0$, $H_1$ alone suffices, and we may take it to be very special.
\end{proposition}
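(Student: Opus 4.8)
\emph{Strategy.} The plan is to exhibit two symplectic representations of dimension $2r$ of (products of) groups $\SL_{p_1}\times\cdots\times\SL_{p_m}$ whose sets of invariant‑subspace dimensions meet only in $\{0,2r\}$; their images $H_1,H_2$ in $\Sp_{2r}$ then have the stated property, since for every $k\in[1,2r-1]$ at least one of them has no invariant subspace of dimension $k$. In characteristic $0$ one can take $H_1$ alone: the image of $\SL_2$ under $\Sym^{2r-1}$ of the standard $2$‑dimensional representation is irreducible (hence has no invariant subspace of any dimension in $[1,2r-1]$), lands in $\Sp_{2r}$ because $2r-1$ is odd, and is very special; the same works whenever $\cha K>2r-1$.

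\emph{Positive characteristic, $p\ne 3$.} Write $2r=2^am$ with $m$ odd, $a\ge 1$. For $H_1$, take the image of $\SL_2$ acting on the direct sum of $m$ copies of an irreducible $\SL_2$‑module $U_0$ of dimension $2^a$ and of alternating (i.e.\ symplectic) type. Such a $U_0$ exists: a Frobenius‑twisted tensor power $V_2\otimes V_2^{(p)}\otimes\cdots\otimes V_2^{(p^{a-1})}$ is irreducible of dimension $2^a$, and its self‑duality type (the product of the types of its $a$ factors) is alternating exactly when $a$ is odd; when $a$ is even one adjusts the parity by including one factor $\Sym^3 V_2$ instead (of dimension $4$, irreducible for $p\ne 3$, alternating type), and in characteristic $2$ no adjustment is needed since a tensor product of alternating forms is alternating. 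Then $H_1$ is very special and its invariant subspaces are exactly those of dimension a multiple of $2^a$ at most $2r$. For $H_2$, use Lemma~\ref{A irred} to obtain a subgroup $W$ of $\SL_m$ of the required type acting irreducibly on its natural module $V$; since $m$ is odd, if $m>1$ then $V$ restricted to the $\SL_q$‑factor of $W$ (for some odd prime $q\mid m$) is nontrivial, so $V$ is not self‑dual, whence $V\oplus V^*$ is a $2m$‑dimensional symplectic $W$‑module. Letting $H_2$ be the image of $W$ on $(V\oplus V^*)^{\oplus 2^{a-1}}$, its invariant subspaces are exactly those of dimension a multiple of $m$ at most $2r$. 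Since $\gcd(2^a,m)=1$ and $\operatorname{lcm}(2^a,m)=2r$, the only dimension $\le 2r$ that is a multiple of both is $2r$, so $H_1,H_2$ are as required; if $m=1$ then $H_1$ is already irreducible and suffices.

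\emph{Characteristic $3$ and the main obstacle.} The construction above fails only for $\cha K=3$ with $a=v_2(2r)$ even, i.e.\ $r$ even and $v_2(r)$ odd (so $r\in\{2,6,8,24,\dots\}$): there $\Sym^3 V_2$ is reducible, and in fact no special group has a symplectic irreducible representation of dimension $2^a$ (any such is built from $\SL_2$‑factors alone, forcing an even number of alternating factors). This is where I expect the real work to lie, and where the hypothesis $r\ge 3$ is used — it excludes exactly $r=2$, for which the statement genuinely fails in characteristic $3$. The plan here is to take, for one of $H_1,H_2$, a direct sum of the symplectic irreducibles available in characteristic $3$ (those of dimension $2^{a'}3^{b'}$ with $a'$ odd — among them $2$, $6$, $8$) realizing a partition of $2r$ whose subset sums omit the dimension $r$, and for the other a direct sum built from hyperbolic pairs $V_q\oplus V_q^*$ with $q$ a prime $\ge 5$ together with copies of $V_2$, chosen so that the two sets of invariant‑subspace dimensions are disjoint in $[1,2r-1]$; the cases $r=6$ (take $8+2+2$ against $6+6$) and $r=8$ (take $8+8$ against $2+7+7$) exhibit the pattern. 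Carrying this out uniformly in $r$ is the principal difficulty.
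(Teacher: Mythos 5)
Your characteristic $0$ (and $\cha K\ge 2r$) argument and your $\cha K=p\notin\{0,3\}$ argument are correct, and the latter is a genuinely different route from the paper's: you split $2r=2^am$ into its $2$-part and odd part and produce two special subgroups whose invariant-subspace dimensions are, respectively, the multiples of $2^a$ and the multiples of $m$, which meet only in $\{0,2r\}$; the paper instead embeds $\prod_i\SL_{p_i}^{e_i}\subset\SL_r$ hyperbolically in $\Sp_{2r}$ (invariant dimensions $\{0,r,2r\}$), reducing everything to the single value $k=r$. Your version is clean and self-contained when an irreducible symplectic $\SL_2$-module of dimension $2^a$ exists, which, as you note, is exactly where it breaks in characteristic $3$.

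That break is a genuine gap, not a loose end: in characteristic $3$ with $v_2(2r)$ even (all $r\equiv 2\pmod 4$, plus $r=8,24,\dots$ -- note your list omits $r=10,14,18,\dots$) you have no construction, only two worked examples ($r=6$, $r=8$) and a plan ("a partition of $2r$ whose subset sums omit $r$, against a second group with disjoint invariant dimensions") that you yourself flag as the principal difficulty; nothing in the proposal makes this uniform in $r$. The missing idea is the one the paper uses to handle all $p\ge 3$ at once. After reducing to $k=r$ as above, one only needs a single irreducible \emph{symplectic} representation of $\SL_2$ whose degree lies in $[r+1,2r]$; padding by trivials then gives a $2r$-dimensional symplectic module with no $r$-dimensional invariant subspace. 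In characteristic $3$ the available symplectic irreducible degrees include all numbers $2^{2e+1}3^f$ (tensor Frobenius twists of the natural module, the symmetric square, and products of two twisted natural modules), and an elementary check shows the interval $[r+1,2r]$ contains such a number for every $r\ge 3$ except $r=8$; the case $r=8$, $p=3$ is then handled separately by $\SL_2\times\SL_3$ acting on (natural)$\boxtimes$(semisimplified adjoint), which decomposes as $2+14$ and so misses dimension $8$ -- essentially your $2+7+7$ example. Without this number-theoretic input (or some equally uniform substitute), infinitely many characteristic-$3$ cases of the proposition remain unproved, so as written the proposal does not establish the statement. (Your side assertions -- that no special group has a symplectic irreducible of dimension $2^a$ with $a$ even in characteristic $3$, and that the statement "genuinely fails" for $r=2$, $p=3$ -- are not needed for the proposition and are not justified in the proposal, but they do not affect the verdict.)
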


\begin{proof}
Let $V_i$ denote the symmetric $i$th power representation of $\SL_2$.
If $K$ has characteristic $0$ or characteristic $\ge 2r$, then we do not need $H_2$; we may take $H_1\subset \GL_{2r}$ to be the image of $\SL_2$ under the representation $V_{2r-1}$.  As $V_{2r-1}$ is irreducible and symplectic, $H_1$ may be taken to be 
a subgroup of $\Sp_{2r}$.  We therefore assume $\cha K = p$,
where $2\le p\le 2r-1$.  

If $r= p_1^{e_1}\cdots p_l^{e_l}$, then $\prod_i \SL_{p_i}^{e_i}$ embeds in $\SL_r$ by the tensor product of natural representations, and thence into $\Sp_{2r}$.  Defining $H_1$ to be the image of this representation, the restriction of the natural representation of $G$ to $H_1$
is the direct sum of two  irreducible factors of dimension $r$.  We may therefore assume, henceforth, that $k=r$.

If $\cha K=2$, then tensor products of distinct Frobenius twists of $V_1$ give irreducible self-dual representations of $\SL_2$ of every degree in the set $\{1,2,4,8,\ldots\}$ and, in particular, one whose degree lies in $[r+1,2r]$.
Adding a trivial representation of suitable degree, we obtain a self-dual representation $V$ of degree $2r$ which has no invariant $k$-dimensional subspace.  Since $\cha K=2$, the image $H_1$ of $\SL_2$ in this representation can be taken to be in $\Sp_{2n}$.  We may therefore assume $p\ge 3$.  Since we are excluding the case $(p,r) = (3,2)$, we may assume $r\ge 3$.

We divide into cases.

\vskip 3pt
\noindent \textbf{Case $r\neq 8$.}
We know $V_1$ is  irreducible and symplectic of degree 2, $V_2$ is  irreducible and orthogonal of degree $3$, and the tensor product of two distinct Frobenius-twists of $V_1$ is  irreducible and orthogonal of degree $4$.
By tensoring suitable Frobenius twists of these three representations, we can therefore find an irreducible symplectic representation of $\SL_2$ of any degree of the form $2^{2e+1} 3^f$.  Our assumptions on $r$ guarantee there exists a number of this form in
the interval $[r+1,2r]$.   Therefore, there exists a symplectic,  irreducible representation of $\SL_2$ whose degree lies in this interval.  The direct sum of this irreducible representation with a trivial representation of suitable degree
is then a symplectic representation of $\SL_2$ of degree $2r$, and defining $H_2$ to be the image of $\SL_2$ in this representation, the proposition follows in this case.
\vskip 3pt
\noindent \textbf{Case $r = 8$, $p\ge 5$.}
Let $W$ denote the adjoint representation of $\SL_3$, which is orthogonal and irreducible.  Then $V_1\boxtimes W$ is a $16$-dimensional irreducible symplectic representation of $\SL_2\times \SL_3$, so we may take $H_2$ to be the image of this representation.  (In fact, in this case, we do not need $H_1$.)
\vskip 3pt
\noindent \textbf{Case $r = 8$, $p=3$.}  Let $W$ denote the semisimplification of the adjoint representation of $\SL_3$, which is orthogonal and the direct sum of a trivial $1$-dimensional representation and an  irreducible $7$-dimensional representation.  Then $V_1\boxtimes W$ is a $16$-dimensional symplectic representation of $\SL_2\times \SL_3$ which decomposes into irreducible factors of degree $2$ and $14$, and we let $H_2$ denote the image of this representation.

\end{proof}

\begin{proposition}
\label{B irred}
Let $K$ be a field not of characteristic $2$ and $G = \SO_{2r+1}$, $r\ge 3$.  Then there exist special subgroups $H_1,H_2,H_3$ of $G$ such that for all $k$ in $[1,2r]$, there exists $i$ such that
the restriction of the natural representation of $G$ to $H_i$ has no $k$-dimensional $H_i$-invariant subspace.  If $\cha K = 0$, $H_1$ alone suffices, and we may take it to be very special.
\end{proposition}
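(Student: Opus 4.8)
The plan is to follow the template of Propositions~\ref{A char not 2} and \ref{C irred}. The key structural remark, special to odd orthogonal groups, is that if a subgroup $H\le\SO_{2r+1}$ preserves a subspace $W$ of the natural module it also preserves $W^\perp$, which has the complementary dimension; hence $H$ has an invariant subspace of dimension $k$ if and only if it has one of dimension $2r+1-k$, and it suffices to rule out, for each $k\in[1,r]$, an invariant subspace of dimension $k$ for one of the $H_i$. When $\cha K=0$, or more generally $\cha K\ge 2r+1$, this is immediate: take $H_1$ to be the image of $\SL_2$ under the symmetric power $V_{2r}$, which is irreducible of dimension $2r+1$ and carries an invariant symmetric form, hence maps into $\SO_{2r+1}$, is very special, and has no invariant subspace of any intermediate dimension. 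So from now on assume $\cha K=p$ with $3\le p\le 2r-1$.

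In positive characteristic $H_1$ comes from a factorization $r=p_1^{e_1}\cdots p_l^{e_l}$: embed $\prod_i\SL_{p_i}^{e_i}$ in $\GL_r$ by the external tensor product of natural representations, acting irreducibly on an $r$-dimensional space $U$, and then embed $\GL_r$ in $\SO_{2r+1}$ in the standard way, so that $U$ and $U^*$ become dual maximal isotropic subspaces and the natural $(2r+1)$-dimensional module restricts to $H_1$ as $U\oplus U^*\oplus\mathbf 1$ with $U$ irreducible of dimension $r\ge 3$. Then every invariant subspace has dimension in $\{0,1,r,r+1,2r,2r+1\}$, so $H_1$ disposes of every $k\in[1,2r]$ except $k\in\{1,r,r+1,2r\}$. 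By the duality remark, it remains only to build $H_2$ ruling out $k=1$ (hence $k=2r$) and $H_3$ ruling out $k=r$ (hence $k=r+1$).

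For $H_3$ it is enough to produce an irreducible orthogonal representation $V'$ — of a product of copies of $\SL_2$, or, in sporadic cases, of a group involving $\SL_3$ — of some dimension $d$ with $r+2\le d\le 2r$, and to let $H_3$ be the image of $V'\oplus\mathbf 1^{\,2r+1-d}$; then the invariant-subspace dimensions lie in $\{0,\dots,2r+1-d\}\cup\{d,\dots,2r+1\}$, a set containing neither $r$ nor $r+1$. For $H_2$ it is enough to realize the $(2r+1)$-dimensional orthogonal module as a direct sum of nontrivial irreducible summands, hence with no fixed line: either a single irreducible orthogonal representation of dimension $2r+1$ when one exists, or a sum of two nontrivial irreducible orthogonal representations of complementary dimensions, using small building blocks such as $V_1\otimes V_1^{(p)}$ (dimension $4$) or, when $p=3$, the $7$-dimensional irreducible constituent of the adjoint representation of $\SL_3$. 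All the representations needed here are assembled as tensor products of Frobenius twists of $V_1$, $V_2$ and of small $\SL_3$-modules, so that their dimensions — products of odd integers at most $p$, or multiples of $4$ — can be placed in the interval $(r,2r]$; carrying this out requires a finite case analysis in the spirit of the ``$r\neq 8$'' versus ``$r=8$'' dichotomy in the proof of Proposition~\ref{C irred}.

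The main obstacle is exactly this last step: proving that for every $r\ge 3$ and every $p\in[3,2r-1]$ the required irreducible orthogonal representations of the prescribed dimensions actually exist, and dealing with the finitely many exceptional pairs $(p,r)$ — above all in characteristic $3$, where the attainable dimensions are sparse — in which an $\SL_2$-construction must be replaced by an $\SL_3$-construction, or in which some of the $H_i$ become unnecessary, just as $H_1$ is unnecessary for $\Sp_{16}$ in Proposition~\ref{C irred}.
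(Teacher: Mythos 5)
Your reduction is the same as the paper's: the characteristic $0$ (or $p\ge 2r+1$) case via $V_{2r}$, the duality $k\leftrightarrow 2r+1-k$, and the subgroup $H_1$ obtained from $\prod_i\SL_{p_i}^{e_i}\subset\GL_r\subset\SO_{2r+1}$, whose invariant subspaces have dimensions in $\{0,1,r,r+1,2r,2r+1\}$, all match. But the proposal stops exactly where the real content lies: you never construct $H_2$ and $H_3$ for general $(p,r)$, and you say yourself that this existence question is ``the main obstacle.'' This is a genuine gap, not a routine verification, and your specific framing of $H_2$ would in fact fail as stated: requiring either a single irreducible orthogonal module of dimension $2r+1$ or a sum of \emph{two} nontrivial irreducible orthogonal summands of complementary dimensions is impossible, for instance, for $\SO_{17}$ in characteristic $3$ (no special subgroup has an irreducible orthogonal module of dimension $17$, and none of the splittings $3+14$, $7+10$, $9+8$, $13+4$, $5+12$ can be realized by irreducible orthogonal modules of special groups in characteristic $3$). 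Likewise your restriction of the big summand of $H_3$ to dimensions in $[r+2,2r]$ rather than $[r+2,2r+1]$ already forces extra ad hoc work at $r=4$, where the natural choice has dimension $9=2r+1$.

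The paper closes this step with two uniform devices. For $k\in\{1,2r\}$ it takes $H_2=\SO_3^e\times\SO_4^f$ with $3e+4f=2r+1$ (solvable for every odd integer $\ge 7$); since $V_2$ and $V_1\boxtimes V_1$ are irreducible for all $p\ge3$, no composition factor is trivial, and by allowing arbitrarily many summands the existence problems above never arise. For $k\in\{r,r+1\}$ and $r\ge4$ it chooses the smallest integer of the form $3^e4^f$ exceeding $r+1$ (the least element of $\{9\cdot4^f\}\cup\{12\cdot4^f\}\cup\{16\cdot4^f\}\cup\{27\cdot4^f\}$ beyond $r+1$), which automatically lies in $[r+2,2r+1]$, and realizes it as an irreducible orthogonal $\SL_2$-module by a Steinberg tensor product of Frobenius twists of $V_2$ and of pairs of twists of $V_1$; this works simultaneously for every $p\ge3$, so the characteristic-by-characteristic analysis you anticipate never materializes. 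Only $r=3$ needs special treatment ($V_4$ when $p\ge5$, and the $7$-dimensional quotient of the adjoint representation of $\SL_3$ when $p=3$). So your strategy is the right one, but the decisive constructions are missing, and the one you sketch for $H_2$ needs to be replaced by the many-summand version to be correct.
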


\begin{proof}
If $K$ has characteristic $0$ or characteristic $\ge 2r+1$, then we do not need $H_2$ or $H_3$.  We may take $H_1\subset \GL_{2r+1}$ to be the image of $\SL_2$ in the representation $V_{2r}$.  As $V_{2r}$ is irreducible and orthogonal, $H_1$ may be taken to be 
a subgroup of $\SO_{2r+1}$.  We therefore assume $\cha K = p$,
where $3\le p\le 2r$.  

If $r= p_1^{e_1}\cdots p_l^{e_l}$, then $\prod_i \SL_{p_i}^{e_i}$ embeds in $\SL_r$ by the tensor product of natural representations, and thence into $\SO_{2r}\subset \SO_{2r+1}$.  Defining $H_1$ to be the image of this representation in $\SO_{2r+1}$, the restriction of the natural representation of $G$ to $H_1$
is the direct sum of a $1$-dimensional trivial factor and two  irreducible factors of dimension $r$.  We may therefore assume, henceforth, that $k\in\{1,r,r+1,2r\}$.
There is a special subgroup of $\SO_{2r+1}$ of the form $H_2=\SO_3^e \times \SO_4^f$, where $3e+4f=2r+1$, and no composition factor of the restriction of the natural representation of $G$  to $H_2$ has dimension $1$, so we may assume that $k\in \{r,r+1\}$.
It therefore suffices to find $H_3$ such that the restriction of the natural representation to $H_3$ has an irreducible factor of degree $\ge r+2$.

If $r=3$ and $p=3$, we let $H_3$ be the image of $\SL_3$ under the (irreducible, orthogonal) quotient of the adjoint representation by its $1$-dimensional trivial subrepresentation.
If $r=3$ and $p\ge 5$, we let $H_3$ denote the image of $\SL_2$ under $V_4$, which is, again,  irreducible and orthogonal.

If $r\ge 4$, then there exists an integer of the form $3^e 4^f$ in the interval $[r+2,2r+1]$.  Indeed, we take the smallest integer in the set
$$\{9\cdot 4^f\mid f\in\N\}\cup \{12\cdot 4^f\mid f\in\N\}\cup\{16\cdot 4^f\mid f\in\N\}\cup \{27\cdot 4^f\mid f\in\N\}$$
which exceeds $r+1$.  We can therefore find an  irreducible, orthogonal representation of $\SL_2$ by tensoring together $e$ suitable Frobenius twists of $V_2$ and $f$ suitable twists of $V_1$.  Adding a trivial factor of suitable dimension, we obtain an orthogonal representation which has no subrepresentation of degree $r$ or $r+1$; we take $H_3$ to be the image of $\SL_2$ under such a representation.

\end{proof}

\begin{proposition}
\label{D irred}
Let $K$ be a field and $G = \SO_{2r}$, $r\ge 4$.  Then there exist special subgroups $H_1$ and $H_2$ of $G$ such that for all $k$ in $[1,2r-1]$, there exists $i$ such that
the restriction of the natural representation of $G$ to $H_i$ has no $k$-dimensional $H_i$-invariant subspace.    If $\cha K = 0$, we may take $H_1$ and $H_2$ to be very special.
\end{proposition}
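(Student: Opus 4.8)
The plan is to produce $H_1$ handling every $k\neq r$ and $H_2$ handling $k=r$; recall that the natural representation of $\SO_{2r}$ is self-dual, so the orthogonal complement of an invariant subspace is again invariant and the set of dimensions of invariant subspaces is symmetric under $k\mapsto 2r-k$ (and always contains $0$ and $2r$), which is why it is enough to treat the middle dimension separately.

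For $H_1$ I would argue as in Lemma~\ref{A irred}: factor $r=p_1^{e_1}\cdots p_l^{e_l}$, let $W$ be the external tensor product of the natural representations of the factors of $\prod_i\SL_{p_i}^{e_i}$ (irreducible of dimension $r$), and embed this product in $\SO_{2r}$ via $W\oplus W^*$. The point is that $W\oplus W^*$ carries the tautological nondegenerate quadratic form $q(w,\varphi)=\varphi(w)$, which is invariant in every characteristic --- so this works even when $\cha K=2$ --- and has determinant $1$; moreover its only invariant subspaces have dimension $0$, $r$ or $2r$ (namely $0,W,W^*,W\oplus W^*$ when $W\not\cong W^*$, and $W\otimes(\text{a subspace of }K^2)$ when $W\cong W^*$, in both cases by Schur's lemma). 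Hence $H_1$ has no invariant subspace of dimension $k$ for any $k\in[1,2r-1]\setminus\{r\}$.

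It then remains to find a single special subgroup $H_2$ whose natural-representation-restriction is a $2r$-dimensional orthogonal representation with no $r$-dimensional invariant subspace, and for this it suffices to exhibit an orthogonal (quadratic-type) irreducible representation $U$ of some $\SL_{p_1}\times\cdots\times\SL_{p_m}$ with $r<\dim U\leq 2r$: the image $H_2$ of $U\oplus\mathbf 1^{\oplus(2r-\dim U)}$ then has all invariant-subspace dimensions in $\{0,\dots,2r-\dim U\}\cup\{\dim U,\dots,2r\}$, none of which equals $r$ since $\dim U>r>2r-\dim U$. (Failing a suitable $U$, one can instead use $U^{\oplus 2r/d}$ for an orthogonal irreducible of dimension $d\mid 2r$ with $d\nmid r$, or more generally a direct sum of orthogonal irreducibles with dimensions summing to $2r$ and no subset summing to $r$.) In characteristic $0$ I would instead take both subgroups very special: $H_1$ the image of $\SL_2$ under $V_{2r-2}\oplus V_0$ and $H_2$ the image of $\SL_2$ under $V_{2r-4}\oplus V_2$ (legitimate since $2r-4\geq 4>2$), both orthogonal of dimension $2r$; their forbidden dimensions $\{1,2r-1\}$ and $\{3,2r-3\}$ are disjoint because $r\geq 4$, so together they cover $[1,2r-1]$.

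The real work is constructing $U$ in positive characteristic $p$, and I would split on the size of $p$. If $p>r$ one simply takes $U=V_d=L(d)$ with $d$ even and $r<d+1\leq 2r$ (such $d$ exists since $r\geq 4$; one checks $d<p$, taking $d=2r-2$ if $p$ is large), and this is irreducible and orthogonal. If $p\leq r$ one must build $U$ from Steinberg tensor products: over $\SL_2$ the orthogonal irreducibles are exactly the $\bigotimes_j L(\lambda_j)^{(p^{i_j})}$ with an even number of odd $\lambda_j$, of dimension $\prod_j(\lambda_j+1)$, which already realizes many dimensions in $(r,2r]$ (for instance when $p=2$ the dimensions $\{4^a\}$, augmented by the $\SL_3$-adjoint representation of dimension $8$ and its tensor products to reach every power of $2$ that is $\geq 8$, which suffices because the least power of $2$ exceeding $r$ lies in $[8,2r]$ for $r\geq 4$). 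The genuinely delicate cases are $p\in\{2,3\}$ with $r$ small: here the available orthogonal irreducibles are too sparse, and I would bring in the adjoint representation of $\SL_3$ --- irreducible and orthogonal of dimension $8$ when $p\neq 3$, and of dimension $7$ after removing its trivial subrepresentation when $p=3$ --- together with external tensor products such as $V_1\boxtimes(\text{adjoint of }\SL_3)$, disposing of the finitely many remaining pairs $(p,r)$ by hand (e.g. $H_2=\SL_3$ via $(\text{adjoint quotient})\oplus\mathbf 1$ handles $p=3$, $r=4$, with invariant dimensions $\{0,1,7,8\}$). I expect the main obstacle to be twofold: isolating a clean finite list of exceptional $(p,r)$ and verifying each, and --- in characteristic $2$ --- ensuring throughout that the representations feeding $H_2$ are of quadratic type rather than merely symplectic, which is automatic for the $W\oplus W^*$ construction of $H_1$ but must be justified for the $\SL_3$-adjoint and the twisted tensor products.
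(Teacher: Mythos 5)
Your proposal is correct and takes essentially the same approach as the paper: the same two very special $\SL_2$-subgroups (via $V_{2r-2}\oplus V_0$ and $V_{2r-4}\oplus V_2$) in characteristic $0$, the same $H_1$ coming from $\prod_i\SL_{p_i}^{e_i}$ acting on $W\oplus W^*$ to reduce to $k=r$, and the same mechanism for $k=r$, namely an irreducible orthogonal representation of a special group of dimension in $(r,2r]$ padded by trivials (the paper reuses the $H_3$ of Proposition~\ref{B irred}, based on dimensions $3^e4^f$, for all $p\ge 3$, and twisted tensor products of the natural $\SL_2$-module in characteristic $2$). The loose ends you flag do close: the only exceptional pair is $(p,r)=(3,4)$, which you treat exactly as the paper does via the $7$-dimensional orthogonal $\SL_3$-module, and quadratic type in characteristic $2$ for your building blocks follows from the canonical invariant quadratic form on the tensor product of two symplectic modules (which also shows the paper's stronger claim that any $e$-fold twisted tensor product of natural modules, $e\ge 2$, is orthogonal).
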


\begin{proof}
If $\cha K = 0$ or if $\cha K = p$ is at least $2r-1$, then $V_{2r-2}$ is  irreducible and orthogonal, and it maps $\SL_2$ to $\SO_{2r-1}$ and therefore to $\SO_{2r}$.  The restriction of the natural representation of $\SO_{2r}$ to the image $H_1$ of this homomorphism decomposes as the sum of a $1$-dimensional trivial representation and an  irreducible representation of dimension $2r-1$.  Let $H_2$ denote the image of $\SL_2$ under the representation $V_2\oplus  V_{2r-4}$.
The only dimensions of non-trivial invariant subspaces of $H_2$ are $3$ and $2r-3$.
We may therefore assume that $2\le p\le 2r-2$.

If $r= p_1^{e_1}\cdots p_l^{e_l}$, then $\prod_i \SL_{p_i}^{e_i}$ embeds in $\SL_r$ by the tensor product of natural representations, and thence into $\SO_{2r}$.   
Defining $H_1$ to be the image of this representation in $\SO_{2r}$, the restriction of the natural representation of $G$ to $H_1$
is the direct sum of  two irreducible factors of dimension $r$.  We may therefore assume, henceforth, that $k=r$.  

If $\cha K = 2$, there exists $2^e \in[r+1,2r]$, with $e\ge 2$.  Taking a tensor product of $e$ Frobenius twists of the natural representation of $\SL_2$, we obtain an irreducible, orthogonal representation of $\SL_2$ of degree $2^e$ and therefore an orthogonal representation of $\SL_2$ of dimension $2r$ with no $r$-dimensional inveriant subspaces. 

If $p\ge 3$, we use the construction of $H_3$ for $\SO_{2r-1}$ in Proposition~\ref{B irred}, and define $H_2$ for $\SO_{2r}$ to be the image of this group under the embedding $\SO_{2r-1}\subset \SO_{2r}$.
\end{proof}

\begin{theorem}
\label{generating sets}
Let $G$ be a simple algebraic group over an algebraically closed field $K$.  
\begin{enumerate}
\item[(1)] Except in the case that $G$ is of type $C_2$ and $\cha K=3$, $G$ contains a finite generating set of special subgroups. 
\item[(2)] If $G = \SL_n$ and $\cha K\neq 2$, then $G$ has a very special generating set.
\item[(3)] If $G$ is an orthogonal or symplectic group and $\cha K = 0$, then $G$ has a very special generating set.
\item[(4)] We can choose one element $H_i$ of a generating set so that $H_i\times H_i \not\prec G$.\end{enumerate}
\end{theorem}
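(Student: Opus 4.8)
The plan is to exhibit explicit generating sets and verify the four assertions in turn. By the isogeny remark preceding the statement we may assume $G$ is simple and simply connected, and since $H_i\prec H$ depends only on $H^{\ss}=(H^\circ)^{\ss}$ we may always replace a candidate overgroup $H$ by its identity component; so it is enough to show that a connected proper $H\le G$ cannot satisfy $H_i\prec H$ for all $i$. The guiding principle is: if for every proper connected maximal subgroup $M$ of $G$ we can find a special subgroup $H_M$ with $H_M\not\prec M$, then $\{H_M\}_M$ is generating, since a connected proper $H$ lies in some such $M$ and then $H_M\prec H$ would force $H_M\prec M$.

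For the classical groups $G\in\{\SL_N,\Sp_N,\SO_N\}$ it is cleaner to work with the natural module $V$ of dimension $N$. Propositions \ref{A char not 2}, \ref{C irred}, \ref{B irred} and \ref{D irred} furnish finitely many special subgroups — very special when $\cha K=0$, or when $G=\SL_N$ and $\cha K\ne2$ — whose restrictions to $V$ are semisimple and such that for every $k\in[1,N-1]$ some member has no $k$-dimensional invariant subspace. First I would show that if $H$ is connected with all of these $\prec H$, then $V|_H$ is irreducible: a $k$-dimensional invariant subspace would descend to one in the semisimplification of $V|_{H^{\ss}}$, and hence — using compatibility of $\prec$ with $\rho^{\ss}$ and the fact that the relevant restrictions are already semisimple — to a $k$-dimensional invariant subspace for one of the members, contrary to its choice. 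Then $H$ is reductive, and since the centre of $G$ is finite, $H$ is in fact semisimple. To force $H=G$: for $G=\SL_N$ or $\Sp_N$ I would enlarge the collection by the very special subgroup $H_0$, the image of $\SL_2$ under $V_1\oplus V_0^{N-2}$; then $H_0\prec H$ gives an $\SL_2\hookrightarrow H$ acting on $V$ as $V_0^{N-2}\oplus V_1$ (the extension splits, as the pertinent $\mathrm{Ext}^1$ groups vanish), so Proposition \ref{5.6} forces $H=\SL_N$ or $\Sp_N$ inside $\GL_N$. If $G=\Sp_N$ this gives $H=\Sp_N$ (as $\SL_N\not\subseteq\Sp_N$); if $G=\SL_N$ with $N$ odd it gives $H=\SL_N$ (no odd-dimensional symplectic group); and if $G=\SL_N$ with $N$ even I would adjoin one more very special subgroup whose restriction to $V$ is orthogonal but admits no invariant alternating form — e.g. the image of $\SL_2$ under $V_2\oplus V_0^{N-3}$, where the nontrivial orthogonal constituent $V_2$, of odd multiplicity, obstructs any invariant alternating form — so this subgroup is $\not\prec\Sp_N$ and $H=\SL_N$ again. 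For $G=\SO_N$ an irreducible connected $H$ can still be proper (e.g. $\SL_r$ together with the graph automorphism acting on $W\oplus W^*$, or $G_2\subseteq\SO_7$), so I would adjoin an $\SL_2$ acting as $V_2\oplus V_0^{N-3}$ (a long root element of $\SO_N$, of Jordan type $(3,1^{N-3})$ on $V$) and conclude $H=\SO_N$ by the orthogonal analogue of Proposition \ref{5.6}, the classification of irreducible subgroups of $\SO_N$ containing a long root element. Parts (2) and (3) then reduce to observing that the subgroups adjoined above are very special in the stated ranges, and this also settles (1) for the classical types, with the low-rank coincidences subsumed under other types and the excluded $C_2$, $\cha K=3$ case matching the exclusion in Proposition \ref{C irred}; characteristic $2$ requires a separate treatment using McLaughlin's classification of transvection groups and Frobenius-twist constructions.

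For the exceptional groups (and the large reductive maximal subgroups of classical groups, where the module argument does not locate $H$) I would appeal to the classification of maximal connected subgroups of simple algebraic groups. When $\cha K=0$ or $\cha K>h$, where $h$ is the Coxeter number, the principal $\SL_2$ is a very special subgroup and the adjoint module $\mathfrak g$ restricted to it has $V_{2h-2}$ (of dimension $2h-1$) as a constituent; running through the finite list of maximal connected subgroups $M$ and bounding the dimensions of the $\SL_2$-constituents appearing in $\mathfrak g|_M$, one checks that no proper $M$ has an $\SL_2$-subgroup on which $\mathfrak g$ restricts with a constituent this large, so $\mathrm{prin}\,\SL_2\not\prec M$ for every proper $M$ and $\{\mathrm{prin}\,\SL_2\}$ is already a generating set of very special subgroups. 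In small characteristic, where the principal $\SL_2$ degenerates, I would build a generating set out of subsystem subgroups of type $\SL_{p_1}\times\cdots\times\SL_{p_m}$ (together with the classical generating sets of Propositions \ref{A char not 2}--\ref{D irred} applied to classical subsystem subgroups), chosen so that no proper maximal subgroup contains all of them up to $\prec$; the failure of this in type $C_2$, $\cha K=3$ is precisely the stated exception. I expect the main obstacles to be these small-characteristic exceptional cases, which demand a case-by-case confrontation with the maximal-subgroup tables, and supplying/citing the orthogonal analogue of Proposition \ref{5.6}.

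For (4): since adjoining extra special subgroups preserves the generating property, it suffices to include in the generating set one special subgroup $H_i$ with $\rk H_i>\tfrac12\rk G$, for then $H_i^{\ss}\times H_i^{\ss}$ has rank $>\rk G$ and cannot embed in $G$ at all, a fortiori $H_i\times H_i\not\prec G$. Such $H_i$ are readily available: for $G=\SL_N$ with $N$ prime take $H_i=G$; for general classical $G$ take a block-diagonal product of copies of $\SL_2$ (and, where convenient, of $\SL_p$ for suitable primes $p$) of rank exceeding $\tfrac12\rk G$; and for exceptional $G$ take a maximal-rank subsystem subgroup that is a product of $\SL_p$'s, such as $A_1\tilde A_1\subseteq G_2$, $A_2\tilde A_2\subseteq F_4$, $A_2^3\subseteq E_6$, $A_1^7\subseteq E_7$, or $A_4A_4\subseteq E_8$. (Alternatively, when the generating set is $\{\mathrm{prin}\,\SL_2\}$, one may use that the principal $\SL_2$ is a maximal connected subgroup, so $\mathrm{prin}\,\SL_2\times\mathrm{prin}\,\SL_2$ cannot embed in $G$ with its first factor acting as the principal $\SL_2$.)
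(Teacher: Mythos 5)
Your handling of $\SL_n$ (part (2)) and of the symplectic case runs along the paper's own lines (irreducibility from Propositions~\ref{A char not 2} and \ref{C irred}, then the very special subgroup coming from $V_0^{n-2}\oplus V_1$ and McLaughlin via Proposition~\ref{5.6}, with $V_0^{n-3}\oplus V_2$ eliminating $\Sp_n$), and your part (4) idea is essentially the paper's rank trick. But two key steps would fail as written. First, the exceptional groups: a one-element collection $\{\mathrm{prin}\,\SL_2\}$ can never be generating, because by definition $H=\mathrm{prin}\,\SL_2$ is itself a proper subgroup with $\mathrm{prin}\,\SL_2\prec H$; moreover your claim that no proper maximal $M$ carries an $\SL_2$ with a constituent of dimension $2h-1$ on $\mathfrak{g}$ is false -- e.g.\ $F_4\subset E_6$ contains a regular unipotent of $E_6$, hence (in characteristic $0$) a principal $\SL_2$ of $E_6$, so $\mathrm{prin}\,\SL_2\prec F_4$. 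Your small-characteristic exceptional case is only a statement of intent; this is precisely where the paper does substantial work, choosing for each exceptional type a maximal-rank special subgroup $H_1$ that is maximal connected and rigid among the maximal subgroups listed in \cite{lieseitz}, together with an $H_2$ not embeddable in $H_1$, and giving a separate argument for $E_7$.

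Second, the orthogonal endgame rests on ``the orthogonal analogue of Proposition~\ref{5.6}, the classification of irreducible subgroups of $\SO_N$ containing a long root element,'' applied on the grounds that the unipotent of your $\SL_2$ acting as $V_2\oplus V_0^{N-3}$ is a long root element of Jordan type $(3,1^{N-3})$. That identification is wrong: long root elements of $\SO_N$ have Jordan type $(2,2,1^{N-4})$, and a $(3,1^{N-3})$-element is a short root element in odd dimension and not a root element at all in type $D$, so the classification you invoke does not apply (you yourself flag supplying this analogue as an obstacle). The paper needs no such result: for part (1) it forces the overgroup to have rank $r$ (resp.\ $\ge r-1$) using $\SL_2^r$, resp.\ products of $\SO_4$'s, and then quotes the classification of maximal-rank semisimple subgroups, while for part (3) it uses maximality of the principal $\SL_2$ (treating $G_2\subset\SO_7$ separately) together with a second very special subgroup such as $V_0^2\oplus V_{2r-2}$ or $V_2\oplus V_{2r-4}$. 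Finally, part (1) for classical groups in characteristic $2$ (including type $A$) is deferred in your proposal to an unspecified ``separate treatment,'' whereas the paper covers it with Lemma~\ref{A irred}, the Frobenius-twist constructions inside Propositions~\ref{C irred} and \ref{D irred}, and the same rank argument (ruling out $\SO_{2r}\subset\Sp_{2r}$ directly). These omissions -- the exceptional types, the orthogonal conclusion, and characteristic $2$ -- are genuine gaps, not routine details.
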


\begin{proof}
As the statement does not depend on isogeny class, we assume $G$ is simply connected, except for orthogonal types, where we assume $G$ is a special orthogonal group.  
In each case, we give a finite set of special subgroups $H_i$ such that the only subgroup $H$ of $G$ satisfying $H_i\prec H$ for all $i$ is $G$ itself.
The $H_i$ are special and therefore semisimple, so if $H$ is a reductive and $H_i\prec H$ for all $i$, then the same is true of the derived group of $H^\circ$, so if $H$ is reductive, we may assume it is semisimple.

We consider each of the possible types:
\vskip 3pt
\noindent \textbf{Case $A_r$, $r\ge 1$.}  By Lemma~\ref{A irred} and in view of Corollary \ref{cor:pv}, there exists a special subgroup $H_1\subset G=\SL_{r+1}$ such that that any subgroup $H\subset G$ with  $H_1\prec H$ acts irreducibly on the natural representation $V$,
so it is reductive and may therefore be assumed to be semisimple.  If $r=1$, this implies $H=\SL_2$, so we may assume $r\ge 2$.
If $r\in\{2k-1,2k\}$, let $H_2 = \SL_2^k$ with the standard embedding to $G$.  By Theorem~\ref{degeneration}, the rank of any subgroup $H$ such that $H_2\prec H$ is at least $k$.

Let $\tilde H$ denote the simply connected covering group of $H$ and 
$$\tilde H = L_1\times \cdots\times L_l,$$
where the $L_i$ are simple.  If $r_i$ denotes the rank of $L_i$, then 
$r_1+\cdots+r_l \ge k$.  As $V$ is the exterior tensor product of almost faithful representations of the $H_i$,
$$r+1\ge   (r_1+1)\cdots (r_l+1).$$
The last two conditions imply that either $l=1$, or that $l=2$ and $r_1=1$, $r_2=k-1$. If $l=1$, $H$ is simple and must be $\SO_{r+1}$ or $\Sp_{r+1}$ (cf. \cite[Theorem 5.1 and Cor. 5.2]{lubeck} or \cite{liebeck}). If $l=2$ and $r_1=1$, the first factor of $H$ is $\SL_2$ with the natural module. So $\dim V$ is even and $r=2k-1$. Also the second factor has rank $k-1$ with an irrep of dimension $k$, so must be $\SL_k$ with its natural module. However $H_2=A_1^k$ does not embed in $\SL_2 \times \SL_k$, unless $k=2$. Therefore $r_2=1$, $r=3$, and after extension of scalars, $H = \SO_4$.

Let $H_3=\SL_3$, embedded in $\SL_{r+1}$ by any representation which is not self-dual.  Then there is no homomorphism $\SL_3\to \SO_{r+1}$ or $\SL_3\to \Sp_{r+1}$
consistent with the embeddings of $H_3$, $\SO_{r+1}$, and $\Sp_{r+1}$ in $\SL_{r+1}$, so the theorem holds in this case.

\vskip 3pt
\noindent \textbf{Case $B_r$, $r\ge 3$, $\cha K\neq 2$.}  
Let $H_1$, $H_2$, and $H_3$ be as in Proposition~\ref{B irred}, and let $H_4 = \SL_2^r$, which embeds as a subgroup of $G=\SO_{2r+1}$ since $G$ is always of the form $\SO_3\times \SO_4^{\frac{r-1}2}$
or $\SO_5\times \SO_4^{\frac{r-2}2}$.  By Proposition~\ref{B irred}, any group $H$ satisfying $H_i\prec H$
for all $i\in \{1,2,3\}$ acts  irreducibly on the natural representation of $G$, so it is reductive and we may therefore assume it is semisimple.
As $H_4\prec H$, $H$ must have rank $r$.  From the classification of equal rank semisimple subgroups of simple groups, this implies $H = G$.

\vskip 3pt
\noindent \textbf{Case $C_r$, $r\ge 2$, $\cha K\neq 3$ if $r=2$.}  
Let $H_1$ and $H_2$ be as in Proposition~\ref{C irred}, and let $H_3 = \SL_2^r$,  embedded as a subgroup of $G=\Sp_{2r}$ in the obvious way.  By Proposition~\ref{C irred}, any group $H$ satisyfing $H_1\prec H$ and $H_2\prec H$ acts  irreducibly on the natural representation of $G$, so $H$ is reductive and can be assumed semisimple.  As $H_3\prec H$, $H$ is of rank $r$.  If $\cha K\neq 2$, then  the classification of equal rank subgroups of simple groups implies $H = G$.
If $\cha K=2$, there is an additional possibility: $H=\SO_{2r}\subset \Sp_{2r}$.  However, $\SO_{2r}\subset \Sp_{2r}$ does not contain a subgroup for which the restriction of the natural representation of $\Sp_{2r}$ decomposes into $r$ irreducible $2$-dimensional factors, so this possibility is ruled out.

\vskip 3pt
\noindent \textbf{Case $D_r$, $r\ge 4$.}  
Let $H_1$ and $H_2$ be as in Proposition~\ref{D irred}.  If $r$ is even, let $H_3 = \SO_4^{r/2}$,  embedded as a subgroup of $G=\SO_{2r}$ in the obvious way; if $r$ is odd, let $H_3 = \SO_4^{(r-1)/2}$.  By Proposition~\ref{D irred}, any group $H$ satisfying $H_1\prec H$ and $H_2\prec H$ acts  irreducibly on the natural representation of $G$, so we may assume $H$ is semisimple of rank $\ge r-1$. If $H$ is of rank $r$, then $H = G$ follows from the classification of equal rank subgroups,
We may therefore assume $H$ is of rank $r-1$, which means $r$ is odd and therefore at least $5$.

If the universal covering group $\tilde H$ has simple factors of ranks $r_1\ge \cdots\ge r_l$, then 
$$r_1+\cdots+r_l = r-1$$
and
$$(2r_1)\cdots (2r_l) \le 2r,$$
since the minimum dimension of a self-dual representation of a simple group of rank $r_i$ is $2r_i$.
The only solutions for these two condition for $l\ge 2$ are $(r_1,r_2) = (2,1)$ (for $l=2$) and $r_1=r_2=r_3=1$ (for $l=3$).  However, in neither case is $n\ge 5$.  Therefore $H$ is simple, but a simple group of rank $r-1$ cannot have a self-dual representation of dimension $2r$ for $r\ge 5$.

\vskip 3pt
\noindent \textbf{Case $G$ exceptional.}  Suppose $H_1$ and $H_2$ are special subgroups of $G$ with the following properties.  The group $H_1$ is a maximal proper connected subgroup of $G$ which is semisimple, of rank equal to $\rk G$, and not isomorphic to a subgroup of any maximal proper connected subgroup of $G$ except conjugates of itself, while the group $H_2$ is not isomorphic to any subgroup of $H_1$.  If some proper subgroup $H$ of $G$ satisfies $H_1\prec H$, then $H_1$ is isomorphic to a quotient of $H^\circ$.  Therefore, the semisimple rank of $H^\circ$ must equal $\rk G$, and by \cite[Corollary 2 (ii)]{lieseitz}, that implies that $H^\circ$ is semisimple, so $H^\circ$ is conjugate to $H_1$.
However, $H_2$ is semisimple and does not embed in $H_1$, so we cannot have $H_2\prec H$.

Using \cite[Table 10.3]{lieseitz}, we find such pairs $(H_1,H_2)$ in all cases except  $E_7$.  
For $E_6$, take $H_1$ a subgroup of type $A_2^3$ and $H_2$ a subgroup of type $A_1\times A_4$
contained in a maximal subgroup of type $A_1\times A_5$.
For $E_8$, take $H_1$ a  subgroup  of type $A_1^8$ and $H_2$ a  subgroup of type $A_2^4$.  For $F_4$, take $H_1$ of type $A_2^2$ and $H_2$ of type $A_1^4$ (contained in a maximal subgroup of type $D_4$).
For $G_2$, take $H_1$ of type $A_1^2$ and $H_2$ of type $A_2$.
Except for case $F_4$, the dimension of $H_2$ is greater than that of $H_1$, so there is no embedding of $H_2$ in $H_1$.  For $F_4$, there is no embedding of a group of type $A_1^4$ in a group of type $A_2^2$ because there is no embedding of a group of type $A_1^2$ in a group of type $A_2$.

For $E_7$, we proceed somewhat differently, letting $H_1$ denote a special subgroup of $E_7$ of type $A_1^7$ (contained in a maximal subgroup of type $A_1\times D_6$) and letting $H_2$ be a special subgroup of type $A_2\times A_4$ (contained in a maximal subgroup of type $A_2\times A_5$).  Suppose a proper subgroup $H$ of $G$ satisfies  $H_1\prec H$ and $H_2\prec H$.  Without loss of generality, we may assume $H$ is connected.
Let $M$ be a maximal proper subgroup of $G$ containing $H_1$.  Then
$M$ contains the rank $7$ semisimple group $H_1$, so it must be of rank $7$ and therefore, by \cite[Table 10.3]{lieseitz}, must be a group whose identity component is semisimple of type $A_1\times D_6$, $A_7$, $A_2\times A_5$, $A_1^3\times D_4$,
or $A_1^7$.  However, only a group of type $A_1\times D_6$, $A_1^3\times D_4$, or $A_1^7$ can contain a group of type $A_1^7$, and no group of any of these types can contain a subgroup of type $A_2\times A_4$.
This finishes the proof of part (1).  

For part (2), we first use Proposition~\ref{A char not 2} to prove $H_i\prec H$ for all $i$ implies $H^\circ$ acts irreducibly on the natural representation of $\GL_n$.  We may therefore assume $n\ge 3$.
To the list of $H_i$, we add two additional very special subgroups of $\SL_n$, namely the image $H'$ of $\SL_2$ under the symplectic representation $V_0^{n-2}\oplus V_1$ and the image $H''$ of $\SL_2$ under the orthogonal representation $V_0^{n-3}\oplus V_2$.  Proposition~\ref{5.6} then proves that $H$ must be $\Sp_n$ with its natural representation or all of $\SL_n$.  The $\Sp_n$ can is ruled out by $H''\not\prec \Sp_n$.

For each case in part (3), we use very special subgroups associated to two different representations of $\SL_2$.  For type $B_r$, we use the representations $V_{2r}$ and $V_0^2\oplus V_{2r-2}$ to define very special subgroups $H'$ and $H''$ of $\SO_{2r+1}$.
Thus $H'$ is the principal $\SL_2$. which is known to be a maximal subgroup of $\SO_{2r+1}$
except when $r=3$ when it is contained in the maximal subgroup $G_2$  of $\SO_7$.  
In general $H''\not\prec H'$, and for $r=3$, $H''\not\prec G_2$, so in either case the statement of (3) holds.
For type $C_r$, we use the representations $V_{2r-1}$ (which gives the principal $\SL_2$ in $\Sp_{2r}$)
and $V_{2r-3}\oplus V_1$.  Since the principal $\SL_2$ is always maximal for symplectic groups, (3) holds.
For type $D_r$, we use the representations $V_0\oplus V_{2r-2}$, which gives the principal $\SL_2$, which we denote $H'$,
and $V_2\oplus V_{2r-4}$, which gives another very special subgroup, $H''$.  Now, $H'$ is contained in $\SO_{2r-1}$ be in no other group intermediate between $H'$ and $\SO_{2r}$, while $H''\not\prec \SO_{2r-1}$.

Part (4) holds because in each of the cases above except for case $A_r$ when $r$ is even, one of the $H_i$ has rank greater than half the rank of $G$.  For the remaining case, we use the fact that $\SL_{2k+1}$ does not contain any subgroup isogenous to $A_1^{2k}$.
\end{proof}

\section{Main Theorems} \label{main theorems}

In this section, we prove the main theorems of the paper.  We begin with a proposition.

\begin{proposition}  \label{crt}   Let $K_i, 1 \le i \le k$, be any finite separable extensions of an infinite field $K$, and let $G$ be any connected linear algebraic group over $K$.
For every element $x = (x_1,\ldots,x_k)\in \prod_{i=1}^k G(K_i)$, 
there exists a finitely generated $K$-domain $A$ with fraction field isomorphic to $K(t)$ as $K$-algebra
such that $x$ lies in the image of $G(A)\to\prod_{i=1}^k G(K_i)$.
\end{proposition}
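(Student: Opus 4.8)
The plan is to realize $x=(x_1,\dots,x_k)$ as the system of reductions, at $k$ suitable closed points of an affine line, of a single morphism from a rational curve into $G$.

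\emph{Step 1: a congruence reformulation.} By the primitive element theorem write $K_i\cong K[t]/(f_i)$ with $f_i\in K[t]$ monic, irreducible and separable; as $K$ is infinite we may, after replacing each primitive element by a translate $\alpha_i+c_i$ with suitable distinct $c_i\in K$, assume the $f_i$ pairwise distinct, hence pairwise coprime. Writing $\mathfrak f=f_1\cdots f_k$, it suffices to find $h\in K[t]$ prime to $\mathfrak f$ and $g\in G(K[t][1/h])$ with $g\equiv x_i\pmod{f_i}$ for every $i$: then $A:=K[t][1/h]$ is a finitely generated $K$-domain with $\operatorname{Frac}(A)\cong K(t)$, each $(f_i)A$ is a maximal ideal with residue field $K_i$, and $g$ maps to $x$ under $G(A)\to\prod_i G(A/(f_i)A)=\prod_i G(K_i)$.

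\emph{Step 2: constructing $g$.} Embedding $G\hookrightarrow\GL_N$ and interpolating the matrices $x_i$ by a matrix-valued polynomial (Chinese remainder theorem) gives a curve through the $x_i$, but a priori only inside $\GL_N$; keeping the curve inside $G$ is the whole content, and this is the one place where the structure of $G$ is used. When $G$ is split reductive — in particular split semisimple, the case needed for Theorems~\ref{char 0} and \ref{char p} — the big cell $U^-\times_K T\times_K U$ is a dense open $K$-subvariety of $G$ isomorphic to $\A^N\times\G_m^{r}\times\A^N$, hence to a principal open subset $V=\{q\neq0\}$ of $\A^{\dim G}_K$; a $K$-split unipotent radical, if present, only adds a factor isomorphic to $\A^m$. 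In particular $G$ is $K$-rational, so $G(K)$ is Zariski dense in the irreducible variety $G$ and therefore is not contained in the union of finitely many proper closed subvarieties (over any extension of $K$). Since $y\mapsto y^{-1}x_i$ is an automorphism of $G_{K_i}$, the locus $\{y:y^{-1}x_i\notin V\}$ is proper and closed, so we may choose $y\in G(K)$ with $y^{-1}x_i\in V(K_i)$ for all $i$ simultaneously. Reading $y^{-1}x_i$ in the coordinates of $V\subset\A^{\dim G}$ as a tuple in $K_i^{\dim G}$ whose $q$-value is a unit, and interpolating each of these $\dim G$ coordinates by a polynomial in $K[t]$ (Chinese remainder theorem again; the resulting value of $q$ then stays prime to $\mathfrak f$), we obtain $w\in V(A)\subset G(A)$ with $A=K[t][1/q(w)]$, reducing to $y^{-1}x_i$ modulo each $f_i$. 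Then $g:=y\cdot w\in G(A)$ reduces to $x_i$ modulo each $f_i$, which is exactly what Step~1 requires.

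\emph{The main obstacle, and the general case.} The construction in Step~2 is, in substance, weak approximation for the $K(t)$-rational variety $G_{K(t)}$ at the finitely many places $v_i$ cut out by the $f_i$ (whose completions are $K_i((f_i))$, with valuation rings $K_i[[f_i]]$ and residue fields $K_i$); smoothness of $G$ over $K$ is what makes each reduction $G(K_i[[f_i]])\to G(K_i)$ surjective with open kernel, so that the $v_i$-adic sets to be approximated are nonempty and open. The only genuine difficulty is the one isolated above — producing the interpolating curve \emph{inside} $G$ rather than merely in $\GL_N$ — and it is overcome by a rational parametrization of a dense open of $G$. Such a parametrization exists for every split $G$, hence in all cases needed here; for a general connected $G$ one first removes the $K$-split unipotent radical as above, and when the reductive quotient is not split one descends from a finite separable extension over which $G$ splits, which is the only ingredient beyond what the present applications require.
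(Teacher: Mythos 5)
Your construction is, in substance, the paper's own proof: the paper also writes $K_i\cong K[t]/\m_i$ with distinct maximal ideals (using that $K$ is infinite), takes a dense open $K$-subvariety $U\subset G$ isomorphic to an open subset of $\A^n$, translates $U$ by an element of $G(K)$ so that it contains all the points $x_i$ (your choice of $y$ with $y^{-1}x_i\in V(K_i)$), interpolates the coordinates by the Chinese Remainder Theorem to get a morphism $\xi\colon\A^1\to\A^n$ hitting $\iota(x_i)$ at $\m_i$, and takes $A$ to be the coordinate ring $K[t,1/Q(t)]$ of $\xi^{-1}(\iota(U))$. The only point of divergence is how the rational chart is produced: the paper simply asserts that a connected linear algebraic group over $K$ is rational as a $K$-variety and thereby treats the statement in the generality in which it is phrased, whereas you prove rationality via the big cell, which is complete only for split $G$ (plus a split unipotent radical). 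Your closing remark that the non-split case follows by ``descending from a finite separable splitting field'' is not an argument as it stands—rationality does not descend along finite separable extensions, and that is exactly the delicate point—so, as written, your proof establishes the proposition only for split groups. That said, this is the case actually used in the proofs of Theorems \ref{char 0} and \ref{char p}, and your explicit big-cell justification is, for that case, more self-contained than the paper's unreferenced rationality claim; if you want the general statement you should either quote a rationality/unirationality-plus-density result for the class of groups considered or restrict the hypothesis, rather than appeal to descent.
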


\begin{proof}
By the theorem of the primitive element, each $K_i$ is isomorphic as $K$-algebra to $K[t]/\m_i$, where, as $K$ is infinite, the $\m_i$ can be chosen to be pairwise distinct maximal ideals.
Fix for each $K_i$ an element $x_i\in G(K_i)$.   Our goal is to find a polynomial $Q(t)\in K[t]$ not in any $\m_i$ and an element $x\in G(K[t,1/Q(t)])$ which reduces to $x_i$ under reduction (mod $\m_i$) for all $1\le i\le k$.

As $G$ is a connected linear algebraic group over $K$, it is rational as a $K$-variety.  Let $U\subset G$ denote a non-empty open $K$-subvariety of $G$
which is isomorphic to an open subvariety of $\A^n$.  As $K$ is infinite, $G(K)$ is Zariski-dense in $G$, so there exists a translate of $U$ by an element of $G(K)$ which contains all the closed points of $G$ in $\{x_1,\ldots,x_k\}$,
so without loss of generality, $G$ has this property.  We fix an open immersion $\iota\colon U\to \A^n$ over $K$.

By the Chinese Remainder Theorem, the natural homomorphism $K[t]\to \prod_i K[t]/\m_i$ is surjective, so there exist elements $A_1(t),\ldots,A_n(t)\in K[t]$
such that $A_j(t)$ (mod $\m_i$) gives the $j$th coordinate of $\iota(x_i)$.  In other words, the $n$-tuples $(A_1(t),\ldots,A_n(t))$ defines a morphism $\xi\colon \A^1\to \A^n$ which maps the closed point associated to $\m_i$ to $\iota(x_i)$ for all $i$.
Thus $\xi^{-1}(\iota(U))$ is an open subset of $\A^1$.  Its coordinate ring is therefore of the form $A=K[t,1/Q(t)]$, and the restriction of $\xi$ gives a morphism $\Spec A\to U$, proving the proposition.
\end{proof}

We remark that it follows immediately that every finitely generated free subgroup of 
$\prod_{i=1}^k G(K_i)$ lies in the image of $G(A)$ for some choice of $A$.

We can now finish the proofs of Theorems \ref{char 0} and \ref{char p}.    
Let $K$ be a global field.  By Corollary~\ref{special}, for every split simple algebraic group $G$ over $K$, and every special subgroup $H$ of $G$, $H(K)$ contains a strongly dense free subgroup isomorphic to $F_2$.  We first assume that $G$ is not of type $C_2$ if $\cha K=3$.  
By Theorem~\ref{generating sets}, there exist almost abelian algebraic groups $H_1,\ldots,H_s$ over $K$ such that 
$\{(H_1)_{\bar K},\ldots,(H_s)_{\bar K}\}$ is a generating set for 
$G_{\bar K}$.

By Lemma~\ref{A1}, we may choose
injective homomorphisms $f_i\colon F_2\to H_i(K)$  with strongly dense images $f_i(F_2) = \Gamma_i\subset H_i(K)\subset  G(\bar K)$ for $1\le i\le s$.

By Proposition~\ref{crt}, there exists a finitely generated $K$-domain $A$ with maximal ideals $\m_1,\ldots,\m_s$ and a homomorphism $F_2\to G(A)$ which specializes $\pmod {\m_i}$ to $f_i$ for $i=1,\ldots,s$.  Moreover, we may assume the fraction field of $A$ is $K(t)$.
Thus, for every non-abelian subgroup $\Delta\subset F_2$, $f_i(\Delta)$ has Zariski-closure $H_i$.  The Zariski closure of the image of $\Delta$ in $G(K(t))$ degenerates to each of the $H_i$, so by Theorem~\ref{degeneration}, it must be all of $G$, as claimed.

The same method works for $\Sp_4$ when $\cha K=3$, except that instead of a generating set of special subgroups of $\Sp_4$, we have a generating set consisting of
the image $H_1$ of $\SL_2\times \SL_2$ in $\Sp_4$ and the derived group $H_2$ of the stabilizer of a line in the natural representation.
By \cite[Appendix D]{BGGT2}, $H_2$ contains a strongly dense free subgroup $F_2$, and we define $L$, $A$, $f_i$, and so on, as before.  Since $H_1$ is a maximal connected subgroup of $G$, any connected subgroup $H$ which can degenerate
to $H_1$ is either $H_1$ or $G$, and $H_1$ cannot degenerate to $H_2$ because $H_1$ has an invariant $2$-dimensional subspace while $H_2$ does not.  Thus, if $G$ is simple then $G(K(t))$ contains a strongly dense free subgroup.

Now consider the general case when $G$ is semisimple.  There is no harm in assuming that $G$ is simply connected
(or has trivial center)  and so is a direct product.  By Goursat's lemma, we can assume that all simple factors are of the same type.
So say $G = \underbrace{J \times \ldots \times J}_k$ with $J$ simple.    
By Theorem~\ref{generating sets}, $J_{\bar K}$ has a generating set $\{(H_1)_{\bar K},\ldots,(H_s)_{\bar K}\}$,
and we may assume that $J$ cannot degenerate to any group isomorphic to $H_1\times H_1$.
In fact, 
for each $i\in \{1,\ldots,s\}$ and each $j\in \{1,\ldots,k\}$ we define an almost abelian $K$-subgroup $H_{i,j}$ of $J$ such that $(H_{i,j})_{\bar K}$ is isomorphic to $(H_i)_{\bar K}$ and such that each of the division algebras associated to any $H_{i,j}$ is ramified over some prime of $K$ which none of the others is ramified over.

By Theorem~\ref{generating sets} (4), we may assume that $J_{\bar K}$ does not have any subgroup isomorphic to $(H_1)_{\bar K}\times (H_1)_{\bar K}$.
We  fix for each $i,j$ a homomorphism $f_{i,j}\colon F_2\to H_{i,j}(K)\subset J(\bar K)$, and choose $A$, $\m_1,\ldots,\m_s$, and $\tilde f_j\colon F_2\to J(A)$ which reduces to $f_{i,j}$ modulo $\m_i$.  Thus each $\tilde f_j(F_2)$ is Zariski-dense in $J(K(t))$.  Defining $\tilde f = (\tilde f_1,\ldots,\tilde f_k)\colon F_2\to J^k(K(t))$, it follows that for all non-abelian subgroups $\Delta$ of $F_2$, the Zariski closure of $\tilde f(\Delta)$ maps surjectively to each factor $J$.
By Goursat's lemma to prove that $\tilde f(\Delta)$ is Zariski-dense, it suffices to prove that it is not contained in the graph of any isomorphism between any two factors of $J$.  

We may therefore assume that $k=2$ and for some $\Delta\subset F_2$, $\tilde f(\Delta)$ has Zariski closure isomorphic to $J$.  However the Zariski-closure of its (mod $\m_1$) reduction
$(f_{1,1},f_{1,2})(\Delta)$ in $J\times J$ 
is $H_{1,1}\times H_{1,2}$ since it maps onto $H_{1,1}$ and $H_{1,2}$, which are non-isomorphic simple algebraic groups.  This is impossible since $J_{\bar K}$ does not contain a subgroup isomorphic to $(H_{1,1})_{\bar K} \times (H_{1,2})_{\bar K}$.

Since every transcendental field of characteristic $0$ contains a subfield isomorphic to $\Q(t)$, taking $K=\Q$, this gives the first part of Theorem~\ref{char 0}.
Since every extension of $\F_p$ of transcendence degree $\ge 2$ contains a field isomorphic to $\F_p(s,t)$, taking $K=\F_p(s)$ gives the first part of Theorem~\ref{char p}.

To prove the second part of both theorems (the density of pairs generating a strongly dense subgroup), observe that this follows easily from the existence part. Indeed, if $F$ is a strongly dense subgroup, then the set of non-commuting pairs $(x,y) \in F \times F$ is itself Zariski-dense in $G \times G$. This is because  for each $x \in F\setminus \{1\}$ it contains $\{x\} \times (F\setminus C_F(x))$, whose Zariski-closure is $\{x\} \times G$.

\section{Strongly Dense Subgroups in Affine Groups}

Almost all the results about strongly dense subgroups have been in semisimple groups.   Suppose that
$G$ is not semisimple.  When can $G$ contain a strongly dense free nonabelian subgroup $\Gamma$?   

We note:

\begin{lemma}  Suppose that an algebraic group $G$ over an algebraically closed field $k$ contains a strongly dense
nonabelian free subgroup $\Gamma$.  Then $G$ can be topologically generated by $2$ elements, 
$G$ is connected,  and $G$ is perfect.
\end{lemma}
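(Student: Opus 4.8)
The plan is to extract each of the three conclusions — two-generation, connectedness, perfectness — from the strong density hypothesis by choosing a suitable subgroup $\Delta \subset \Gamma$ and using that its Zariski closure is forced to be all of $G$.

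First I would prove $G$ is topologically generated by two elements. Since $\Gamma$ is a nonabelian free group, it contains a free subgroup on two generators, say $\Delta = \langle a, b\rangle \cong F_2$. This $\Delta$ is nonabelian, so by strong density its Zariski closure is $G$. Hence $a$ and $b$ topologically generate $G$, giving the first claim. For connectedness, let $G^\circ$ be the identity component, a closed subgroup of finite index $m = [G:G^\circ]$. Inside the free group $\Gamma$ choose a nonabelian free subgroup $\Delta$ all of whose elements lie in $G^\circ$; for instance, the kernel of $\Gamma \to G/G^\circ$, or more concretely the subgroup generated by $a^m$ and $b a^m b^{-1}$ (or any two generators of a finite-index subgroup of $\Gamma$, which is again free of rank $\ge 2$). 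Then $\Delta$ is nonabelian, so its Zariski closure is $G$; but $\Delta \subset G^\circ(k)$ and $G^\circ$ is closed, so the closure lies in $G^\circ$. Therefore $G = G^\circ$ is connected.

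For perfectness, consider the derived group $[G,G]$, which is a closed (connected, since $G$ is) normal subgroup of $G$, and the quotient map $q\colon G \to G/[G,G] =: T$, an abelian algebraic group. The image $q(\Gamma)$ is a finitely generated abelian group; pick a nonabelian free subgroup $\Delta \subset \Gamma$ contained in $\ker(q)$. Such a $\Delta$ exists because $q(\Gamma)$, being a finitely generated abelian group, is virtually a free abelian group of some rank $s$, hence has a subgroup of finite index isomorphic to $\Z^s$, whose preimage $\Gamma'$ in $\Gamma$ has finite index and thus is free of rank $\ge 2$ with $[\Gamma',\Gamma'] \subset \ker q$; now take $\Delta = [\Gamma',\Gamma']$, which is nonabelian (the derived subgroup of a nonabelian free group is itself a nonabelian free group, being noncyclic). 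Alternatively, and perhaps more cleanly: strong density already shows every noncyclic $\Delta \subset \Gamma$ is Zariski-dense, so in particular $[\Gamma,\Gamma]$ is Zariski-dense in $G$; but $q([\Gamma,\Gamma]) = \{1\}$, so $[\Gamma,\Gamma] \subset \ker q = [G,G]$, forcing $[G,G]$ to be dense and hence (being closed) equal to $G$. Thus $G$ is perfect.

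The only point requiring care is the existence, in each case, of a \emph{nonabelian} free subgroup $\Delta$ of $\Gamma$ satisfying the relevant containment; this rests on the standard facts that a finite-index subgroup of a free group is free (Nielsen–Schreier) and that a finite-index subgroup of a nonabelian free group again has rank $\ge 2$, together with the observation that the commutator subgroup of a noncyclic free group is noncyclic. Once one has such a $\Delta$, strong density does all the work: its Zariski closure is $G$, yet it is visibly contained in a proper closed subgroup ($G^\circ$, or $[G,G]$) in the connectedness and perfectness arguments, a contradiction unless that subgroup is all of $G$. I expect no genuine obstacle here; the argument is a direct unwinding of Definition~\ref{strongly dense}.
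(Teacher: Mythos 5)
Your proposal is correct and follows essentially the same route as the paper: a rank-two free subgroup of $\Gamma$ gives topological $2$-generation, the finite-index subgroup $\Gamma\cap G^\circ$ (nonabelian free, hence Zariski-dense) forces $G=G^\circ$, and $[\Gamma,\Gamma]$ (nonabelian, dense, contained in the closed subgroup $[G,G]$) forces perfectness. The paper's proof is just a terser statement of exactly these three observations, so there is nothing to add.
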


\begin{proof}
As $\Gamma$ contains rank $2$ free subgroups  which are Zariski-dense, the first assertion holds.
Suppose that $G$ is not perfect.  Then the Zariski closure of $\Gamma'$ is contained in $G'$ whence
$\Gamma'$ is not Zariski-dense.   If $G$ is not connected, then $G^{\circ} \cap \Gamma$ is not Zariski-dense
in $G$. 
\end{proof}

In particular, this implies that   $G/R_u(G)$ is semisimple.    We can always replace $G$ by $G/\Phi(G)$ where $\Phi(G)$
is the Frattini subgroup of $G$ and so assume that $R_u(G)$ is a completely reducible $G$-module.  
The condition that $G$ is $2$-generated imposes a limit on the multiplicities of the composition factors in $R_u(G)$ (in terms
of dimension and cohomology).   We do note that:

\begin{lemma}   Suppose that $R_u(G)$ is a simple $G$-module and $k$ is not algebraic over a finite
field.   Then $G$ is $2$-generated (topologically).
\end{lemma}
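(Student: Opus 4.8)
The plan is to analyze the structure of $G$ as an extension $1 \to R_u(G) \to G \to S \to 1$, where $S = G/R_u(G)$ is semisimple (indeed, by the first lemma of this section, $G$ is connected and perfect, so $S$ is semisimple). Since we are free to replace $G$ by $G/\Phi(G)$, and since $R_u(G) = V$ is assumed to be a simple $G$-module, we may regard $V$ as a simple $S$-module on which $G$ acts through its quotient $S$. The heart of the matter is to exhibit two elements $g_1, g_2 \in G(k)$ whose closed subgroup generated is all of $G$. The natural strategy is to start with a pair $(s_1, s_2) \in S(k) \times S(k)$ topologically generating $S$ — such a pair exists because $S$ is semisimple, hence $2$-generated (this is classical; e.g. a dense free pair exists by the Tits alternative argument, or one cites that semisimple groups over large fields are topologically $2$-generated) — lift them to elements $g_1, g_2 \in G(k)$, and then argue that after modifying the lifts by suitable elements of $V$ one can force the generated subgroup to surject onto $V$ as well.

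The key step is the following cohomological argument. Let $H$ be the closed subgroup of $G$ generated by the lifted pair $(g_1, g_2)$. Since $H$ surjects onto $S$ (as $(s_1,s_2)$ generates $S$), we have $G = H \cdot V$, so $H$ is the preimage of $S$ under a sub-extension, i.e. $H \cap V =: W$ is an $S$-submodule of $V$ and $1 \to W \to H \to S \to 1$. Because $V$ is a simple $S$-module, either $W = V$ (in which case $H = G$ and we are done) or $W = 0$, meaning $H$ is a closed complement to $V$ in $G$, giving a splitting of the extension and an identification $H \cong S$. The set of such complements, up to conjugacy by $V$, is a torsor under $H^1(S, V)$; more to the point, the different choices of lifts $(g_1, g_2)$ differ by a pair $(v_1, v_2) \in V \times V$, and the obstruction to the generated group being $G$ rather than a complement is measured by whether $(v_1, v_2)$ lies in a certain proper subset. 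The crucial point where the hypothesis that $k$ is not algebraic over a finite field enters is that $V \times V$ is an infinite-dimensional-enough variety over $k$, or rather that $V$ itself is a $k$-vector space of positive dimension, so $V(k)$ is infinite; one then shows that the "bad" locus of pairs $(v_1, v_2)$ for which $H \neq G$ is contained in a proper Zariski-closed (or at least proper algebraic) subset, so a generic choice works. Concretely, if $g_1' = g_1 v_1$ and the commutator-type or conjugation relations used to detect $W$ produce, as $(v_1,v_2)$ varies, a linear (affine) map $V \times V \to V/W'$ for each candidate submodule $W' \subsetneq V$, one checks this map is not identically zero; since $V$ is simple the only candidate is $W' = 0$, and non-vanishing of the relevant map $V \times V \to V$ follows from faithfulness of the $S$-action together with the fact that a semisimple group acting nontrivially and irreducibly on $V$ cannot have all of $V$ fixed.

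More precisely, I would argue as follows. Fix lifts $g_1, g_2$ of $s_1, s_2$. For $v \in V$, replacing $g_1$ by $g_1 v$ does not change the image in $S$. Let $H_v = \overline{\langle g_1 v, g_2\rangle}$. Then $H_v \cap V$ is an $S$-submodule of $V$, hence $0$ or $V$. Suppose for contradiction that $H_v \cap V = 0$ for all $v \in V(k)$; then each $H_v$ is a complement to $V$. But the elements $g_1 v$ for varying $v$, together with $g_2$, cannot all lie in conjugates of a single complement unless $v$ is constrained: indeed if $H_0$ is a complement containing $g_1$ and $g_2$, and $g_1 v \in H_w$ with $H_w = u H_0 u^{-1}$ for some $u \in V$, then comparing projections to $V$ in the semidirect-product coordinates forces $v$ to lie in $(1 - \mathrm{Ad}(s_1))V + (\text{something})$, an affine subspace; since $S$ acts on $V$ without fixing everything, $(1-\mathrm{Ad}(s_1))V + (1-\mathrm{Ad}(s_2))V$ need not be all of $V$, but one can choose the original topological generators $s_1, s_2$ of $S$ so that $\mathrm{Ad}(s_1)$ and $\mathrm{Ad}(s_2)$ generate a Zariski-dense subgroup of the image of $S$ in $\GL(V)$, which is all of $S$ (as $V$ is a faithful simple module after replacing $S$ by $S/\ker$), and then the span of $(1-\mathrm{Ad}(s))V$ over $s$ in this subgroup is an $S$-submodule, nonzero since $V$ is not trivial, hence all of $V$. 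Combining, for generic $v \in V(k)$ — which exists because $V(k)$ is infinite, using that $k$ is not algebraic over a finite field — we get $H_v \cap V = V$, i.e. $H_v = G$, and $(g_1 v, g_2)$ topologically generates $G$.

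The main obstacle I anticipate is handling the case distinction cleanly: it is easy to see that $H_v \cap V$ is either $0$ or $V$, but pinning down exactly why a generic modification $v$ escapes every possible complement requires care with the affine-torsor structure of the set of complements and with the action of $V$-conjugation. One must be sure that the "bad set" of $v$ is genuinely a proper subset of $V(k)$ and not all of it — this is where simplicity of $V$ (ruling out intermediate submodules) and faithfulness/nontriviality of the $S$-action (ensuring $\sum_s (1-\mathrm{Ad}(s))V = V$) are both essential, and where the hypothesis on $k$ guarantees $V(k)$ is large enough to contain a point outside the bad set. If the bad set could conceivably be a finite union of affine hyperplanes, infiniteness of $k$ alone suffices; the stated hypothesis that $k$ is not algebraic over a finite field is presumably there to ensure $V(k)$ is not a finite set in the first place (so that $V \neq 0$ has infinitely many $k$-points, which fails precisely over $\overline{\FF_p}$-algebraic fields only if $\dim V$ were... well, $V(k)$ is infinite as soon as $k$ is infinite, so the hypothesis is likely used elsewhere — perhaps to guarantee that $S(k)$ contains a topologically generating pair, or for a later induction; I would double-check which role it plays and state it accordingly).
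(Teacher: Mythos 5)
Your overall strategy (lift a topological generating pair of $S=G/R_u(G)$, note that the closure $H$ of the lifted pair satisfies $H\cap V\in\{0,V\}$ by simplicity of $V=R_u(G)$, and then show a suitable choice of lift avoids all complements) is the right shape, but there is a genuine gap at the decisive step. When you argue that the ``bad'' modifications $v$ are constrained to an affine subspace, you only consider complements of the form $uH_0u^{-1}$ with $u\in V$, i.e.\ $V$-conjugates of one fixed complement $H_0$. In general the complements to $V$ in $G$ are \emph{not} all conjugate: their $V$-conjugacy classes are parametrized by $H^1(S,V)$, which is frequently nonzero in positive characteristic even for $V$ irreducible. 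Your analysis therefore does not exclude the possibility that, as $v$ varies over $V$, the pair $(g_1v,g_2)$ always lands in \emph{some} complement, just not a conjugate of $H_0$. This is exactly the point the paper's proof addresses: it invokes the inequality $\dim H^1(G/R_u(G),R_u(G))<\dim R_u(G)$ (Aschbacher--Guralnick for finite groups, with the algebraic-group case easier and also deducible from the finite case). With that bound, the count is clean if you vary \emph{both} lifts: lifts of $(s_1,s_2)$ form a space of dimension $2\dim V$, while the bad lifts are in bijection with the complements, a set of dimension $\dim Z^1(S,V)=\dim V+\dim H^1(S,V)<2\dim V$ (using $V^S=0$), so a generic lift topologically generates $G$. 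Without some control on $H^1$ your argument cannot close, and nothing in your write-up supplies it.

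A secondary point: you are unsure where the hypothesis that $k$ is not algebraic over a finite field enters. Its role is essential and comes at the very first step: over $\overline{\FF}_p$ every finitely generated subgroup of $G(k)$ lies in $G(\FF_q)$ for some $q$ and hence is finite, so no positive-dimensional group is topologically finitely generated; the hypothesis is what guarantees that the semisimple quotient $S$ admits a topologically generating pair $(s_1,s_2)$ (and, once the bad locus is shown to be a proper constructible subset, that there are enough rational points to avoid it). You should state this explicitly rather than leave it as a conjecture about where the hypothesis is used.
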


This follows easily from the fact that 
$$\dim H^1(G/R_u(G), R_u(G)) < \dim R_u(G)$$
  (which is an old result
from \cite{AG} for finite groups and the proof for algebraic groups is much easier and also follows from the same
inequality for finite groups of Lie type).  

We ask whether any connected perfect algebraic group over an algebraically closed field  that is
 topologically generated by two elements contains a strongly dense subgroup.

Here we show that certain affine groups do have this property.  We could extend this but content ourselves with
considering the affine groups $\ASL_n(K) = V.\SL(V)$ with $\dim V = n > 1$. 

\begin{theorem}  Let $K=\QQ(t)$ or $\FF_p(s,t)$.   Let $G =\ASL_n(K)$.   Then $G$ contains strongly dense free
subgroups.
\end{theorem}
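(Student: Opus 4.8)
The group $G = \ASL_n(K) = V \rtimes \SL(V)$ with $\dim V = n$, and $K = \Q(t)$ or $\F_p(s,t)$. We want a nonabelian free subgroup $\Gamma \subset G(K)$ every nonabelian subgroup of which is Zariski-dense in $G$. The natural strategy is to leverage what we already know about strongly dense subgroups of the semisimple quotient $\SL_n$, together with a degeneration argument to control the unipotent radical $V = R_u(G)$. The plan is: first produce a free subgroup $\Delta_0 \subset \SL_n(K_0)$ over a global field $K_0$ which is strongly dense in $\SL_n$, using Theorem~\ref{char 0}/\ref{char p} (or rather the construction behind them via a generating set of special subgroups and Corollary~\ref{special}); then lift the generators to $\ASL_n$ by choosing generic translation parts in $V$, i.e. pick $a = (v_a, s_a)$, $b = (v_b, s_b) \in V(K) \rtimes \SL_n(K)$ where $s_a, s_b$ project to the chosen strongly dense generators and $v_a, v_b \in V$ are chosen ``in general position.'' Freeness of $\langle a,b\rangle$ is automatic since it surjects onto a free group.

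**Controlling nonabelian subgroups.** Let $\Gamma = \langle a, b \rangle$ and let $\Delta \le \Gamma$ be nonabelian. Its image $\bar\Delta$ under $G \to \SL_n$ is then nonabelian (a nonabelian subgroup of a free group has nonabelian image under a map to a free group, as long as the kernel meets $\Delta$ trivially — and the projection $\Gamma \to \SL_n(K)$ is injective on $\Gamma$ by construction, so $\bar\Delta \cong \Delta$). Hence $\bar\Delta$ is Zariski-dense in $\SL_n$. So the Zariski closure $H$ of $\Delta$ in $\ASL_n$ surjects onto $\SL_n$; writing $W = H \cap V$, which is an $\SL_n$-submodule of $V$ (normal in $H$ and hence, by surjectivity onto $\SL_n$, $\SL_n$-stable), we have $W = 0$ or $W = V$ since $V$ is the irreducible natural module of $\SL_n$ (for $n > 1$ and any characteristic it is irreducible). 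If $W = V$ we are done: $H = \ASL_n$. The crux is therefore to rule out $W = 0$, i.e. to show $\Delta$ cannot be contained in a conjugate of a complement $\SL_n \subset \ASL_n$, i.e. cannot have a common fixed point in $V$ (an affine fixed point $v$ with $\Delta \subset \Stab(v)$).

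**The degeneration / genericity argument.** To rule out a common fixed point, I would use the representation-variety and degeneration machinery of Sections 2 and 4. The condition ``$\langle a, b\rangle$ fixes a point of $\A^n$'' is a closed condition on the pair $(a,b) \in \ASL_n \times \ASL_n$: indeed the fixed-point set of $\langle a, b \rangle$ acting affinely is the (possibly empty) intersection of the affine subspaces $\operatorname{Fix}(a)$ and $\operatorname{Fix}(b)$, and by Corollary~\ref{cor:pv}/the properness argument of Section~4 (applied with the projective completion $\P^n$), having an affine fixed point is a closed, $\SL_n$-conjugation-invariant condition compatible with degeneration. So it suffices to exhibit a single pair $(a^*, b^*)$ over a global field, with strongly dense semisimple parts, such that $\langle a^*, b^* \rangle$ has \emph{no} common fixed point — equivalently, such that the cocycle $g \mapsto$ (translation part of the lift of $g$) is not a coboundary on any nonabelian subgroup. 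This one can arrange explicitly: for instance take $a^* = (v, s_a)$ with $s_a$ one of the strongly dense generators and $v \notin \operatorname{im}(s_a - 1)$ (possible whenever $s_a$ has eigenvalue $1$; if not, $s_a - 1$ is invertible and $a^*$ has a \emph{unique} affine fixed point, so instead pick $a^*$ with $s_a$ unipotent $\neq 1$ and $v$ outside $\operatorname{im}(s_a-1)$, forcing $\operatorname{Fix}(a^*) = \emptyset$ and hence no common fixed point for \emph{any} subgroup containing $a^*$). Then every $\Delta$ containing $a^*$ has empty fixed locus, and since nonabelian $\Delta$ has nonabelian hence Zariski-dense image in $\SL_n$, combining with $W = 0$ being excluded gives $W = V$.

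**Main obstacle.** The delicate point is the interaction between the two requirements on the generators: the semisimple parts must generate a subgroup that is \emph{strongly dense} in $\SL_n$ (so that every nonabelian $\bar\Delta$ is already dense downstairs), while simultaneously the affine parts must obstruct a common fixed point for every nonabelian $\Delta$ — and the cleanest way to guarantee the latter uniformly over all subgroups is to make a \emph{single generator} fixed-point-free on $\A^n$, which constrains its semisimple part (it should have $1$ as an eigenvalue but not be conjugate into $\Stab$ of an affine point). One must check this is compatible with choosing $s_a$ inside one of the special subgroups $\SL_{p_1} \times \cdots \times \SL_{p_m}$ furnishing the strongly dense pair of Lemma~\ref{A1}; this is a matter of choosing the free generators $x_i, y_i$ there with a mild extra genericity constraint (e.g. so that $x_1$ is unipotent nontrivial in one factor), which is harmless since the Tits-alternative free pair can be perturbed within a Zariski-dense set. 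Once that compatibility is secured, everything else — freeness, $\Delta \cong \bar\Delta$, irreducibility of $V$, and the degeneration-stability of the closed condition ``has an affine fixed point'' — is routine given the earlier results.
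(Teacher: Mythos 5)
There is a genuine gap, and it sits exactly at the point you flag as the ``main obstacle.'' Your device for excluding the complement case is to make a single generator $a^*$ fixed-point-free on $\A^n$, so that every subgroup \emph{containing} $a^*$ has empty fixed locus. But strong density quantifies over \emph{all} nonabelian subgroups $\Delta\le\Gamma$, and most of them contain no conjugate of $a^*$: for instance the commutator subgroup $[\Gamma,\Gamma]$ (or any $\Delta$ inside it) meets the conjugacy class of $a^*$ trivially, since $a^*$ survives in the abelianization. For such $\Delta$ your argument gives nothing. Moreover, the degeneration reduction does not repair this: to contradict ``the generic closure of $\Delta$ fixes an affine point'' you must know that the \emph{specialized subgroup} $\Delta^*$ (generated by the specialized words) has no affine fixed point, not merely that the specialized pair $\langle a^*,b^*\rangle$ has none. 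Arranging that simultaneously for every nonabelian $\Delta$ is tantamount to demanding that the specialization target be strongly dense in a group that contains nontrivial translations --- which is precisely the ingredient your proposal lacks. (There is also a subtlety in your use of Corollary~\ref{cor:pv} with $\P^n$: every affine transformation, including your fixed-point-free $a^*$, has fixed points on the hyperplane at infinity, and a subgroup dense in $\ASL_2\subset\ASL_n$ fixes nonzero vectors of the hyperplane when $n>2$; so one must isolate the condition ``fixes a point \emph{off} the hyperplane,'' which the paper does via commutation with a rank-one idempotent whose kernel is the hyperplane.)

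The paper's proof supplies exactly the missing piece: besides specializing the pair in $\ASL_n(A)$ to the special subgroups $H_1,\dots,H_r\le\SL_n$ used for Theorems~\ref{char 0} and \ref{char p} (note that a strongly dense pair in split $\SL_n$ over a global field, as in your sketch, is not available --- density over $\Q(t)$ or $\F_p(s)(t)$ is produced by several specializations, so the affine condition must be woven into the same ring $A$), one adds a further specialization to a $2$-generated strongly dense free subgroup $H_0$ of $\ASL_2(K)$, naturally embedded in $\ASL_n(K)$, whose existence is \cite[Appendix D]{BGGT2}. Then for any nonabelian $T$, the specialization of $T$ is Zariski-dense in $\ASL_2$, hence contains translations and commutes with no rank-one idempotent with kernel the hyperplane, and the closed-condition/degeneration argument rules out the complement case uniformly. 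Two further points you elide: the step ``$H\cap V=0$ implies $H$ fixes an affine point'' needs all complements of $V$ to be $V$-conjugates of $\SL_n$, i.e.\ the vanishing of the rational cohomology $H^1(\SL_n,V)$, which the paper justifies by Andersen's strong linkage principle; and the case $n=2$ is not covered by this scheme at all and is quoted directly from \cite[Appendix D]{BGGT2}.
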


\begin{proof}  If $n=2$, this is proved in \cite[Appendix D]{BGGT2} where $K$ is any field not algebraic over a finite
field.   So assume that $n > 2$.   We now choose elements $x, y \in  \ASL_n(\QQ[t])$ or $\ASL_n(\FF_p(s)[t])$
that specialize to strongly dense subgroups of $H_1, \ldots, H_r$ with $H_i \le \SL_n(\bar{K})$ as in the proof
of the main theorems for $\SL_n$.  

Let $H_0$ be a $2$-generated free strongly dense subgroup of $\ASL_2(K)$ naturally embedded in $\ASL_n(K)$.  
We pick $x, y$ that also specialize to a strongly dense subgroup of $H_0$.   Let $S = \langle x, y \rangle$ and let
$T$ be any nonabelian subgroup of $S$.   As we argued in the proof of our main result, we see that the Zariski closure
is either $G$ or is a complement to $V=R_u(G)$.  

The rational cohomology group $H^1(\SL_n,V)=0$ by \cite{Andersen} since if $\delta$ denotes the half sum of positive roots of $\SL_n$ and $\varpi_1$ is the highest weight of $V$, the weights $-\delta$ and $\varpi_1-\delta$ are not in the same Weyl group orbit (mod $p$).
Thus,
$T$ is contained a complement to $V$ in $G$ if and only if $T$ fixes a nonzero vector in the $n+1$ dimensional representation
of $G$ (we embed $G = \SL_{n+1}(K)$ contained in the stabilizer of a hyperplane).   Note that this is a closed condition
(this is equivalent to commuting with a rank $1$ idempotent whose kernel is the given hyperplane---the set
of such idempotents is the set of conjugates of a single such idempotent by $V$).     Since $H_0$ commutes
with no such idempotent,  it follows that $T$ does not either and so the Zariski closure of $T$ is $G$. 
\end{proof}

\section{Nonfree Strongly Dense Subgroups}

In this section, we give examples of finitely generated groups which are not free but can nevertheless be embedded in groups of the form $G(K)$ as strongly dense subgroups in the sense of Definition~\ref{strongly dense}.

Recall that a group $H$  is called \emph{residually free} if for every nontrivial element  $h\in H$,
there exists a free quotient $J$ of $H$ such that the image of  $h$ in $J$ is nontrivial.   

We now show that the methods of \cite{BGGT} and this paper can be used to prove that a large class of
groups (including surface groups of genus at least $2$) have strongly dense embeddings.

\begin{theorem}   
\label{frf}
Let $\Gamma$ be a finitely generated group satisfying the following conditions:
\begin{enumerate}
\item   $\mathrm{Hom}(\Gamma, G)$ is an irreducible variety for every simply connected simple algebraic group $G$.
\item   $\Gamma$ has trivial center and is residually free.
\end{enumerate}
If $K$ is an algebraically closed field of infinite transcendence degree and characteristic $p \ge 0$ and $G$ is a semisimple algebraic
group over $K$, then there exist strongly dense embeddings of $\Gamma$ into $G(K)$.
\end{theorem}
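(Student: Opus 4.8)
The plan is to realize $\Gamma$ as the image of the representation attached to the \emph{generic} point of its representation variety into $G$, and to check that this representation is injective (using residual freeness) and strongly dense (using the existence of strongly dense \emph{free} subgroups, Theorems~\ref{char 0} and~\ref{char p}). We may assume $\Gamma\neq 1$, since the trivial-center hypothesis otherwise forces $\Gamma=1$ and there is nothing to prove. First I would reduce to the case that $G$ is simple and simply connected: as $\Gamma$ has trivial center it meets the center of the simply connected cover $\tilde G$ trivially, so a strongly dense embedding into $\tilde G$ descends to one into $G$; and $\tilde G=G_1\times\cdots\times G_r$ with the $G_j$ simple simply connected, for which $\Hom(\Gamma,\tilde G)=\prod_j\Hom(\Gamma,G_j)$ is irreducible by~(1), with generic point projecting to the generic point of each factor. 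Granting the simple case for each $G_j$, a Goursat argument gives strong density into the product once one rules out, for each nonabelian subgroup $\Delta\le\Gamma$, that its image lies in the graph of an isomorphism between two isogenous factors; this is a proper closed condition on $\Hom(\Gamma,G_j)\times\Hom(\Gamma,G_k)$ (take a representation trivial on $\Delta$ in one factor and dense on $\Delta$ in the other), hence avoided at the generic point.

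So let $G$ be simple and put $V=\Hom(\Gamma,G)$, a variety over the prime field, irreducible by~(1), with generic point $\eta$ and function field $F=\kappa(\eta)$. As $K$ is algebraically closed of transcendence degree $\ge\dim V$ over the prime field (infinite transcendence degree suffices, uniformly in $G$), $F$ embeds in $K$, yielding $\rho\colon\Gamma\to G(K)$ whose underlying point of $V$ is $\eta$; in particular $\rho$ lies on no proper closed subvariety of $V$. For injectivity, fix $1\neq h\in\Gamma$: residual freeness gives a free quotient $q\colon\Gamma\twoheadrightarrow J$ with $q(h)\neq 1$, and $J$ embeds into $G$ over the algebraic closure of the prime field (as a nonabelian free subgroup when $\rk J\ge 2$, since such subgroups exist by the Tits alternative and $F_\infty\hookrightarrow F_2$, or by sending a generator to an element of infinite order when $J\cong\Z$). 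Thus the closed condition $\rho(h)=1$ on $V$ is proper, so $\rho(h)\neq 1$; since $\Gamma$ is countable this holds for all $h\neq 1$ simultaneously at $\eta$, and $\rho$ is injective.

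For strong density it suffices to show $\overline{\langle\rho(a),\rho(b)\rangle}=G$ for every $(a,b)\in\Gamma\times\Gamma$ with $[a,b]\neq 1$, as a nonabelian subgroup of $\rho(\Gamma)$ contains such a pair. Fix such a pair, let $\Delta=\langle a,b\rangle$, and first produce one representation $\rho^*\colon\Gamma\to G(\Omega)$, over some field $\Omega$, with $\rho^*(\Delta)$ Zariski-dense: residual freeness gives a free quotient $q\colon\Gamma\twoheadrightarrow J$ with $q([a,b])\neq 1$, so $q(\Delta)$ is a nonabelian, hence rank-$2$ free, subgroup of the finitely generated free group $J$; composing $J\hookrightarrow F_2\cong\Lambda$ with a strongly dense free subgroup $\Lambda\le G(\Omega)$ — which exists by Theorem~\ref{char 0} or~\ref{char p}, with $\Omega$ of transcendence degree $1$ or $2$ — gives such a $\rho^*$, since $\rho^*(\Delta)$ is then a nonabelian subgroup of $\Lambda$. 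Now, for a fixed finitely generated group $\Delta$, the locus of Zariski-dense homomorphisms in $\Hom(\Delta,G)$ is the complement of a countable union of proper closed subvarieties of each irreducible component: a non-dense representation has image inside either a conjugate of one of the finitely many positive-dimensional maximal closed subgroups of $G$, or a finite subgroup, and each of these gives such a subvariety. Pulling this back along the restriction morphism $V\to\Hom(\Delta,G)$, the set of $\rho\in V$ with $\overline{\langle\rho(a),\rho(b)\rangle}\neq G$ is contained in a countable union of closed subvarieties of $V$, each proper because $\rho^*$ lies outside it. The generic point $\eta$ lies on none of these, so $\overline{\langle\rho(a),\rho(b)\rangle}=G$; running over the countably many pairs $(a,b)$, the representation $\rho$ is strongly dense, as required.

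I expect the main obstacle to be the penultimate step: showing that, for a fixed two-generated $\Delta$, the non-Zariski-dense locus in $\Hom(\Delta,G)$ really is contained in a countable union of \emph{proper} subvarieties — that is, that Zariski-density is a generic condition on representations — which rests on the structure theory of $G$ (finitely many conjugacy classes of positive-dimensional maximal closed subgroups, each ``lies in a conjugate of $M$'' locus being proper and closed after passing to closures, plus the countably many finite possibilities) and on the existence of the dense representation $\rho^*$ to certify properness. The reduction to simple $G$ is routine but needs the care noted above with graphs of isomorphisms between isogenous factors.
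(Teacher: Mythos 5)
Your overall strategy coincides with the paper's: by irreducibility of $\Hom(\Gamma,G)$, the representation attached to the generic point (realized in $G(K)$ using the infinite transcendence degree) avoids any countable family of proper closed conditions, and properness of each condition is certified by composing a free quotient supplied by residual freeness with a strongly dense free subgroup of $G$; injectivity and the reduction to the simply connected case via trivial center are also as in the paper. The place where you diverge is in producing the countable family of closed subvarieties of $G\times G$ that captures non-density of a pair (this is exactly the paper's Lemma~\ref{HH}), and there your argument has a genuine gap. The locus ``$(g_1,g_2)$ lies in a conjugate of a fixed maximal positive-dimensional closed subgroup $M$'' is the image of $G\times M\times M\to G\times G$, hence only constructible; you acknowledge this and pass to closures. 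But then your properness certificate no longer applies: the fact that $\rho^*(\Delta)$ is Zariski-dense shows only that $(\rho^*(a),\rho^*(b))$ lies outside the constructible locus, not outside its closure, which a priori can contain generating pairs (limits of pairs lying in $x_nMx_n^{-1}$ with $x_n$ escaping to infinity need not lie in any conjugate of $M$). So ``each proper because $\rho^*$ lies outside it'' does not follow as written; the same issue recurs in your Goursat step, where the set of pairs lying in some inner twist of the graph of an isomorphism between two isogenous factors is again only constructible, and your certificate (trivial on $\Delta$ in one factor, dense in the other) only excludes the constructible set.

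The gap is repairable, and the repair is what Lemma~\ref{HH} is designed to provide. Either (a) show the closure still consists of non-generating pairs, e.g.\ by observing that a flat (Hilbert-scheme) limit of conjugates of $M$ is again a subgroup scheme of the same dimension, so every pair in the closure lies in a proper closed subgroup; or (b) replace your loci by conditions that are manifestly closed and manifestly imply non-density, which is the paper's route: by \cite[Thm.~11.7]{gt} there are finitely many irreducible rational $G$-modules such that no positive-dimensional proper closed subgroup acts irreducibly on all of them, and ``acts reducibly on a fixed irreducible $G$-module'' is closed (properness of the Grassmannian) and contained in the non-dense locus, while finite images are caught by the closed loci $\{(g_1,g_2):|\langle g_1,g_2\rangle|\le m\}$; with this choice your certification by $\rho^*$ goes through verbatim, and it also sidesteps the appeal to the classification of maximal positive-dimensional subgroups. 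Two minor points: in characteristic $p$ every element of $G(\overline{\F}_p)$ has finite order, so when the free quotient $J$ is infinite cyclic you cannot send a generator to an element of infinite order over the algebraic closure of the prime field (use an element of sufficiently large finite order, or note injectivity is automatic: if $\rho(h)=1$ with $h\neq 1$, trivial center gives $\gamma$ with $[h,\gamma]\neq 1$, and then $\langle\rho(h),\rho(\gamma)\rangle$ is cyclic, contradicting the density you have already established); and your reduction to simple factors plus Goursat is legitimate, but the paper instead proves Lemma~\ref{HH} directly for semisimple simply connected $G$, folding the diagonal subgroups into the same countable family.
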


Since $\Gamma$ has trivial center, it suffices to prove the theorem in the simply connected case, so we assume henceforth that $G$ is simply connected.   We do not need infinite transcendence degree to make the argument work, but the degree needed with this argument grows linearly with $\dim G$.

Let $\Sigma_g$ be the fundamental group of a Riemann surface of genus $g$.   It is well known that
$\mathrm{Hom}(\Sigma_g, G)$  is an irreducible variety for any simply connected simple algebraic group $G$.
This is shown by Simpson \cite[Thm. 11.1]{Simpson} for the $\SL_n$ case (see also \cite{BCR}) and by
Liebeck and Shalev \cite[Cor.~1.11(ii)]{LS} for the general case (note that they only claim the irreducibility of the top dimensional component which is of dimension $(2g-1)\dim G$, but since $\mathrm{Hom}(\Sigma_g, G)$ is the preimage of the identity by an algebraic morphism, each component must be top dimensional). It is also well known that $\Sigma_g, g \ge 2$, is residually free (cf.  \cite[Cor. 2.2]{breuillard-gelander3}) and so:

\begin{corollary}  There exist strongly dense embeddings of $\Sigma_g, g \ge 2$ into $G(K)$ for $K$ an 
algebraically closed field of infinite transcendence degree in any characteristic and $G$ any semisimple group over $K$.
\end{corollary}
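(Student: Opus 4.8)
The plan is to deduce the corollary directly from Theorem~\ref{frf} applied to $\Gamma = \Sigma_g$, so the whole task reduces to verifying that $\Sigma_g$ with $g \ge 2$ satisfies the two hypotheses of that theorem.

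First I would check condition (1), that $\mathrm{Hom}(\Sigma_g,G)$ is an irreducible variety for every simply connected simple algebraic group $G$. Using the standard presentation $\Sigma_g = \langle a_1,b_1,\dots,a_g,b_g \mid \prod_{i=1}^g [a_i,b_i] = 1\rangle$, the representation variety is the fiber $\mu^{-1}(1)$ of the morphism $\mu\colon G^{2g}\to G$ sending $(x_i,y_i)_i$ to $\prod_{i=1}^g [x_i,y_i]$. By Simpson \cite[Thm.~11.1]{Simpson} in type $A$ and by Liebeck--Shalev \cite[Cor.~1.11(ii)]{LS} in general, the component of top dimension $(2g-1)\dim G$ is unique and irreducible. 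Since $\mu^{-1}(1)$ is nonempty and cut out inside $G^{2g}$ by $\dim G$ equations, every one of its components has dimension at least $\dim G^{2g} - \dim G = (2g-1)\dim G$, hence equals the top component; therefore $\mathrm{Hom}(\Sigma_g,G)$ is irreducible.

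Next I would verify condition (2). For $g \ge 2$ the group $\Sigma_g$ is the fundamental group of a closed orientable surface of negative Euler characteristic, hence a torsion-free non-elementary hyperbolic group; the centralizer of any nontrivial element is infinite cyclic, so in particular $Z(\Sigma_g) = 1$. Residual freeness of $\Sigma_g$ is a classical theorem of G.~Baumslag; see \cite[Cor.~2.2]{breuillard-gelander3}. With both hypotheses confirmed, Theorem~\ref{frf} produces strongly dense embeddings of $\Sigma_g$ into $G(K)$ whenever $K$ is algebraically closed of infinite transcendence degree (in any characteristic) and $G$ is semisimple over $K$, which is exactly the assertion of the corollary. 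I do not anticipate any genuine obstacle at this stage: the substantive work is entirely contained in Theorem~\ref{frf} and in the irreducibility results of Simpson and Liebeck--Shalev that it relies on, and the only mildly delicate point is the dimension count above upgrading ``the top-dimensional component is irreducible'' to ``the whole variety is irreducible.''
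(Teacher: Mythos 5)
Your proof is correct and follows essentially the same route as the paper: the authors also deduce the corollary from Theorem~\ref{frf} by citing Simpson and Liebeck--Shalev for irreducibility of $\Hom(\Sigma_g,G)$ (upgrading the top-dimensional-component statement by exactly your fiber-dimension argument) and \cite[Cor.~2.2]{breuillard-gelander3} for residual freeness, with triviality of the center being immediate.
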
  

We remark that Long, Reid, and Wolff \cite{LRW} use a similar strategy to show that generic Hitchin representations are strongly dense.

We begin with a lemma:

\begin{lemma}
\label{HH}
Let $G$ be a simply connected semisimple algebraic group over an algebraically closed field $K$. 
There exists a countable collection of proper closed subvarieties $Z_i$ of $G \times G$, each defined over the prime
subfield, such
that  $\cup_{i} Z_i$ is the set of all $(g_1,g_2)\in Z_i$, such that group $\langle g_1,g_2\rangle$ is not Zariski-dense in $G$.
\end{lemma}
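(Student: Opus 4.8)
The plan is to realize the $Z_i$ explicitly as vanishing loci of minors of ``evaluation matrices'' attached to word maps. Fix a split model of $G$ over the prime subfield $k_0$ (so that ``defined over $k_0$'' makes sense) and a closed embedding $G\hookrightarrow \A^N$ over $k_0$; let $W_d\subseteq \mathcal{O}(G)$ be the finite-dimensional, $k_0$-rational image of the polynomials of degree $\le d$, so $\mathcal{O}(G)=\bigcup_d W_d$. A subgroup $\Delta=\langle g_1,g_2\rangle\le G(K)$ is Zariski-dense in $G$ if and only if no nonzero element of $\mathcal{O}(G)\otimes_{k_0}K$ vanishes on $\Delta$, equivalently, if and only if for every $d\ge1$ there is no nonzero $f\in W_d\otimes_{k_0}K$ with $f(w(g_1,g_2))=0$ for all $w\in F_2$. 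Hence the set of non-dense pairs equals $\bigcup_{d\ge1}Y_d$, where $Y_d=\{(g_1,g_2)\in G\times G:\ \exists\,0\ne f\in W_d\otimes_{k_0}K,\ f|_{\langle g_1,g_2\rangle}=0\}$.

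The key step is to show each $Y_d$ is Zariski-closed and defined over $k_0$. Fix $d$, set $D=\dim_{k_0}W_d$, pick a $k_0$-basis $\phi_1,\dots,\phi_D$ of $W_d$, and enumerate $F_2=\{w_1,w_2,\dots\}$. For each $N\ge1$ let $M_N\colon G\times G\to \Mat_{N\times D}$ be the morphism whose $(j,i)$ entry is $\phi_i(w_j(g_1,g_2))$; it is defined over $k_0$ because the multiplication of $G$ — hence every word map $G\times G\to G$ — and the functions $\phi_i$ are defined over $k_0$. The right kernel of $M_N(g_1,g_2)$ is exactly the space of $f\in W_d\otimes_{k_0}K$ vanishing at $w_1(g_1,g_2),\dots,w_N(g_1,g_2)$; as $N$ grows these kernels are nested decreasing and, $W_d$ being finite-dimensional, stabilize (for fixed $(g_1,g_2)$) at the space of $f$ vanishing on all of $\langle g_1,g_2\rangle$. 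Therefore $(g_1,g_2)\in Y_d$ if and only if $\operatorname{rank}M_N(g_1,g_2)<D$ for every $N$, i.e.\ $Y_d=\bigcap_{N\ge1}\{\text{all }D\times D\text{ minors of }M_N\text{ vanish}\}$, an intersection of closed $k_0$-subvarieties and hence a closed $k_0$-subvariety of $G\times G$. Consequently $\{(g_1,g_2):\langle g_1,g_2\rangle\text{ not Zariski-dense}\}=\bigcup_{d\ge1}Y_d$ is a countable union of closed subvarieties of $G\times G$ defined over $k_0$.

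It remains to see each $Y_d$ is a \emph{proper} subvariety of $G\times G$, and this is the only point needing outside input: over a suitable algebraically closed extension $\Omega\supseteq k_0$ — say $\Omega=\overline{\Q(t)}$ if $\cha K=0$ and $\Omega=\overline{\F_p(s,t)}$ if $\cha K=p$ — the split group $G$ contains a Zariski-dense $2$-generated subgroup by Theorem~\ref{char 0} resp.\ Theorem~\ref{char p} (or, more classically, by the Tits alternative); the corresponding pair lies in no $Y_d$, so $Y_d\subsetneq G\times G$ as $k_0$-schemes and a fortiori $Y_d(K)\subsetneq (G\times G)(K)$. Discarding the empty $Y_d$'s, $\{Y_d\}_{d\ge1}$ is the required collection. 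I expect the closedness assertion to be the main obstacle: it is tempting instead to parametrize ``proper subgroups of bounded complexity'' by a Hilbert scheme and take the image of the incidence variety $\{(H,g_1,g_2):g_1,g_2\in H\}$, but that image is only constructible, and over a large field $K$ it is genuinely \emph{not} a countable union of $k_0$-rational closed sets — the evaluation-matrix formulation avoids this since a rank drop is a closed condition. The argument applies verbatim to $r$-tuples in $G^r$ in place of pairs, and uses nothing about the cardinality of $K$.
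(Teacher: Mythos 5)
Your argument is correct, but it is genuinely different from the one in the paper. The paper's proof splits the non-dense locus structurally: for $G$ simple it invokes \cite[Thm.~11.7]{gt} to get a finite list of irreducible $G$-modules on which no positive-dimensional proper closed subgroup acts irreducibly, so that the pairs generating a non-dense infinite subgroup fall into finitely many reducibility loci (closed by completeness of Grassmannians, as in Corollary~\ref{cor:pv}), and it handles finite subgroups by the countably many loci $X_m=\{(g_1,g_2):|\langle g_1,g_2\rangle|\le m\}$; for semisimple non-simple $G$ it adds a Goursat-type analysis of the countably many classes of maximal diagonal (graph) subgroups. You instead encode non-density directly: a pair is non-dense iff some nonzero regular function of degree $\le d$ vanishes on the generated subgroup, and for fixed $d$ this is the simultaneous vanishing of all $D\times D$ minors of the word-evaluation matrices $M_N$, hence a closed condition defined over the prime field; the finite-dimensionality of $W_d$ (stabilization of the nested kernels) is exactly what makes the infinite conjunction over words equivalent to the existence of a single function vanishing on the whole subgroup. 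This avoids \cite{gt}, the simple/semisimple case division, and the diagonal-subgroup analysis, and as you note it works verbatim for $r$-tuples and for any connected linear group; what it does not give is the structural description of the $Z_i$ (reducibility loci for explicit modules, finiteness loci, diagonal loci) that the paper's version provides and that can be useful elsewhere. Two small points: properness of your $Y_d$ does need the existence of one Zariski-dense pair over some extension of the prime field, and the Tits alternative alone does not supply this (it gives free, not necessarily dense, subgroups); but your primary citation of Theorems~\ref{char 0} and \ref{char p} is non-circular, since Lemma~\ref{HH} plays no role in their proofs (one could also quote \cite{BGGT} or \cite[Theorem 4.1]{BGGT} as the paper itself does in the remark following the lemma). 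Also, for $N<D$ the rank condition is vacuous, so one should simply take $N\ge D$; this is cosmetic.
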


\begin{proof}
We first assume that $G$ is simple and simply connected.   Let $k$ be the algebraic closure of the prime field. 
By \cite[Thm. 11.7]{gt},   there exists a finite set of irreducible rational $G$-modules (defined over $k$) so that no
positive dimensional closed subgroup of $G$ acts irreducibly on each of those modules.   The set of pairs acting reducibly
for each module is a proper closed subvariety.   

Now consider the proper subvarieties $X_m = \{(g_1, g_2) \in G \times G | |\langle g_1, g_2 \rangle | \le m\}$.  These are defined over the prime field.
The result now follows in this case by taking the $Z_i$ to be the finite set of subvarieties given by the modules together with the countably
many subvarieties $X_m$.

If $G$ is simply connected but not simple, then $G = G_1\times \cdots\times G_N$
for some simply connected simple groups $G_i$, and
every maximal subgroup of $G$ is either the pull-back of a maximal subgroup of some $G_i$ or the pull-back of the graph of a surjective endomorphism between the adjoint quotients of two factors, $G_i$ and $G_j$.  We have already dealt with the first class of subgroups.    Up to the action of $G$,  the maximal diagonal
subgroups correspond to compositions of outer automorphisms and Frobenius endomorphisms and in particular there are only countably many such
and each is defined over a finite extension of the prime field giving rise to countably many conjugacy classes of maximal closed diagonal subgroups $D_i$.
For each $D_i$, we consider the subvariety which is the closure of $\cup_{g \in G} (D_i \times D_i)^g$.  Clearly these are proper subvarieties (as generic elements
are not contained in any diagonal subgroup).  
\end{proof}

One can also show that the complement of the union of the subvarieties above is dense as long as $K$ is not algebraic over a finite field
(if $K$ is uncountable, this is clear).    An alternate proof of the previous result can be obtained by noting that there are only countably many
maximal proper closed subgroups (maximal in the category of closed subgroups) and they are all defined over the algebraic closure of the prime field. 
In characteristic $0$, one already knows that  the set of generating pairs is a nonempty open subset  (see \cite[Theorem 4.1]{BGGT}).  

We can now deduce Theorem~\ref{frf} from the analogous result on free groups.
\vskip 10pt

\begin{proof}
By condition (i), $\Hom(\Gamma,G)$ cannot be written as a countable union of proper closed subvarieties. So it is enough to prove that given any pair $\gamma_1,\gamma_2$ of non-commuting elements in $\Gamma$, and for each closed subvariety $Z_i$ from Lemma 8.3, the closed subvariety $W_{i,\gamma_1,\gamma_2}$ of $\Hom(\Gamma,G)$ made of those representations $\rho$ such that the pair $(\rho(\gamma_1),\rho(\gamma_2))$ belongs to $Z_i$ is proper. By \cite{BGGT}, there is a strongly dense free subgroup in $G$ given say by some injective homomorphism $\pi: F_2 \to G$. By condition (ii) there is a homomorphism $\phi:\Gamma \to F_2$ such that $\phi([\gamma_1,\gamma_2]) \neq 1$. The representation $\rho:=\pi \circ \phi$ is not in $W_{i,\gamma_1,\gamma_2}$, because $\langle \rho(\gamma_1),\rho(\gamma_2)\rangle$ is Zariski-dense. So $W_{i,\gamma_1,\gamma_2}$ is proper as desired.
\end{proof}

We can also extend this result (essentially via the  proof of \cite{BGGT} for free groups) to finitely generated groups $\Gamma$
such that all representation varieties are irreducible and  satisfy the Borel property (i.e. word maps are dominant 
-- see \cite{BL}).     This argument works aside from the case of groups involving $C_2$ in characteristic $3$ 
(just as the proof in \cite{BGGT} did -- that case was handled in \cite{BGGT2}).   

To give another application of Theorem~\ref{frf}, we introduce the following terminology.
A word in $F_d$ is $(N,l)$-\emph{friable} if it is a justaposition of at least $N$ non-empty words of length $\le l$ in pairwise distinct variables.  Recall that by the \emph{Baumslag double} of a word 
$w = w(x_1,\ldots,x_d)\in F_d$, we mean the one-relator group on $2d$-generators
$$\langle x_1,\ldots,x_d,y_1,\ldots,y_d\mid w(x_1,\ldots,x_d)w(y_1,\ldots,y_d)^{-1}\rangle.$$

\begin{theorem}
For all $l$ there exists $N$ such that if $w\in F_d$ is $(N,l)$-friable, and $\Gamma$ is the Baumslag double of $w$, then for every semisimple group $G$ over an algebraically closed field of infinite transcendence degree, $G(K)$ contains a strongly dense subgroup isomorphic to $\Gamma$.
\end{theorem}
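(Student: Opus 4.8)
The plan is to deduce this from Theorem~\ref{frf}, so the task reduces to verifying that for a suitable choice of $N$ (depending on $l$), the Baumslag double $\Gamma$ of an $(N,l)$-friable word $w$ satisfies conditions (i) and (ii) of that theorem. Condition (ii) is the easy half: $\Gamma$ is residually free because a Baumslag double is always residually free (the two copies of $F_d$ generated by the $x_i$ and the $y_i$ each inject, and one can detect any nontrivial element by mapping to one of these free factors), and $\Gamma$ has trivial center as soon as $w$ is not a proper power and $d\ge 1$, which holds once $w$ is a nonempty friable word. So the whole game is condition (i): irreducibility of $\Hom(\Gamma,G)$ for every simply connected simple $G$.

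First I would set up the geometry of $\Hom(\Gamma,G)$. Writing $n=2d$ for the number of generators, $\Hom(\Gamma,G)$ is the fiber over the identity of the word map $G^{n}\to G$, $(g,h)\mapsto w(g)w(h)^{-1}$; equivalently it is the preimage of the diagonal under $(g,h)\mapsto (w(g),w(h))\colon G^{2d}\to G\times G$, i.e. the fiber product $G^d\times_{w,G,w} G^d$ of two copies of the word-map morphism $w\colon G^d\to G$. The key input is that for a friable word the word map $w\colon G^d\to G$ is not merely dominant but \emph{flat with irreducible general fibers of the expected dimension}, provided $N$ is large enough relative to $l$ and $\dim G$. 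This is exactly the kind of statement proved via the juxtaposition/concatenation structure: if $w=u_1u_2\cdots u_N$ with the $u_j$ in disjoint variables of length $\le l$, then $w$ factors as the $N$-fold multiplication composed with $\prod_j u_j$, and one shows that after enough factors the fibers of the product of the individual short word maps become equidimensional and the iterated multiplication map has irreducible fibers. I would quote the relevant friability/fiber-irreducibility result (of the type in \cite{BGGT} or \cite{LS}) to get: for $N\ge N_0(l,\dim G)$, the morphism $w\colon G^d\to G$ is faithfully flat with geometrically irreducible fibers.

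Granting that, irreducibility of $\Hom(\Gamma,G)$ follows from a standard fiber-product argument: if $f\colon X\to G$ and $f'\colon Y\to G$ are both faithfully flat with geometrically irreducible fibers and $G$ is irreducible, then $X\times_G Y$ is irreducible. Indeed the projection $X\times_G Y\to X$ is faithfully flat (base change of $f'$) with geometrically irreducible fibers, and $X$ is irreducible (being the source $G^d$, which is irreducible, not even just the image needing flatness of $f$), so $X\times_G Y$ is irreducible by the standard criterion ("flat with irreducible fibers over an irreducible base $\Rightarrow$ irreducible total space"; see e.g. \cite[Tag 0559 and Tag 004Z]{Stacks}). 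Here $X=Y=G^d$ with the morphism $w$, both faithfully flat with geometrically irreducible fibers by the friability input, so $\Hom(\Gamma,G)=G^d\times_{w,G,w}G^d$ is irreducible. Then, having checked (i) and (ii), Theorem~\ref{frf} applies and yields strongly dense embeddings of $\Gamma$ into $G(K)$ for every semisimple $G$ over an algebraically closed $K$ of infinite transcendence degree.

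The main obstacle is the quantitative flatness-and-irreducibility statement for the word map of a friable word, i.e. pinning down $N_0(l,\dim G)$ so that $w\colon G^d\to G$ has geometrically irreducible fibers of the expected dimension. Once $w$ is a juxtaposition of $N$ short words in disjoint variables, the map is a composition of the "long multiplication" $G^N\to G$ with $\prod u_j\colon G^d\to G^N$, and the point is that multiplying together many copies of the (nonconstant, hence dominant) image distributions smooths things out; the honest work is controlling dimensions of the bad loci of the individual short word maps $u_j\colon G^{d_j}\to G$ and showing that after $N_0$ convolutions the fiber over a general point — and then, by flatness, over \emph{every} point, in particular over the identity — is irreducible. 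I would either cite this directly from the friability results already in the literature or, if a self-contained argument is wanted, reduce it to the statement that a sufficiently high self-convolution of any dominant morphism $G^m\to G$ becomes flat with irreducible fibers, which is a dimension count plus an application of the irreducibility criterion for fibers of flat morphisms over an irreducible base together with Stein factorization to handle connectedness.
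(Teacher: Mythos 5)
Your reduction to Theorem~\ref{frf} is the right frame (it is also the paper's), but your verification of condition (i) has a genuine gap. The input you rely on --- that for $N\ge N_0(l,\dim G)$ the word map $w\colon G^d\to G$ is faithfully flat with geometrically irreducible fibers over \emph{every} point --- is not something you can quote from \cite{BGGT} or \cite{LS}: the former uses only Borel's dominance theorem, and the latter concerns representation varieties of Fuchsian groups, not fibers of friable word maps. Worse, your sketch of how one would prove it contains a false inference: flatness plus irreducibility of the \emph{general} fiber does not give irreducibility of every fiber (a flat family of irreducible varieties can degenerate to a reducible one, e.g.\ a conic degenerating to a pair of lines), and the fiber you actually need, the one over the identity, is exactly the delicate one --- it is the representation variety of the one-relator group $F_d/\langle\langle w\rangle\rangle$, an object of the same nature and difficulty as the $\Hom(\Gamma,G)$ you set out to control, so the reduction gains nothing. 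Finally, even if one imported convolution-flatness machinery to repair this, an $N_0$ depending on $\dim G$ does not prove the theorem as stated: the quantifiers require $N$ to depend on $l$ alone, uniformly over all semisimple $G$.

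The paper obtains condition (i) by counting instead: by \cite[Theorem~5(ii)]{larsen-shalev-tiep} (whose character-theoretic inputs \cite{glt}, \cite{gluck} are proved for quasisimple groups, hence apply to $G(\F_q)$ with $G$ simply connected), once $N$ is large in terms of $l$ \emph{only}, $|\Hom(\Gamma,G(\F_q))|$ has the expected order of magnitude uniformly in the group; Lang--Weil \cite{lang-weil} then forces $\Hom(\Gamma,G)$ to be geometrically irreducible of the expected dimension in positive characteristic, and hence also in characteristic $0$. This is precisely what makes $N$ independent of $G$. Separately, your justification of condition (ii) is incorrect as stated: a Baumslag double is \emph{not} always residually free --- if $w=u^k$ with $k\ge 2$, the element $[u(x_1,\ldots,x_d),u(y_1,\ldots,y_d)]$ is nontrivial in the double but dies in every free quotient, since roots are unique in free groups --- and retracting onto one of the two visible free factors certainly does not detect all nontrivial elements. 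What is true, and what the paper uses, is Baumslag's theorem \cite{baumslag} that the double is residually free when $w$ is not a proper power, together with the observation that an $(N,l)$-friable word with $N>1$ is not a proper power; with that emendation condition (ii) is fine, but the heart of the theorem, condition (i) with $N$ uniform in $G$, is missing from your argument.
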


\begin{proof}
Condition (i) holds if $N$ is large enough compared to $l$.
By \cite[Theorem~5(ii)]{larsen-shalev-tiep}, if $N$ is large enough in terms of $l$,
$$|\Hom(\Gamma,H)| = (1+o(1))|H|^{d-1}$$
for finite simple groups $H$.  However,
the character estimate for groups of Lie type used in the proof, namely  \cite[Thm~1.4]{glt} and \cite{gluck}, are both proved, in those papers, in the quasisimple case, and therefore 
also for groups of the form $\tilde H\cong G(\F_q)$, where $G$ is simple and simply connected.  
 
By Lang-Weil \cite{lang-weil}, letting $q\to \infty$, it follows that in positive characteristic $\Hom(\Gamma,G)$ is geometrically irreducible and of dimension $(d-1)\dim G$.  Therefore, the same is true in characteristic $0$.  

As long as $N>1$, no $(N,l)$-friable word is a non-trivial power of another word, so $\Gamma$ is  residually free \cite{baumslag}.  Therefore, the theorem follows from Theorem~\ref{frf}.

\end{proof}

Up to this point, the results of this section require that the transcendence degree of $K$ is large enough in terms of the dimension of $G$.  However, it is also possible to use the methods of sections 3--6 of this paper to prove certain $\dim G$-independent results.

\begin{theorem}
\label{trans.deg.one}
Let $K$ be a transcendental algebraically closed extension of $\Q$. Let $G$ be a classical group over $K$ and $\Gamma$ be a finitely generated group with $\Hom(\Gamma,G)$ irreducible and which admits a strongly dense embedding in $\SL_2(\bar \Q)$. Then $\Gamma$ admits a strongly dense embedding in $G(K)$.
\end{theorem}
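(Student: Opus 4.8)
Let me think about how to prove this. We have $K$ a transcendental algebraically closed extension of $\mathbb{Q}$, $G$ a classical group over $K$, and $\Gamma$ finitely generated with $\mathrm{Hom}(\Gamma, G)$ irreducible, and we're given that $\Gamma$ admits a strongly dense embedding in $\mathrm{SL}_2(\bar{\mathbb{Q}})$.

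The strategy should mirror the proof of the main theorems (Theorems \ref{char 0}, \ref{char p}) but replacing "free group $F_2$" with "$\Gamma$" throughout, using the degeneration machinery and special subgroups. The key difference from the free case: instead of using the Tits alternative / Lemma \ref{A1} to get strongly dense free subgroups of the special subgroups $H_i(K_0)$, we need strongly dense copies of $\Gamma$ inside the special subgroups. Since the special subgroups (after choosing almost abelian $K_0$-forms) are products $\mathrm{SL}_1(D_1) \times \cdots \times \mathrm{SL}_1(D_m)$, and $\Gamma$ has a strongly dense embedding in $\mathrm{SL}_2(\bar{\mathbb{Q}})$, hence (by a specialization/spreading-out argument) a strongly dense embedding in $\mathrm{SL}_1(D_j)$ for each $j$ over a global field $K_0$, we can build strongly dense embeddings $\Gamma \hookrightarrow H_i(K_0)$ by a Goursat-type argument exactly as in Lemma \ref{A1}: project to each factor $\mathrm{SL}_1(D_j)$, each projection is Zariski-dense because its closure would be in a normalizer of a torus (Lemma \ref{normalizer}) but $\Gamma$ is non-abelian and its image can be arranged not to lie in any such normalizer. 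The restriction to classical $G$ enters because Theorem \ref{generating sets} gives generating sets of special subgroups for classical groups without the characteristic-$3$/$C_2$ exception and without needing the hard exceptional-group case.

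Here is the sequence of steps I would carry out. First, reduce to $G$ simply connected (or adjoint) and, by Goursat, to $G$ simple of classical type — the semisimple case is assembled exactly as in Section \ref{main theorems}, using part (4) of Theorem \ref{generating sets} and the irreducibility of $\mathrm{Hom}(\Gamma, J)$ which follows from the hypothesis applied to each simple factor. Second, fix a global field $K_0 \subset K$ with $K$ of transcendence degree $\ge 1$ over $K_0$ (take $K_0 = \mathbb{Q}$ so that $K \supset \mathbb{Q}(t)$); use Theorem \ref{generating sets} to get a generating set $\{(H_1)_{\bar K}, \ldots, (H_s)_{\bar K}\}$ of special subgroups of $G_{\bar K}$, realized over $K_0$ as almost abelian groups $H_i$ with the ramification-disjointness property. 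Third, show $\Gamma$ admits a strongly dense embedding into $H_i(K_0)$ for each $i$: since $\Gamma \hookrightarrow \mathrm{SL}_2(\bar{\mathbb{Q}})$ strongly densely, the closure of this embedding being $\mathrm{SL}_2$, we may spread out to get $\Gamma \hookrightarrow \mathrm{SL}_1(D_{i,j})(K_0)$ with Zariski-dense image for each division algebra factor (this is where one needs to know that a Zariski-dense representation of $\Gamma$ in $\mathrm{SL}_2$ over $\bar{\mathbb{Q}}$ descends appropriately — a point to handle carefully), then combine over the factors as in Lemma \ref{A1} using Lemma \ref{normalizer}. Fourth, invoke Proposition \ref{crt} to find a finitely generated $K_0$-domain $A$ with fraction field $K_0(t)$, maximal ideals $\mathfrak{m}_1, \ldots, \mathfrak{m}_s$, and a homomorphism $f \colon \Gamma \to G(A)$ specializing modulo $\mathfrak{m}_i$ to the chosen embedding into $H_i(K_0)$. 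Fifth: the composed $f$ is injective (since each specialization is), and for every non-abelian subgroup $\Delta \subset \Gamma$ the Zariski closure $H$ of $f(\Delta)$ in $G(K_0(t))$ degenerates to the closure of the image of $\Delta$ in $H_i(K_0)$, which is all of $H_i$ (strong density of the embedding into $H_i$); so $H_i \prec H$ for all $i$, whence $H = G$ by the generating property. Finally, transfer from $K_0(t)$ to $K$: since $\mathrm{Hom}(\Gamma, G)$ is irreducible and $f$ defines a point of the representation variety satisfying no proper closed condition relevant to strong density, and $K$ has transcendence degree $\ge 1$ over $\mathbb{Q}$ hence contains $\bar{\mathbb{Q}(t)}$, we get the strongly dense embedding over $K$ — here the irreducibility hypothesis on $\mathrm{Hom}(\Gamma, G)$ is what lets a single transcendental avoid all the bad loci, in the spirit of the discussion at the end of Section 2.

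The main obstacle, I expect, is the third step: producing a \emph{strongly dense} (not merely Zariski-dense) embedding of $\Gamma$ into each almost abelian $H_i(K_0)$ over a global field. For free groups this is Lemma \ref{A1}, which uses the Tits alternative to get a free subgroup with non-abelian restrictions all dense. For general $\Gamma$ one must instead start from the given strongly dense embedding into $\mathrm{SL}_2(\bar{\mathbb{Q}})$ and (a) descend it to be defined over a number field, then realize $\mathrm{SL}_2$ as $\mathrm{SL}_1(D)$ for a suitable quaternion algebra $D$ over a global field by a further base extension — using that any Zariski-dense subgroup of $\mathrm{SL}_2$ over a number field can be conjugated inside a division-algebra form, or more simply picking the global field and $D$ after the fact to contain the given representation — and (b) check that strong density is preserved, i.e., that every non-abelian subgroup of the image is still dense, which reduces via Goursat and Lemma \ref{normalizer} to the single-factor statement that no non-abelian subgroup of a strongly dense subgroup of $\mathrm{SL}_2$ lands in the normalizer of a torus — and that is immediate from strong density in $\mathrm{SL}_2$ itself. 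So the real content is the descent/realization in (a); once that is in place the rest runs parallel to the free-group proof. A secondary point worth care is the final transfer to $K$: one should verify that "strongly dense" is detected by a countable intersection of open conditions on $\mathrm{Hom}(\Gamma, G)$ (for each non-commuting pair $\gamma_1, \gamma_2$ and each proper closed subgroup-type, as in Lemma \ref{HH}), that $f$ lies outside all of them, and hence — by irreducibility of $\mathrm{Hom}(\Gamma, G)$ — a transcendental point over $\mathbb{Q}$ does too.
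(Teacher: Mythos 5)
Your proposal reproduces the skeleton of the free-group argument (specialize to a generating family of subgroups, interpolate along a curve, apply Theorem~\ref{degeneration}), but it has two genuine gaps, and they occur exactly at the points where this theorem differs from the free case. First, your step producing strongly dense copies of $\Gamma$ in almost abelian forms $H_i(K_0)$ over a global field rests on the claim that the given strongly dense subgroup of $\SL_2(\bar\Q)$ can be descended and conjugated into $\SL_1(D)(K_0)$ for a quaternion division algebra $D$ over a global field; you flag this as the main obstacle but do not solve it, and the auxiliary claim you lean on (``any Zariski-dense subgroup of $\SL_2$ over a number field can be conjugated inside a division-algebra form'') is false in general -- for instance the subgroup may contain nontrivial unipotents, which cannot occur in $\SL_1(D)\subset D^\times$, and in any case such a realization depends on the arithmetic of the trace field and is not automatic. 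The paper avoids this entirely: since $G$ is classical and $\cha K=0$, Theorem~\ref{generating sets}(2),(3) supplies a \emph{very special} generating set, i.e.\ each $H_i$ is the image of a homomorphism $\SL_2\to G$, so one defines $f_i\colon \Gamma\to H_i(\bar\Q)$ simply by composing the given embedding $\Gamma\to\SL_2(\bar\Q)$ with $\SL_2\to H_i$; density of every non-abelian subgroup in $H_i$ is then immediate, with no global fields, division algebras, Lemma~\ref{A1} or Lemma~\ref{normalizer} involved. (This is also the real reason for the classical hypothesis: it is what guarantees a \emph{very special} generating set, not the avoidance of the $C_2$/characteristic~$3$ or exceptional cases.)

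Second, your interpolation step via Proposition~\ref{crt} does not work for a non-free $\Gamma$: that proposition only interpolates tuples of group elements, i.e.\ points of $G^d$, and a curve through the chosen points has no reason to satisfy the defining relations of $\Gamma$ away from those points, so it does not produce a homomorphism $\Gamma\to G(A)$. This is precisely where the irreducibility of $\Hom(\Gamma,G)$ must be used: the paper applies a blow-up/Bertini argument to find an irreducible curve \emph{inside} $\Hom(\Gamma,G)$ passing through the finitely many points $f_i$, restricts the universal representation to its coordinate ring $A$, and then Theorem~\ref{degeneration} gives $H_i\prec H$ for the Zariski closure $H$ of the image of any non-abelian $\Delta$ over the fraction field $K_0$ of $A$, whence $H=G$; finally $K_0$, having transcendence degree~$1$, embeds in $K$, and strong density is preserved under this scalar extension. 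Your plan instead invokes irreducibility only at the end, to avoid countably many bad loci as in Lemma~\ref{HH}; this does not repair the interpolation gap, and an ``avoid countably many proper closed subvarieties'' argument is in any case not available at transcendence degree~$1$ -- circumventing it is the whole point of the curve-plus-degeneration method.
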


\begin{proof}
Without loss of generality, we may assume that $G$ comes from a split group (which we also denote $G$) defined over $\Q$. 

In view of Theorem \ref{generating sets} (2) and (3), there exists a very special generating set  $\{H_i\}$ for $G$ and a finite collection of representations $f_i\colon \Gamma\to H_i(\bar\Q)\subset  \SL_n(\bar\Q)$ such that for every non-abelian subgroup $\Delta$ of $\Gamma$,  $f_i(\Delta)$ is Zariski-dense in $H_i(K)$.
The $f_i$ define points on the variety $\Hom(\Gamma,G)$, which is irreducible.

For any finite set $S$ of points on an irreducible quasi-projective variety $V$, there exists an irreducible curve containing $S$.  Indeed, one can blow up the points in $S$, embed the resulting variety $\tilde V$ in some projective space, and use Bertini's theorem to choose a linear subspace of codimension $\dim V-1$ intersecting $\tilde V$ in an irreducible curve whose image in $V$  contains $S$.  Applying this to the points on $V=\Hom(\Gamma,G)$ corresponding to the $f_i$, we obtain an irreducible affine curve with coordinate ring $A$. The universal $G$-representation of $\Gamma$ over $V$ specializes to a homomorphism $\Gamma\to G(A)$ which further specializes to each of the $f_i$ and is therefore injective.  If $K_0$ is the field of fractions of $A$, as the $\{H_i\}$ form a generating set, $\Gamma\to G(K_0)$ is strongly dense.  However, $K_0$ is of transcendence degree $1$, so it embeds in $K$.
\end{proof}

\begin{corollary}
\label{surface char 0}
If $K$ be a transcendental algebraically closed extension of $\Q$ and $G$ is a classical group over $K$, then for each $g \ge 2$, $G(K)$ contains a strongly dense subgroup isomorphic to a surface group of genus $g$.
\end{corollary}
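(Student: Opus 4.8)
The plan is to deduce the corollary from Theorem~\ref{trans.deg.one} applied to $\Gamma=\Sigma_g$, the fundamental group of a compact orientable surface of genus $g\ge 2$. Two hypotheses must then be verified: that $\Hom(\Sigma_g,G)$ is irreducible, and that $\Sigma_g$ admits a strongly dense embedding in $\SL_2(\bar\Q)$. The first is the classical irreducibility of $\Hom(\Sigma_g,G)$ recalled after Theorem~\ref{frf} (Simpson for $G=\SL_n$, Liebeck--Shalev in the remaining classical cases), every component being forced to the top dimension $(2g-1)\dim G$ because $\Hom(\Sigma_g,G)$ is the fibre over the identity of the word map attached to the single surface relator; so nothing new is needed there.

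For the second hypothesis I would build the embedding from an arithmetic hyperbolic structure. First realize $\Sigma_2$ as a cocompact torsion-free Fuchsian group $\Gamma_0\subset\PSL_2(\R)$ that is arithmetic --- for instance the fundamental group of the Bolza surface, a torsion-free finite-index subgroup of an arithmetic triangle group --- and for $g\ge 3$ realize $\Sigma_g$ as a degree-$(g-1)$ unramified cover of $\Sigma_2$, i.e.\ as a finite-index subgroup $\Gamma_0'$ of $\Gamma_0$; in either case $\Gamma_0'$ is a cocompact torsion-free Fuchsian group, and, being arithmetic, is conjugate into $\PSL_2$ of a number field, hence into $\PSL_2(\bar\Q)$. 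This gives a faithful homomorphism $\bar\rho\colon\Sigma_g\to\PSL_2(\bar\Q)$ with image $\Gamma_0'$. Since $\chi(\Sigma_g)=2-2g$ is even, the mod-$2$ Euler obstruction to lifting $\bar\rho$ through $\SL_2\to\PSL_2$ vanishes, so $\bar\rho$ lifts to $\rho\colon\Sigma_g\to\SL_2(\bar\Q)$, and $\rho$ is faithful because $\ker\rho\subseteq\ker\bar\rho=\{1\}$.

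It then remains to check that $\rho$ is strongly dense. For a nonabelian subgroup $\Delta\le\Sigma_g$, the image $\bar\rho(\Delta)$ is a nonabelian subgroup of the discrete torsion-free group $\Gamma_0'\subset\PSL_2(\R)$, hence non-elementary: a discrete torsion-free subgroup of $\PSL_2(\R)$ contained in a maximal compact subgroup is trivial, while one contained in a Borel subgroup or in the normalizer of a maximal torus is forced by discreteness and torsion-freeness to be abelian. A non-elementary subgroup of $\PSL_2(\R)$ is Zariski-dense in $\PSL_2$, so the Zariski closure of $\rho(\Delta)$ in $\SL_2$ surjects onto $\PSL_2$, has dimension $3$, and therefore equals $\SL_2$. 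Thus every nonabelian subgroup of $\rho(\Sigma_g)$ is Zariski-dense in $\SL_2$, so $\rho$ is a strongly dense embedding.

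With both hypotheses of Theorem~\ref{trans.deg.one} in hand for $\Gamma=\Sigma_g$, the theorem produces a strongly dense copy of $\Sigma_g$ in $G(K)$, which is the assertion of the corollary. The step I expect to require the most care is making the embedding into $\SL_2$ defined over $\bar\Q$ rather than merely over $\C$ --- this is precisely what the appeal to arithmeticity achieves --- together with the dichotomy for discrete torsion-free subgroups of $\PSL_2(\R)$ and the automatic lift from $\PSL_2$ to $\SL_2$.
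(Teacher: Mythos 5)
Your proposal is correct and follows the same overall reduction as the paper: both proofs verify the two hypotheses of Theorem~\ref{trans.deg.one} for $\Gamma=\Sigma_g$ (irreducibility of $\Hom(\Sigma_g,G)$ via Simpson and Liebeck--Shalev, and a strongly dense embedding into $\SL_2(\bar\Q)$) and then quote that theorem. Where you genuinely diverge is in producing the $\bar\Q$-point: the paper takes an arbitrary Fuchsian uniformization, lifts it to $\SL_2(\R)$ citing \cite{AAS}, and then uses Weil's result that the locus of representations that are discrete and faithful into $\PSL_2(\R)$ is open and non-empty in $\Hom(\Sigma_g,\SL_2(\R))$ to find such a representation with entries in $\bar\Q$; you instead start from an explicitly arithmetic cocompact torsion-free Fuchsian group (the Bolza group), pass to a finite-index subgroup for higher genus, use arithmeticity to conjugate into $\PSL_2(\bar\Q)$, and lift via the vanishing of the mod-$2$ Euler obstruction. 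Both routes work; yours trades Weil's openness/density-of-algebraic-points argument for arithmeticity, at the cost of some bookkeeping (strictly, the conjugated image is a $\PGL_2(\C)$-conjugate of $\Gamma_0'$ rather than $\Gamma_0'$ itself and need not sit in $\PSL_2(\R)$, so your discreteness-based dichotomy should be run on $\Gamma_0'$ and transported by conjugation, under which Zariski-density is invariant --- a harmless but worth-stating step). For the strong-density verification the paper's argument is also a bit slicker and field-free: every proper algebraic subgroup of $\SL_2$ is virtually solvable, while a surface group has no non-abelian virtually solvable subgroups; this avoids the case analysis of discrete torsion-free subgroups of Borels, torus normalizers and compact subgroups that you carry out (correctly).
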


\begin{proof}Every Riemann surface of genus $g\ge 2$ can be realized as a quotient of the upper half-plane by a subgroup of $\PSL_2(\R)$ isomorphic to the surface group  $\Gamma$.  It is also well known that the injective homomorphism $\Gamma\to \PSL_2(\R)$ lifts to a (necessarily injective) homomorphism $\Gamma\to \SL_2(\R)$ (see, e.g., \cite{AAS} for a short argument).  The subset of $\Hom(\Gamma, \SL_2(\R))$ such that the map $\Gamma\to \PSL_2(\R)$ is injective with discrete image is a non-empty open subset of $\Hom(\Gamma,\SL_2(\R))$ (\cite{Weil}).  Therefore, there exists an injective homomorphism $\phi\colon \Gamma\to \SL_2(\bar{\Q})$. Its image is necessarily strongly dense, because proper algebraic subgroups of $\SL_2$ are virtually solvable and $\Gamma$ has no non-abelian virtually solvable subgroup. So we may apply the previous theorem.
\end{proof}

We remark that, at least when $G=\SL_n$, the group of $G$-representations of a surface group is rational \cite{BCR}, so one might hope to find a rational curve containing the points corresponding to the $f_i$.  If this can be done, we can dispense with the assumption that $K$ is algebraically closed.

\begin{theorem}
Let $p > 2$ be a prime and $n,g\ge 2$ integers.
Let $K$ be an algebraically closed field of transcendence degree $10$ over $\F_p$.
Then $\SL_n(K)$ has a strongly dense subgroup isomorphic to the surface group of genus $g$.
\end{theorem}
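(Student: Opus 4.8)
The plan is to run the argument of Theorem~\ref{trans.deg.one}, adapting it to characteristic $p$. The only ingredient of that proof not available here is a strongly dense embedding of the surface group into $\SL_2$ over a field of \emph{small} transcendence degree: in characteristic $0$ one uses the Fuchsian (Teichm\"uller) representation to embed $\Sigma_g$ in $\SL_2(\overline{\QQ})$, a field of transcendence degree $0$, but over $\overline{\FF_p}$ there is no injective homomorphism from an infinite surface group into $\SL_2$. This I would supply differently, and the first move is to reduce to $g=2$. For every $g\ge 2$ the genus $g$ surface is a connected $(g-1)$-fold cover of the genus $2$ surface (Riemann--Hurwitz: $2-2g=(g-1)(2-4)$, and such covers exist, e.g.\ pulling back from a degree $g-1$ cover corresponding to a subgroup of $\pi_1$ of index $g-1$). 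Hence $\Sigma_g$ embeds as a subgroup of index $g-1$ in $\Sigma_2$, and since any non-abelian subgroup of $\Sigma_g$ is in particular a non-abelian subgroup of $\Sigma_2$, a strongly dense embedding of $\Sigma_2$ in $\SL_n(K)$ restricts to a strongly dense embedding of $\Sigma_g$. So it suffices to treat $\Sigma_2$, and the transcendence degree becomes independent of $g$.

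\textbf{A strongly dense $\Sigma_2\hookrightarrow\SL_2$ over a degree $9$ field.} The variety $V=\Hom(\Sigma_2,\SL_2)$ is irreducible of dimension $(2\cdot 2-1)\cdot 3=9$; irreducibility in characteristic $p$ follows, as in the proof of the theorem on Baumslag doubles above, from the count $|\Hom(\Sigma_2,H)|=(1+o(1))|H|^{3}$ for finite quasi-simple groups $H$ of Lie type (Frobenius's formula together with the Witten zeta estimates of \cite{LS}, valid in the quasi-simple case) via Lang--Weil \cite{lang-weil}. Let $E$ be the function field of $V$, a field of transcendence degree $9$ over $\FF_p$, and let $\rho_0\colon\Sigma_2\to\SL_2(E)$ be the universal representation, i.e.\ the one at the generic point. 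Since $\Sigma_2$ is residually free \cite{breuillard-gelander3} and every finitely generated free group embeds in $\SL_2(\FF_p((t)))$ (a linear group over a field, not virtually solvable, so by the Tits alternative \cite{tits72}, or by ping-pong on the Bruhat--Tits tree), for each $\gamma\in\Sigma_2$ with $\gamma\neq 1$ the closed subvariety $\{f\in V: f(\gamma)=1\}$ is proper; as $V$ is irreducible its generic point lies in none of these countably many subvarieties, so $\rho_0$ is injective. Likewise $\rho_0(\Sigma_2)$ is Zariski-dense, because the non-Zariski-dense locus in $V$ is a countable union of proper closed subvarieties (``has a common eigenvector'', ``stabilises a pair of lines'', and ``has image of order $\le m$'' for $m\in\N$), each avoided by a free ping-pong representation into $\SL_2(\FF_p((t)))$. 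Finally every non-abelian subgroup of $\Sigma_2$ is a non-abelian free or surface group, hence not virtually solvable and so not contained in a proper closed subgroup of $\SL_2$; therefore the injective Zariski-dense $\rho_0$ is strongly dense.

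\textbf{The curve and conclusion.} Now I would argue as in Theorem~\ref{trans.deg.one}. Since $p>2$, Theorem~\ref{generating sets}(2) furnishes a very special generating set $\{H_1,\dots,H_s\}$ for $\SL_n$, say $H_i=\phi_i(\SL_2)$ with $\phi_i\colon\SL_2\to\SL_n$ defined over $\FF_p$. Each $\phi_i$ has central kernel, which the torsion-free group $\Sigma_2$ cannot meet, so $f_i:=\phi_i\circ\rho_0\colon\Sigma_2\to H_i(E)\subset\SL_n(E)$ is injective, and $f_i(\Delta)$ is Zariski-dense in $H_i$ for every non-abelian $\Delta\subset\Sigma_2$ (its preimage in $\SL_2$ is Zariski-dense). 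The $f_i$ give finitely many points of the irreducible variety $\Hom(\Sigma_2,\SL_n)$ (irreducible by the same Lang--Weil argument), rational over $E$ or a finite extension of $E$ of the same transcendence degree. By the blow-up and Bertini argument of Theorem~\ref{trans.deg.one} there is an irreducible affine curve through all the $f_i$, with coordinate ring $A$, whose fraction field $K_0$ has transcendence degree $9+1=10$ over $\FF_p$. The universal representation over this curve gives $\rho\colon\Sigma_2\to\SL_n(A)$ which specializes to each $f_i$, hence is injective; and for every non-abelian $\Delta\subset\Sigma_2$ the Zariski closure $H$ of $\rho(\Delta)$ over $K_0$ degenerates, at the point of the curve corresponding to $f_i$, to the closure $H_i$ of $f_i(\Delta)$, so $H_i\prec H$ for all $i$ by Theorem~\ref{degeneration}, whence $H=\SL_n$ since $\{H_i\}$ is generating. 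Thus $\rho$ is a strongly dense embedding $\Sigma_2\hookrightarrow\SL_n(K_0)$; as $K$ is algebraically closed of transcendence degree $10$ over $\FF_p$ it contains a copy of $K_0$, and together with the reduction to $g=2$ this proves the theorem.

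\textbf{Main obstacle.} The delicate step is the second one: producing, in characteristic $p$, a strongly dense embedding of a surface group into $\SL_2$ over a field of transcendence degree independent of $g$. The two devices that make it work are the passage to the finite-index overgroup $\Sigma_2$, and the use of residual freeness of $\Sigma_2$ together with the characteristic-$p$ embeddability of free groups in $\SL_2$ of a local field to force the generic point of the $9$-dimensional variety $\Hom(\Sigma_2,\SL_2)$ to be injective. One must also be careful that the characteristic-$p$ irreducibility of the relevant representation varieties is genuinely available in the quasi-simple (not merely simple) case, which is exactly the point flagged in the proof of the Baumslag-double theorem above.
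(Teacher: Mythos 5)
Your proposal is correct and follows essentially the same route as the paper: the generic (universal) representation over the $9$-dimensional irreducible variety $\Hom(\Sigma_2,\SL_2)$, injectivity via residual freeness plus embeddings of free groups in $\SL_2$ in characteristic $p$, strong density from the virtually solvable structure of proper closed subgroups of $\SL_2$, and then the very special generating set of Theorem~\ref{generating sets}(2) together with the curve/Bertini and degeneration argument of Theorem~\ref{trans.deg.one}, with the reduction to $g=2$ at the end. The only differences are expository: you spell out the genus reduction and the Bertini step, and you rederive irreducibility of the representation varieties via Lang--Weil counting where the paper cites Liebeck--Shalev directly.
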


\begin{proof}
It suffices to prove the theorem for $g=2$.  Let $\Gamma = \Sigma_2$, and let $A$ denote the coordinate ring of the affine variety $\Hom(\Gamma,\SL_2)$.  This variety is irreducible.  
The coordinate ring $A$ of this variety is of dimension $9$.   Let $L$ be the fraction field of $A$,
and let $\phi\colon \Gamma\to \SL_2(L)$ denote the composition of the universal $\SL_2$-representation of $\Gamma$ over $A$ with the embedding $\SL_2(A)\subset \SL_2(L)$.

For $1\neq \gamma\in \Gamma$, then there exists a homomorphism $\Gamma\to F_d$ sending $\gamma\mapsto \bar\gamma\neq 1$, and there exists a homomorphism $F_d\to \SL_2(K)$ which is injective.  Therefore, the image of $\gamma$ in $\SL_2(L)$ is not $1$, so $\phi$ is injective.

If $\Delta\subset \Gamma$ is a non-abelian subgroup, we would like to prove that $\phi(\Delta)$
is Zariski-dense in $\SL_2(L)$.  If $\Delta$ is a non-abelian subgroup of $\Gamma$ and $\Delta_1$ is a subgroup of finite index, then $\Delta_1$ is either a surface group or a non-abelian free group, so the commutator subgroup of its commutator subgroup is non-trivial.  If $\gamma$ is a non-trivial element in this group, then $\phi(\gamma)\neq 1$, so $\phi(\Delta_1)$ cannot be contained in a Borel subgroup of $\SL_2(\bar K)$.  By classification of closed subgroups of $\SL_2$, every proper subgroup has a finite index subgroup whose second commutator is trivial.  Therefore, $\phi(\Delta)$ is indeed Zariski-dense in $\SL_2$.

Now we proceed as in the proof of Theorem~\ref{surface char 0}. using part (2) of Theorem~\ref{generating sets} to show that $\SL_n$ has a very special generating set and then using the connectedness of $\Hom(\Gamma,\SL_n)$ in positive characteristic to deduce that for an algebraically closed field of transcendence degree $10$ over $\F_p$, there is a strongly dense subgroup isomorphic to $\Gamma$ and therefore a strongly dense subgroup isomorphic to $\Sigma_g$ for each $g\ge 2$.
\end{proof}

With a little more work, we can use the quotient of $\Hom(\Gamma,\SL_2)$ by the action of $\SL_2$ to reduce the transcendence degree to $7$.  In \cite{FLSS}, it is proved that for $p\ge 5$, there are faithful representations of $\Gamma$ in $\PGL_2$ over fields of transcendence degree $2$.  It seems possible that one could prove the same result for $\SL_2$ and use this to reduce transcendence degree to $3$.

\setcounter{tocdepth}{1}

\end{document}